\providecommand{\tabularnewline}{\\}
\newcommand{\R}{\mathbb{R}}
\newcommand{\Natu}{\mathbb{N}}
\newcommand{\diag}{\operatorname{diag}}
\newcommand{\dist}{\operatorname{dist}}
\spnewtheorem{experiment}{Experiment}{\it}{\rm}
\newcommand{\myendexp}{\rule{1cm}{0pt}}
\begin{document}
\title{Towards a reliable implementation of least-squares collocation for higher-index differential-algebraic
equations}
\titlerunning{Implementation of least-squares collocation}
\author{Michael Hanke \and Roswitha März}
\institute{Michael Hanke \at KTH Royal Institute of Technology, Department of Mathematics, S-10044 Stockholm, Sweden \\ \email{hanke@nada.kth.se}
\and Roswitha M\"arz \at Humboldt University of Berlin, Institute of Mathematics, D-10099 Berlin, Germany \\ \email{maerz@mathematik.hu-berlin.de}
}

\date{\today}

\maketitle
\begin{abstract}
In this note we discuss several questions concerning the implementation of overdetermined least-squares
collocation methods for higher-index differential algebraic equations (DAEs). Since higher-index DAEs lead to ill-posed problems in natural settings, the dicrete counterparts are expected to be very sensitive, what attaches particular importance to their implementation. 
We provide a robust selection of basis functions and collocation points 
 to design the discrete problem and substantiate a procedure for its numerical solution.
Additionally, a number of new error estimates are proven that support some of the design decisions.

\keywords{Least-squares collocation\and higher-index differential-algebraic equations\and ill-posed problem}
\subclass{65L80\and 65L08\and 65F20\and 34A99}
\end{abstract}

\section{Introduction}

An overdetermined least-squares collocation method for the solution
of boundary-value problems for higher-index differential-algebraic
equations (DAEs) has been introduced in \cite{HMTWW} and further
investigated in \cite{HMT,HM,HM1}. A couple of sufficient convergence conditions
have been established. Numerical experiments indicate an excellent
behavior. Moreover, it is particularly noteworthy that the computational effort is not much more expensive
than for standard collocation methods applied to boundary-value problems
for ordinary differential equations. However, the particular procedures are much more sensitive, which reflects the  ill-posedness of higher-index DAEs. The question of a reliable
implemention is almost completely open. The method offers a number of parameters and options
whose selection has not been backed up by any theoretical justifications.
The present paper is devoted to a first investigation of this topic. We focus on 
the choice of collocation nodes, the representation of the ansatz function as well as the shape and structure of the resulting discrete problem.
We apply various theoretical arguments, among them also new sufficient convergence conditions in Theorems \ref{t.C_precision}, \ref{t.1}, and \ref{t.2}, and report on corresponding systematic comprehensive numerical experiments.

The paper ist organized as follows:
Section \ref{s.Fundamentals}  contains the information concerning the problem to be solved as well as the basics on the overdetermined least-squares approach, and, additionally, the new error estimates.
Section \ref{s.Nodes,weights} deals with the selection and calculation of collocation points and integration weights for the different functionals of interest and Section \ref{s.Basis} provides a robust selection of  basis of the ansatz space. The resulting  discrete least-squares problem is treated in Section \ref{sec:The-discrete-least-squares},  a number of experiments is reported. The more detailed structur of the discrete problem is described in the Appendix. We conclude with Section \ref{s.Final}, which contains a summary and 
further comments.

The algorithms have been implemented in C++11. All computations have
been performed on a laptop running OpenSuSE Linux, release Leap 15.1,
the GNU g++ compiler (version 7.5.0) \cite{GCC}, the Eigen matrix
library (version 3.3.7) \cite{EigenLib}, SuiteSparse (version 5.6.0)
\cite{DavisSS}, in particular its sparse QR factorization \cite{SPQR},
Intel® MKL (version 2019.5-281), all in double precision with a rounding unit of $\epsilon_{\textrm{mach}}\approx 2.22\times10^{-16}$.\footnote{Intel is a registered trademark of Intel Corporation.}
The code is optimized using the level -O3.

\section{Fundamentals of the problem and method}\label{s.Fundamentals}
Consider a linear boundary-value problem for a DAE with properly involved derivative,
\begin{align}
A(t)(Dx)'(t)+B(t)x(t) & =q(t),\quad t\in[a,b],\label{eq:DAE}\\
G_{a}x(a)+G_{b}x(b) & =d.\label{eq:BC}
\end{align}
with $[a,b]\subset\R$ being a compact interval, $D=[I\;0]\in\R^{k\times m}$, $k<m$, 
with the identity matrix $I\in\R^{k\times k}$.
Furthermore, $A(t)\in\R^{m\times k}$, $B(t)\in\R^{m\times m}$, and $q(t)\in\R^{m}$  are assumed to be sufficiently smooth with respect to $t\in[a,b]$. Moreover,
$G_{a},G_{b}\in\R^{l\times m}$. Thereby, $l$ is the dynamical degree of freedom of the DAE, that is, the  number of free parameters which can be fixed by initial and boundary conditions. 
Unlike regular ordinary differential equations (ODEs) where $l=k=m$, for DAEs it holds that $0\leq l\leq k<m$, in particular, $l=k$ for index-one DAEs, $l<k$ for higher-index DAEs, and $l=0$ can certainly  happen.

Supposing accurately stated initial and boundary conditions, index-one DAEs yield well-posed problems in natural settings and can be numerically treated quite well similarly as ODEs \cite{LMW}.
In contrast, in the present paper, we are mainly interested in higher-index DAEs which lead to essentially ill-posed problems even if the boundary conditions are stated accurately \cite{CRR,LMW,HMT}. The tractability index and projector-based analysis serve as the basis for our investigations. We refer to \cite{CRR} for a detailed presentation and to \cite{LMW,Mae2014,HMT} for corresponding short sketches.

We assume that the DAE is regular with arbitrarily high index $\mu\in \Natu$ and the boundary conditions are stated accurately so that solutions of the problem (\ref{eq:DAE})-(\ref{eq:BC}) are unique. We also assume that a solution $x_{\ast}:[a,b]\rightarrow\R^{m}$ actually exists and is sufficiently smooth.
\medskip

For the construction of a regularization method to treat an essentially
ill-posed problem 
 a Hilbert space setting of the problem is
most convenient. For this reason, as in \cite{HMTWW,HMT,HM},  we apply 
the spaces
\begin{align*}
H_{D}^{1} & :=H_{D}^{1}((a,b),\R^{m})=\{x\in L^{2}((a,b),\R^{m}):Dx\in H^{1}((a,b),\R^{k})\},\\
L^{2} & :=L^{2}((a,b),\R^{m}),
\end{align*}
which are suitable for describing the underlying operators. In particular,
let
$
\mathcal{T}:H_{D}^{1}\rightarrow L^{2}\times\R^{l}
$
be given by
\begin{align}
(\mathcal{T}x)(t)= & \left[\begin{array}{c}
A(t)(Dx)'(t)+B(t)x(t)\\
G_{a}x(a)+G_{b}x(b)
\end{array}\right].\label{eq:T}
\end{align}
Then the boundary-value problem can be described by $\mathcal{T}x=(q,d)^{T}$.

 For $K>0$, let $\mathfrak{P}_{K}$
denote the set of all polynomials of degree less than or equal to
$K$.
Next, we define a finite dimensional subspace $X_{\pi}\subset H_{D}^{1}$ 
 of piecewise polynomial functions which should serve as ansatz space for the least-squares approximation: Let the partition $\pi$ be given by
\begin{equation}
\pi: \quad a=t_{0}<t_{1}<\cdots<t_{n}=b,\label{eq:mesh}
\end{equation}
with the stepsizes $h_{j}=t_{j}-t_{j-1}$,  $h=\max_{1\leq j\leq n}h_{j}$, and $h_{min}=\min_{1\leq j\leq n}h_{j}$.

Let $C_{\pi}([a,b],\R^{m})$ denote the space of piecewise continuous
functions having breakpoints merely at the meshpoints of the partition
$\pi$. Let $N\geq1$ be a fixed integer. Then, we define
\begin{align}
X_{\pi} & =\{x\in C_{\pi}([a,b],\R^{m}):Dx\in C([a,b],\R^{k}),\nonumber \\
 & x_{\kappa}\lvert_{[t_{j-1},t_{j})}\in\mathfrak{P}_{N},\,\kappa=1,\ldots,k,\quad x_{\kappa}\lvert_{[t_{j-1},t_{j})}\in\mathfrak{P}_{N-1},\,\kappa=k+1,\ldots,m,\;j=1,\ldots,n\}.\label{eq:Xn}
\end{align}
The continuous version of the least-squares method reads: Find an
$x_{\pi}\in X_{\pi}$ which minimizes the functional
\begin{equation}
\Phi(x)=\|\mathcal{T}x\|^{2}=\int_{a}^{b}|A(t)(Dx)'(t)+B(t)x(t)-q(t)|^{2}{\rm d}t
+|G_ax(a)+G_bx(b)
-d|^{2}.\label{eq:Phi}
\end{equation}
It is ensured by \cite[Theorem 4.1]{HMT} that, for all sufficiently fine partitions $\pi$  with  bounded ratios $1\leq h/h_{min}\leq \rho$, $\rho$ being a global constant, there exists a unique solution $x_\pi\in X_\pi$ and the inequality
\begin{equation}
\|x_{\pi}-x_{\ast}\|_{H_{D}^{1}}\leq Ch^{N-\mu+1}\label{eq:convLS}
\end{equation}
is valid. The constant $C\in \R$ depends on the solution $x_{*}$, the degree $N$, and the index $\mu$, but it is independent of $h$. If $N>\mu-1$ then (\ref{eq:convLS}) apparently  indicates convergence $x_{\pi}\xrightarrow{h\rightarrow 0}x_{*}$ in $H_{D}^{1}$. 

At this place it is important to mention that, so far, we are aware of only sufficient conditions of convergence and the error estimates may not be sharp. Not only more practical questions of implementation are open, but also several questions about the theoretical background. We are optimistic that much better estimates are possible since the results of  the numerical experiments have performed impressively better than theoretically expected till now.

The following theorem  can be understood as a specification of \cite[Theorem 4.1]{HMT} by a more detailed description of the ingredients of the constant C, in particular, now the role of $N$ is better clarified, which could well be important for the practical realization. In particular, it suggests that smooth problems could perhaps be solved better with large N and coarser partitions.
\begin{theorem}\label{t.C_precision}
Let the DAE (\ref{eq:DAE}) be regular with index $\mu\in \Natu$ and let the boundary condition (\ref{eq:BC}) be accurately  stated. Let $x_{*}$ be a solution of the boundary value problem (\ref{eq:DAE})--(\ref{eq:BC}), and let $A,B,q$ and also $x_{*}$ be sufficiently smooth.

Let $N\geq 1$ and all partitions $\pi$  be such that $h/h_{min}\leq \rho$, with a global constant $\rho$. 
Then, for all such partitions with sufficiently small $h$, the estimate \eqref{eq:convLS} is valid with
\[
 C=\frac{N!}{(2N)!\sqrt{2N+1}}C_{N}C_{*}\rho^{\mu-1} C_{data},
\]
where
\begin{align*}
 C_{\ast}&=
\max \{\|x_{\ast}^{(N)}\|_{\infty},\|x_{\ast}^{(N+1)}\|_{\infty}\}(m+4k(b-a)^{3})^{1/2},\\
C_{data}& \quad\text{is independent of $N$ and $h$, it depends only on the data }\; A,D,B, G_{a}, G_{b},
\end{align*}
and
$C_{N}$ is a rather involved function of $N$. In particular,
there is an integer $K$ with $N\leq K\leq 2(\mu-1)+N$ such that,
for $N\rightarrow\infty$, $C_{N}$ does not grow faster than $K^{2(\mu-1)}$. If $A$ and $B$ are constant, it holds $K=N$.
\end{theorem}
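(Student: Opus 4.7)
The plan is to follow the three-step template underlying \cite[Theorem 4.1]{HMT}, but to carry the dependence on $N$ explicitly through every estimate. In the first step, I construct a comparison element $\tilde x_\pi\in X_\pi$ by piecewise polynomial interpolation of $x_\ast$ on each subinterval $[t_{j-1},t_j]$, with Lobatto-type nodes for the $k$ components lying in $\mathfrak{P}_N$ (so that $D\tilde x_\pi$ stays continuous across breakpoints) and Gauss--Legendre nodes for the $m-k$ components lying in $\mathfrak{P}_{N-1}$. The remainder formula, together with the normalisation of the monic Legendre polynomial, $\prod_{i=1}^{N}(x-x_i)=\frac{2^N(N!)^2}{(2N)!}P_N(x)$, and the identity $\|P_N\|_{L^2(-1,1)}=\sqrt{2/(2N+1)}$ yield, after rescaling to a subinterval of length $h_j$,
\[
 \|f-\mathcal{I}f\|_{L^2(t_{j-1},t_j)}\leq \frac{N!}{(2N)!\sqrt{2N+1}}\,h_j^{N+1/2}\,\|f^{(N)}\|_{L^\infty(t_{j-1},t_j)} .
\]
Squaring, summing over the $n$ subintervals (which produces the length $b-a$), and adding the analogous $H^1$-contribution from the $k$ continuous components (where an additional factor $(b-a)^3$ appears from a Poincaré-type bookkeeping involving $\|x_\ast^{(N+1)}\|_\infty$) assembles the prefactor $\frac{N!}{(2N)!\sqrt{2N+1}}$ and the geometric content of $C_\ast$, in particular the constant $(m+4k(b-a)^3)^{1/2}$.

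In the second step, quasi-optimality of the least-squares minimiser gives $\|\mathcal{T}x_\pi-(q,d)\|\leq \|\mathcal{T}\tilde x_\pi-(q,d)\|=\|\mathcal{T}(\tilde x_\pi-x_\ast)\|$, since $\tilde x_\pi\in X_\pi$. Combined with the discrete stability estimate
\[
 \|z_\pi\|_{H_D^1}\leq C_{\mathrm{stab}}\,\|\mathcal{T}z_\pi\|,\qquad z_\pi\in X_\pi ,
\]
from \cite{HMT}, a triangle inequality yields $\|x_\pi-x_\ast\|_{H_D^1}\leq (1+C_{\mathrm{stab}}\|\mathcal{T}\|)\,\|\tilde x_\pi-x_\ast\|_{H_D^1}$. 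The task is thus reduced to opening up $C_{\mathrm{stab}}$ and isolating its dependence on $N$ and on the mesh ratio.

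In the third step, projector-based index reduction represents $\mathcal{T}^{-1}$, applied to a piecewise-polynomial datum, as $\mu-1$ rounds of differentiation and algebraic elimination against the matrix pencil determined by $A,D,B$. Each round, acting on a polynomial of current degree $\leq K$ on an interval of length $\geq h_{min}$, costs a Markov/Bernstein factor of order $K^2/h_{min}$; when $A,B$ are non-constant it also raises the current polynomial degree by at most $2$ per round (one from differentiating the polynomial, one absorbed from a coefficient derivative). After $\mu-1$ rounds the accumulated polynomial factor is thus bounded by $K^{2(\mu-1)}$ with $N\leq K\leq N+2(\mu-1)$, and the accumulated geometric factor is $h_{min}^{-(\mu-1)}=h^{-(\mu-1)}(h/h_{min})^{\mu-1}\leq h^{-(\mu-1)}\rho^{\mu-1}$. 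The factor $h^{-(\mu-1)}$ combines with the $h^{N}$ obtained in Step~1 to produce the announced rate $h^{N-\mu+1}$; the $K^{2(\mu-1)}$ part is gathered into $C_N$; and all remaining prefactors, which depend only on the data $A,D,B,G_a,G_b$ and on the (data-determined) projector norms, are absorbed into $C_{data}$. When $A$ and $B$ are constant no coefficient derivatives enter the reduction, so $K=N$ exactly.

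The main obstacle is Step~3: in \cite{HMT} the stability constant $C_{\mathrm{stab}}$ is treated as a black box, whereas here it must be dissected to exhibit a clean factorisation $C_{\mathrm{stab}}\propto C_N\rho^{\mu-1}C_{data}h^{-(\mu-1)}$ with no hidden $N$-growth inside the projector norms or inside the constants of the eliminations. The delicate bookkeeping is twofold: first, tracking how the effective polynomial degree of the reduced quantities evolves across the $\mu-1$ rounds so as to justify the bound $N\leq K\leq N+2(\mu-1)$ and the sharpening $K=N$ in the constant-coefficient case; second, verifying that the Markov-type factor $(K^2/h_{min})^{\mu-1}$ is indeed the only $N$-sensitive contribution, so that the clean split between $C_N$ and $C_{data}$ can be carried through.
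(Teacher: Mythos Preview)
Your three-step outline matches the paper's proof at the structural level: both use the decomposition $\|x_\pi - x_*\|_{H_D^1} \leq 2\|\mathcal{T}\|\,\alpha_\pi/\gamma_\pi$, then bound the approximation error $\alpha_\pi$ via interpolation and the instability threshold $\gamma_\pi$ by opening up the constants in the proof of \cite[Theorem~4.1]{HMT}. Your identification of the Markov-type factor $K^{2(\mu-1)}$ as the source of $C_N$ is correct; in the paper this appears as $C_N=\sqrt{c^*_{\mu-1}}$ with $c^*_{\mu-1} = 4^{\mu-1}\lambda_K\cdots\lambda_{K-\mu+2}$ and the eigenvalue bound $\lambda_S \leq \frac{4}{\pi^2}S^4 + O(S^2)$ from \cite[Corollary~A.3]{HMTWW}, which is precisely an $L^2$-Markov inequality of the kind you invoke.

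There is, however, a genuine issue in your Step~1. You propose to interpolate the first $k$ components of $x_*$ directly at $N+1$ Lobatto nodes to obtain degree-$N$ pieces and enforce continuity. But the prefactor $\frac{N!}{(2N)!\sqrt{2N+1}}$ in the theorem is exactly $\|\tilde\omega\|_{L^2(0,1)}$ for the monic degree-$N$ polynomial with Gauss--Legendre roots, and this is the minimiser over all $N$-point node sets; Lobatto interpolation with $N+1$ nodes does not produce this constant. More seriously, for the $H^1$-part you need $\|x'_{*,s} - p'_{*,s}\|_{L^2}$, and Lobatto interpolation of $x_{*,s}$ gives you no direct control on this that factors as stated. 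The paper instead interpolates the \emph{derivatives} $x'_{*,s}$, $s=1,\ldots,k$, at the $N$ Gauss--Legendre nodes (so $p'_{*,s}$ has degree $N-1$) and then integrates with $p_{*,s}(a)=x_{*,s}(a)$; this simultaneously enforces continuity across subintervals and delivers the optimal Legendre factor for both the $L^2$ and the $H^1$ contributions, with the $(b-a)^3$ arising from integrating the pointwise derivative error across $[a,b]$. Your Lobatto construction could be salvaged with a different, somewhat worse constant, but not with the one in the statement.

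A smaller slip in Step~3: differentiation \emph{lowers} polynomial degree, so your parenthetical ``one from differentiating the polynomial'' cannot be right. In the paper, $K$ is the degree of the auxiliary polynomial $q_{\mu-1} = \mathfrak{A}_{\mu-1}(Dp)' + \mathfrak{B}_{\mu-1}p$ for $p\in X_\pi$, and the bound $N\leq K\leq N+2(\mu-1)$ comes entirely from the degrees accumulated by the coefficient matrices $\mathfrak{A}_{\mu-1},\mathfrak{B}_{\mu-1}$ during the reduction; when $A,B$ are constant these remain constant and $K=N$.
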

At this place it should be mentioned that estimate \cite{Robbins55}
\begin{equation*}
\sqrt{2\pi N}\left(\frac{N}{e}\right)^{N} e^{1/(12N+1)}\leq N!\leq \sqrt{2\pi N}\left(\frac{N}{e}\right)^{N} e^{1/(12N)},
\end{equation*}
or its slightly less sharp version,
\begin{equation*}
 \sqrt{2\pi N}\left(\frac{N}{e}\right)^{N}\leq N!\leq \sqrt{2\pi N}\left(\frac{N}{e}\right)^{N} e^{1/12}
\end{equation*}
allow the growth estimate $\frac{N!}{(2N)!}\leq \sqrt{\pi N}e^{1/6} \frac{1}{N!} \frac{1}{4^{N}}$, thus
\begin{align}
 C&\leq \sqrt{\pi N} e^{1/6} \frac{1}{N!} \frac{1}{4^{N}\sqrt{2N+1}} C_{N}C_{*}\rho^{\mu-1} C_{data}
 \leq \sqrt{\frac{\pi}{2}}\, e^{1/6} \frac{1}{N!} \frac{1}{4^{N}} C_{N}C_{*}\rho^{\mu-1} C_{data} \nonumber\\
  &\leq \frac{1}{N!} \frac{1}{4^{N}} C_{N}C_{*} \rho^{\mu-1} \sqrt{\frac{\pi}{2}} e^{1/6} C_{data}.\label{C}
\end{align}
However, it should be considered that $C_{N}$ and $C_{*}$ also depend on $N$.
\begin{proof}
We apply the estimate \cite{HMT}
\[
\|x_{\pi}-x_{\ast}\|_{H_{D}^{1}}\leq\frac{\|\mathcal{T\|\alpha_{\pi}}}{\gamma_{\pi}}+\alpha_{\pi},
\]
in which the approximation error $\alpha_{\pi}$ and the instability threshold $\gamma_{\pi}$ are  given by
\[
\alpha_{\pi}=\inf_{x\in X_{\pi}}\|x-x_{\ast}\|_{H_{D}^{1}},\quad\gamma_{\pi}=\inf_{x\in X_{\pi},x\neq0}\frac{\|\mathcal{T}x\|}{\|x\|_{H_{D}^{1}}}.
\]
Owing to \cite[Theorem 4.1]{HMT}, there is a constant $c_{\gamma}>0$ independent of $\pi$ so that the instability threshold $\gamma_{\pi}$ satisfies the inequality
\[
c_{\gamma}h_{min}^{\mu-1}\leq \gamma_{\pi}\leq \lVert \mathcal T\rVert,
\]
for all partitions with sufficiently small $h$. This leads to
\[
\|x_{\pi}-x_{\ast}\|_{H_{D}^{1}}\leq \frac{\alpha_{\pi}}{\gamma_{\pi}}(\|\mathcal{T}\|+\gamma_{\pi})\leq 2\frac{\alpha_{\pi}}{\gamma_{\pi}} \|\mathcal{T}\|.
\]
Choosing $N$ interpolation points
$\rho_{i}$ with
\begin{align}
0&<\rho_{1}<\cdots<\rho_{N}<1,\label{eq:IntNodes}\\
\tilde{\omega}(\rho)&=(\rho-\rho_{1})\cdots(\rho-\rho_{N}),\nonumber
\end{align}
the approximation error can be estimated by straightforward but elaborate computations by constructing $p_{*}\in X_{\pi}$ such that $p_{*,s}'(t_{j-1}+\rho_{i}h_{j})=
x_{*,s}'(t_{j-1}+\rho_{i}h_{j})$, $p_{*,s}(a)=x_{*,s}(a)$, $s=1,\ldots,k$,
$p_{*,s}(t_{j-1}+\rho_{i}h_{j})=
x_{*,s}(t_{j-1}+\rho_{i}h_{j})$, $s=k+1,\ldots,m$, $i=1,\ldots,N$, $j=1,\ldots,n$, and regarding $\alpha_{\pi}\leq \|p_{*}-x_{\ast}\|_{H_{D}^{1}}$. One obtains
\begin{align}
 \alpha_{\pi}&\leq \frac{h^{N}}{N!} \lVert\tilde{\omega}\rVert_{L^{2}(0,1)} C_{\ast},\label{eq:alpha} \\
&\quad C_{\ast}=
\max \{\|x_{\ast}^{(N)}\|_{\infty},\|x_{\ast}^{(N+1)}\|_{\infty}\}(m+4k(b-a)^{3})^{1/2}.\nonumber
\end{align}
Turning to shifted Gauss-Legendre nodes that minimize $\lVert\tilde{\omega}\rVert_{L^{2}(0,1)}$ we obtain
\[
 \lVert\tilde{\omega}\rVert_{L^{2}(0,1)}=\frac{(N!)^{2}}{(2N)!\sqrt{2N+1}}.
\]
To verify this, we consider the polynomial 
\begin{align*}
 \omega(t)=2^{N}\tilde\omega\bigg(\frac{t+1}{2}\bigg)=(t-t_{1})\cdots(t-t_{N})
\end{align*}
with zeros $t_{j}=2\rho_{j}-1$, $j=1,\ldots,N$, which is nothing else but the standard Legendre polynomial with leading coefficient one.  Using the Rodrigues formula and other arguments from \cite[Section 5.4]{HaemHoff91}, one obtains
\[
 \lVert\omega\rVert_{L^{2}(-1,1)}= 2^{N+\frac{1}{2}}\frac{(N!)^{2}}{(2N)!\sqrt{2N+1}}.
\]
Finally, shifting back to the interval $(0,1)$ leads to $\lVert\tilde{\omega}\rVert_{L^{2}(0,1)}=2^{-(N+\frac{1}{2})}\lVert\omega\rVert_{L^{2}(-1,1)}$.

Thus we have 
\begin{equation}
 \alpha_{\pi}\leq \frac{h^{N}}{N!} \frac{(N!)^{2}}{(2N)!\sqrt{2N+1}}C_{\ast}= h^{N} \frac{N!}{(2N)!\sqrt{2N+1}}C_{\ast}.\label{eq:alpha1}
\end{equation}
Next, a careful review of the proof of \cite[Theorem 4.1 (a)]{HMT} results in the representation (in terms of \cite{HMT}) 
\begin{align*}
\frac{1}{c_{\gamma}}&= 12 c_{Y}\sqrt{g_{\mu-1}}=12 c_{Y}\sqrt{d_{1,\mu-1}c^{*}_{\mu-1}\lVert D\mathcal L_{\mu-1}\rVert^{2}_{\infty}}\\
&= 
12 c_{Y}\sqrt{2}\lVert D\Pi_{0}Q_{1}\cdots Q_{\mu-1}D^{+}\rVert_{\infty}\lVert D\mathcal L_{\mu-1}\rVert_{\infty}\sqrt{c^{*}_{\mu-1}}.
\end{align*}
The factors $\lVert D\Pi_{0}Q_{1}\cdots Q_{\mu-1}D^{+}\rVert_{\infty}$ and $\lVert D\mathcal L_{\mu-1}\rVert_{\infty}$ depend only on the data $A,D,B$, likewise the bound $c_{Y}$ introduced in \cite[Proposition 4.3]{HMT}.

In contrast,
the term $c^{*}_{\mu-1}$ depends additionally on $N$ besides the problem data. Let $K$ denote the degree
of the auxiliary polynomial $q_{\mu-1}=\mathfrak A_{\mu-1}(Dp)'+\mathfrak B_{\mu-1}p,\; p\in X_{\pi}$ in the proof of \cite[Theorem 4.1]{HMT}. Then we have $N\leq K\leq N+2(\mu-1)$ and, by \cite[Lemma 4.2]{HMT},
 $c_{\mu-1}^{*}=4^{\mu-1}\lambda_{K}\cdots\lambda_{K-\mu+2}$, where each $\lambda_{S}>0$ is the maximal eigenvalue of a certain symmetric, positive semidefinite matrix of size $(S+1)\times(S+1)$ \cite[Lemma 3.3]{HMTWW}.

Owing to \cite[Corollary A.3]{HMTWW} it holds that $\lambda_{S}\leq\frac{4}{\pi^{2}}S^{4}+O(S^{2})$ for large $S$, and therefore 
\begin{align*}
 c_{\mu-1}^{*}&=4^{\mu-1}\lambda_{K}\cdots\lambda_{K-\mu+2} \\
 &\leq 4^{\mu-1} (\frac{4}{\pi^{2}})^{\mu-1}K^{4}(K-1)^{4}\cdots(K-\mu+2)^{4}+O(K^{4(\mu-1)-1})\\
 &=4^{\mu-1} (\frac{4}{\pi^{2}})^{\mu-1}K^{4(\mu-1)}+O(K^{4(\mu-1)-1})\\
 &\leq 4^{\mu-1} (\frac{4}{\pi^{2}})^{\mu-1}(N+2(\mu-1))^{4(\mu-1)}
  +O((N+2(\mu-1))^{4(\mu-1)-1}).
\end{align*}
Finally, letting
\begin{align*}
 C_{data}= 2\|\mathcal{T}\|12 c_{Y}\sqrt{2} \lVert D\Pi_{0}Q_{1}\cdots Q_{\mu-1}D^{+}\rVert_{\infty}\lVert D\mathcal L_{\mu-1}\rVert_{\infty},\quad
 C_{N}=\sqrt{c^{*}_{\mu-1}},
\end{align*}
we are done.
 \qed
\end{proof}
Observe that, for smooth problems, any fixed sufficiently fine partition $\pi$, and $N\rightarrow\infty$, the growth rate of   the error $\lVert x_{\pi}-x_{*}\rVert_{H^{1}_{D}}$ is not greater than that of
\begin{align}\label{Ngrowth}
 C_{*}h^{N}\frac{(N+2(\mu-1))^{2(\mu-1)}}{4^{N} N!}
 =C_{*}\left(\frac{h}{4}\right)^{N}\frac{(N+2(\mu-1))^{2(\mu-1)}}{ N!}
\end{align}
and, for constant matrix function $A$ and $B$,
\begin{align}\label{NgrowthC}
 C_{*}h^{N}\frac{N^{2(\mu-1)}}{4^{N} N!}
 =C_{*}\left(\frac{h}{4}\right)^{N}\frac{N^{2(\mu-1)}}{  N!}.
\end{align}
Remember that $C_{*}$ is a function of $N$.
\begin{remark}\label{r.Jordan}
The specific error estimation provided in \cite{HMTWW} for the case of DAEs in Jordan chain form on equidistant grids 
may provide some further insight in the behavior of the instability threshold $\gamma_{\pi}$. It is shown that
\[
\gamma_{\pi}\geq\bar{C}_{\mu}\left(\frac{h}{\sqrt{\lambda_{N}}}\right)^{\mu-1}
\]
holds true for sufficiently small $h$ where $\bar{C}_{\mu}$ is a
moderate constant depending only on $\mu$ \cite[Theorem 3.6]{HMTWW}.
This leads to the dominant error term 
\begin{align*}
 \frac{\alpha_{\pi}}{\gamma_{\pi}}\leq \frac{C_{\ast}}{\bar C_{\mu}} \sqrt{\frac{\pi}{2}}e^{1/6}\frac{1}{2^{2N}}\frac{\lambda_{N}^{\frac{\mu-1}{2}}}{N!} h^{N-\mu+1}=\frac{1}{\bar C_{\mu}} \sqrt{\frac{\pi}{2}}e^{1/6}\frac{1}{h^{\mu-1}} C_{\ast} \left(\frac{h}{4}\right)^{N}\frac{\lambda_{N}^{\frac{\mu-1}{2}}}{N!},
\end{align*}
indicating again that, for smooth problems it seems reasonable to calculate with larger N and coarse partitions.
Moreover, for sufficiently small $\frac{h}{\sqrt{\lambda_{N}}}$, the estimation  $\lambda_{N}\leq\frac{4}{\pi^{2}}N^{4}+O(N^{2})$ becomes valid 
\cite[Remark 3.4]{HMTWW}, and hence the growth characteristic \eqref{NgrowthC} for large $N$ is confirmed once more.
\qed
\end{remark}
\bigskip

The functional values $\Phi(x)$, which are needed when minimizing for $x\in X_{\pi}$, cannot be evaluated exactly and the integral 
must be discretized accordingly. Taking into account that the boundary-value
problem is ill-posed in the higher index case $\mu>1$, perturbations
of the functional may have a serious influence on the error of the
approximate least-squares solution or even prevent convergence towards
the solution $x_{\ast}$. Therefore, careful approximations of the integral in $\Phi$
are required. We discuss the following three options:
\begin{equation}
\Phi_{\pi,M}^{C}(x)=\sum_{j=1}^{n}\frac{h_{j}}{M}\sum_{i=1}^{M}
|A(t_{ji})(Dx)'(t_{ji})+B(t_{ji})x(t_{ji})-q(t_{ji})|^{2}
+|G_ax(a)+G_bx(b)-d|^{2},\label{eq:PhiC}
\end{equation}
\begin{equation}
\Phi_{\pi,M}^{I}(x)=\sum_{j=1}^{n}h_{j}\sum_{i=1}^{M}\gamma_{i}
|A(t_{ji})(Dx)'(t_{ji})+B(t_{ji})x(t_{ji})-q(t_{ji})|^{2}
+|G_ax(a)+G_bx(b)-d|^{2},\label{eq:PhiI}
\end{equation}
and 
\begin{multline}
\Phi_{\pi,M}^{R}(x)=\sum_{j=1}^{n}\int_{t_{j-1}}^{t_{j}}
\left|\sum_{i=1}^{M}l_{ji}(t)
(A(t_{ji})(Dx)'(t_{ji})+B(t_{ji})x(t_{ji})-q(t_{ji}))\right|^{2}{\rm d}t \\
+|G_ax(a)+G_bx(b)-d|^{2},
\label{eq:PhiR}
\end{multline}
in  which from the DAE (\ref{eq:DAE}) and $x\in X_{\pi}$ only data at the points
\[
t_{ji}=t_{j-1}+\tau_{i}h_{j},\quad i=1,\ldots,M,\;j=1,\ldots,n,
\]
are included, with 
\begin{equation}
0\leq\tau_{1}<\cdots<\tau_{M}\leq 1.\label{eq:nodes}
\end{equation}
In the last functional $\Phi_{\pi,M}^{R}$ Lagrange basis polynomials appear, i.e.,
\begin{equation}
l_{ji}(t)=\frac{\prod_{\substack{\kappa=1\\
\kappa\neq i
}
}^{M}(t-t_{j\kappa})}{\prod_{\substack{\kappa=1\\
\kappa\neq i
}
}^{M}(t_{ji}-t_{j\kappa})}
=\frac{\prod_{\substack{\kappa=1\\
\kappa\neq i
}
}^{M}(\tau-\tau_{\kappa})}{\prod_{\substack{\kappa=1\\
\kappa\neq i
}
}^{M}(\tau_{i}-\tau_{\kappa})}=:l_{i}(\tau),\quad \tau=(t-t_{j-1})/h_{j}.\label{eq:elfa}
\end{equation}
\begin{remark}\label{r.comp}
The direct numerical implementation of $\Phi^{R}_{\pi,M}(x)$ with the Lagrangian
basis functions includes the use of the mass matrix belonging to such
functions. It is well known that this matrix may be very bad conditioned
thus leading to an amplification of rounding errors. In connection
with the ill-posedness of higher-index DAEs, this may render the numerical
solutions useless.
The solution of
the least-squares problem with $\Phi_{\pi,M}^{I}$ is much less expensive
than that with $\Phi^{R}_{\pi,M}$, and in turn,
solving system (\ref{eq:coll})-(\ref{eq:BCcoll}) below for $x\in X_{\pi}$  in a least-squares sense
using the (diagonally weighted) Euclidean norm in $\R^{nMm+l}$ according to $\Phi_{\pi,M}^{C}$ is
even less computationally expensive than using $\Phi^{I}_{\pi,M}(x)$. \qed
\end{remark}
Introducing, for each $x\in X_{\pi}$ and $w(t)=A(t)(Dx)'(t)+B(t)x(t)-q(t)$,
the corresponding vector $W\in\R^{mMn}$ by
\begin{equation}
W=\left[\begin{array}{c}
W_{1}\\
\vdots\\
W_{n}
\end{array}\right]\in\R^{mMn},\quad W_{j}=h_{j}^{1/2}\left[\begin{array}{c}
w(t_{j1})\\
\vdots\\
w(t_{jM})
\end{array}\right]\in\R^{mM},\label{eq:W}
\end{equation}
we obtain new representations of these functionals, namely
\[
\Phi_{\pi,M}^{C}(x)=W^{T}\mathcal{L}^{C}W+|G_ax(a)+G_bx(b)-d|^{2},
\]
\[
\Phi_{\pi,M}^{I}(x)=W^{T}\mathcal{L}^{I}W+|G_ax(a)+G_bx(b)-d|^{2},
\]
and
\[
\Phi_{\pi,M}^{R}(x)=W^{T}\mathcal{L}^{R}W+|G_ax(a)+G_bx(b)-d|^{2},
\]
whereby the first two formulae are evident, with $\mathcal{L}^{C}=\diag(L^{C}\otimes I_{m},\ldots,L^{C}\otimes I_{m})$, $\otimes$ denoting the Kronecker product,
and $L^{C}=M^{-1}I_{M}$ such that finally $\mathcal{L}^{C}=M^{-1}I_{nMm}$, and further, $\mathcal{L}^{I}=\diag(L^{I}\otimes I_{m},\ldots,L^{I}\otimes I_{m})$
and $L^{I}=\diag(\gamma_{1},\ldots,\gamma_{M})$. 
$L^{C}$ and thus $\mathcal{L}^{C}$ are positive definite. The matrices $L^{I}$
and $\mathcal{L}^{I}$ are positive definite if and only if all quadrature
weights are positive.

The formula for $\Phi_{\pi,M}^{R}(x)$ can be established by straightforward evaluations following the lines\footnote{\cite[Section 2.3]{HMTWW} is restricted to equidistant partitions $\pi$ and collocation points $0<\tau_{1}<\cdots<\tau_{M}<1$. The generalization works without further ado.} of \cite[Section 2.3]{HMTWW}, in which $\mathcal{L}^{R}=\diag(L^{R}\otimes I_{m},\ldots,L^{R}\otimes I_{m})$,
 $L^{R}$ is the mass matrix
associated with the Lagrange basis functions $l_{i}$, $i=1,\ldots,M$,
(\ref{eq:elfa}) for the node sequence (\ref{eq:nodes}), more precisely,
\begin{equation}
L^{R}=(L_{i\kappa}^{R})_{i,\kappa=1,\ldots,M},\quad L_{i\kappa}^{R}=\int_{0}^{1}l_{i}(\tau)l_{\kappa}(\tau)d\tau.\label{eq:massmatrix}
\end{equation}
$L^{R}$ is symmetric and positive definite and, consequently,
$\mathcal{L}^{R}$ is so. 

We emphasize that the matrices $L^{C}, L^{I}, L^{R}$ depend only on $M$, the node
sequence (\ref{eq:nodes}), and the quadrature weights, but do not depend on the partition $\pi$ and $h$ at all.
\medskip

We set always $M\geq N+1$. Although the nodes (\ref{eq:nodes}) serve as interpolation points in the functional $\Phi^{R}_{\pi,M}$, we still call them \textit{collocation nodes} after \cite{HMTWW}. 
It should be underlined here that minimizing  each of the above functionals on $X_{\pi}$  can be viewed as a special least-squares method to solve the \textit{overdetermined collocation system} $W=0$, $G_ax(a)+G_bx(b))=d$, with respect to $x\in X_{\pi}$, that is in detail,
 the collocation system
\begin{align}
A(t_{ji})(Dx)'(t_{ji})+B(t_{ji})x(t_{ji}) & =q(t_{ji}),\quad j=1,\ldots,M,\quad i=1,\ldots,n,\label{eq:coll}\\
G_ax(a)+G_bx(b)) & =d.\label{eq:BCcoll}
\end{align}
The system (\ref{eq:coll})-(\ref{eq:BCcoll}) for $x\in X_{\pi}$
becomes overdetermined since  $X_{\pi}$ has dimension $m n N+k$, whereas the system consists of $mnM+l\geq mnN+mn+l\geq nmN+m+l>nmN+k+l\geq mnN+k$ scalar equations.\footnote{If the DAE (\ref{eq:DAE}) has index $\mu=1$, then $l=k$, and hence also the choice $M=N$ makes sense. Then the system (\ref{eq:coll})-(\ref{eq:BCcoll}) for $x\in X_{\pi}$ is nothing else but the classical collocation approach, and  the least squares solution becomes the exact solution
of the collocation system. We refer to \cite{LMW} for a detailed survey of classical collocation methods, however, here we mainly focus on higher index cases yielding overdetermined systems.}
\begin{remark}\label{r.PHI_C}
Based on collocation methods for index-1 DAEs, the first thought in \cite{HMTWW,HMT} was to turn to
the functional $\Phi^{C}_{\pi,M}$ with nodes $0<\tau_{1}<\cdots<\tau_{M}<1$. However, the use of the special discretized norm in these papers for providing convergence results is in essence already the use of the functional  $\Phi^{R}_{\pi,M}$.
 
 For a general
set of nodes (\ref{eq:nodes}), $\Phi^{C}_{\pi,M}$ represents a simple Riemann approximation of the corresponding integral, which has first
order of accuracy, only. If, however, the nodes are chosen as those
of the Chebyshev intergration, the orders $1,\dots,7$ and $9$ can
be obtained for the corresponding number $M$ of nodes \cite[p 349]{Hildebrand56}.
The marking with the upper index $C$ indicates now that Chebyshev integration formulas are conceivable.
As developed in \cite[Section 7.5.2]{HaemHoff91}, integration formulas with uniform weights, i.e., Chebyshev formulas, are those where random errors in the function values have the least effect on the quadrature result. This makes these formulas very interesting in our context. However,
although a lot of test calculations runs well, we are not aware of convergence statements going along  with $\Phi^{C}_{\pi,M}$ so far. \qed
\end{remark}
\begin{remark}\label{r.PHI_R}
The functional $\Phi^{R}_{\pi,M}$ gets its upper index $R$ from the restriction operator  $R_{\pi,M}$ introduced in \cite{HM} with nodes $0<\tau_{1}<\cdots<\tau_{M}<1$. Note that \cite[Theorem 2.3]{HM} generalizes convergence results from \cite{HMTWW,HMT} to a large extend. Theorem \ref{t.1} below allows even any nodes with (\ref{eq:nodes}). \qed
\end{remark}
\begin{remark}\label{r.PHI_I}
The functional $\Phi^{I}_{\pi,M}$ has its upper index $I$ simply from the word integration formula.
We will see first convergence results going along with  $\Phi^{I}_{\pi,M}$ in Theorem \ref{t.1} below.

Intuitively, it seems reasonable to use a Gaussian quadrature
rule for these purposes. However, it is not known if such a rule is
most robust against rounding errors and/or other choices of the overall
process. \qed 
\end{remark}

\begin{remark}\label{r.Unstetigkeit}
Our approximations are according to the basic ansatz space $X_{\pi}$ discontinuous, with possible jumps at the grid points in certain components. In this respect it does not matter which of our functionals is selected. Since we always have overdetermined systems \eqref{eq:coll}-\eqref{eq:BCcoll}, it can no longer be expected that all components of the approximation are continuous even for the case $\tau_{1}=0, \tau_{M}=1$. This is an important difference to the classical collocation methods for Index-1 DAEs, which base on classical uniquely solvable linear systems, e.g.,\cite{LMW}. 
\end{remark}

\begin{theorem}\label{t.1}
Let the DAE (\ref{eq:DAE}) be regular with index $\mu\in \Natu$ and let the boundary condition (\ref{eq:BC}) be accurately  stated. Let $x_{*}$ be a solution of the boundary value problem (\ref{eq:DAE})--(\ref{eq:BC}), and let $A,B,q$ and also $x_{*}$ be sufficiently smooth.

Let all partitions $\pi$  be such that $h/h_{min}\leq \rho$, with a global constant $\rho$.
Then, with 
\[
 M\geq N+\mu,
\]
the following statements are true:
\begin{description}
 \item[\rm (1)] For sufficient fine partitions $\pi$ and each sequence of arbitrarily placed nodes (\ref{eq:nodes}), there exists exactly one $x_{\pi}^{R}\in X_{\pi}$ minimizing the functional $\Phi^{R}_{\pi,M}$ on $X_{\pi}$, and
 \begin{align*}
 \|x_{\pi}^{R}-x_{\ast}\|_{H_{D}^{1}}\leq C_{R}h^{N-\mu+1}. 
 \end{align*}
 \item[\rm (2)] For each integration rule related to the interval $[0,1]$, with $M$ nodes (\ref{eq:nodes}) and positive weights $\gamma_{1},\ldots,\gamma_{M}$, which is exact for polynomials with degree less than or equal to $2M-2$, and sufficient fine partitions $\pi$, there exists exactly one $x_{\pi}^{I}\in X_{\pi}$ minimizing the functional $\Phi^{I}_{\pi,M}$ on $X_{\pi}$, and $x_{\pi}^{I}=x_{\pi}^{R}$, thus
 \begin{align*}
 \|x_{\pi}^{I}-x_{\ast}\|_{H_{D}^{1}}\leq C_{R}h^{N-\mu+1}. 
 \end{align*}
\end{description}
\end{theorem}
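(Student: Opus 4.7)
My plan is to dispose of part (2) first by an algebraic identity and then attack part (1) by adapting the framework behind Theorem \ref{t.C_precision}. For part (2), I would compare the block-diagonal representations $\Phi^R_{\pi,M}(x) = W^T \mathcal{L}^R W + |G_ax(a)+G_bx(b)-d|^2$ and $\Phi^I_{\pi,M}(x) = W^T \mathcal{L}^I W + |G_ax(a)+G_bx(b)-d|^2$ already derived in the fundamentals section. The only difference sits in the $M\times M$ blocks. Each product $l_i(\tau) l_k(\tau)$ has degree at most $2M-2$, so a quadrature rule exact on $\mathfrak{P}_{2M-2}$ with positive weights gives
\[
L^R_{ik}=\int_0^1 l_i(\tau) l_k(\tau)\,\mathrm{d}\tau=\sum_{s=1}^{M}\gamma_s\, l_i(\tau_s)l_k(\tau_s)=\gamma_i\,\delta_{ik},
\]
by the defining property of the Lagrange basis. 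Hence $L^R=L^I$, so $\Phi^R_{\pi,M}\equiv\Phi^I_{\pi,M}$ on $X_\pi$; the minimizer is the same and the estimate transfers directly from part (1). Positivity of the weights is needed only to guarantee that $L^I$ is positive definite and the minimum is unique.

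For part (1), I would follow the pattern of \cite[Theorem 2.3]{HM}, but admit the arbitrary node placement (\ref{eq:nodes}). Let $R_{\pi,M}$ denote piecewise Lagrange interpolation onto polynomials of degree at most $M-1$ at the nodes $t_{ji}$, and set $\mathcal{T}_R x=(R_{\pi,M}(A(Dx)'+Bx),\,G_ax(a)+G_bx(b))$, so that $\Phi^R_{\pi,M}(x)=\|\mathcal{T}_R x-(R_{\pi,M}q,d)\|^2$. An abstract perturbation bound in the style of the proof of Theorem \ref{t.C_precision} then gives
\[
\|x_\pi^R-x_\ast\|_{H^1_D}\leq \alpha_\pi+\frac{\|\mathcal{T}_R\|\alpha_\pi+\|(\mathcal{T}-\mathcal{T}_R)x_\ast\|}{\gamma_\pi^R},\qquad \gamma_\pi^R=\inf_{x\in X_\pi\setminus\{0\}}\frac{\|\mathcal{T}_R x\|}{\|x\|_{H^1_D}}.
\]
The same $p_\ast$ constructed in the proof of Theorem \ref{t.C_precision} yields $\alpha_\pi=O(h^N)$, and standard Lagrange interpolation estimates produce $\|(\mathcal{T}-\mathcal{T}_R)x_\ast\|=O(h^N)$ under the smoothness hypothesis on $x_\ast$, provided $M\geq N+1$, which is implied by $M\geq N+\mu$.

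The main obstacle, and the reason for the hypothesis $M\geq N+\mu$, is the node-uniform lower bound $\gamma_\pi^R\geq c_\gamma^R\, h^{\mu-1}$. The plan is to follow the Jordan-chain reduction used for \cite[Theorem 4.1]{HMT} together with \cite[Lemma 3.3]{HMTWW} and \cite[Lemma 4.2]{HMT}: for $p\in X_\pi$, one reduces $\|\mathcal{T}_R p\|$ to the $L^2$-norm of the piecewise Lagrange interpolant of the auxiliary polynomial $\mathfrak{A}_{\mu-1}(Dp)'+\mathfrak{B}_{\mu-1}p$. The hypothesis $M\geq N+\mu$ should be exactly what is needed to make this interpolation injective on the relevant polynomial subspace, so that its $L^2$-norm controls the $L^2$-norm of the original polynomial up to a node-independent constant. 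Once this coercivity is secured, existence, uniqueness, and the rate $h^{N-\mu+1}$ follow immediately from the perturbation bound above, with a constant $C_R$ absorbing $\|\mathcal{T}\|$, $c_\gamma^R$, and the interpolation constants, but independent of the particular node placement.
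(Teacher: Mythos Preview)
Your proposal is correct and follows essentially the same route as the paper. For part (2), both you and the paper show that $\Phi^R_{\pi,M}$ and $\Phi^I_{\pi,M}$ coincide, but at different levels: the paper observes that each $\theta_j(t)=\bigl|\sum_i l_{ji}(t)(\cdots)\bigr|^2$ is a polynomial of degree at most $2M-2$ and applies the quadrature directly to the integral in $\Phi^R_{\pi,M}$, whereas you work one level lower and show $L^R=L^I$ entrywise via $\int_0^1 l_i l_k=\sum_s\gamma_s\delta_{is}\delta_{ks}=\gamma_i\delta_{ik}$. These are two presentations of the same identity; yours has the pleasant side effect that positivity of the weights is seen to be automatic, since $L^R$ is a Gram matrix of linearly independent polynomials and hence positive definite.

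For part (1), the paper simply invokes \cite[Theorem 2.3]{HM} and notes that the argument there survives the extension to $\tau_1=0$ and/or $\tau_M=1$. Your sketch unpacks the structure of that argument (perturbation bound, $\alpha_\pi=O(h^N)$, $\|(\mathcal T-\mathcal T_R)x_\ast\|=O(h^N)$, and the crucial lower bound $\gamma_\pi^R\gtrsim h^{\mu-1}$), which is more informative than the paper's citation but is ultimately the same strategy. Your explanation of why $M\ge N+\mu$ is the right threshold remains heuristic (``should be exactly what is needed\ldots''); to make it rigorous you would have to trace through the degree bookkeeping in \cite[Proposition~2.2]{HM}, but the paper does not do this either.
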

 Since Gauss-Legendre and Gauss-Radau integration rules are exact for polynomials up to degree $2M-1$ and $2M-2$, respectively, with positive weights, they are well suitable here, but Gauss-Lobatto rules do not meet the requirement of Theorem~\ref{t.1}(2).
\begin{proof}
(1): In \cite{HM}, additionally supposing $0<\tau<\cdots<\tau_{M}<1$, conditions are derived that ensure the existence and uniqueness of
 $x_{\pi}^{R}$ minimizing $\Phi^{R}_{\pi,M}$ on $X_{\pi}$. It is shown that $x_{\pi}^{R}$ has similar convergence
properties as  $x_{\pi}$ minimizing $\Phi$ on $X_{\pi}$. Merely the constant $C_{R}$ is slightly  larger than $C$ in (\ref{eq:convLS}).
A further careful check of the proofs in \cite{HM} shows that the assertion holds
also true for $\tau_{1}=0$ and/or $\tau_{M}=1$, possibly with a larger constant $C_{R}$.

(2): For each arbitrary $x\in X_{\pi}$, the expression
\begin{align*}
 \theta_{j}(t):=\left|\sum_{i=1}^{M}l_{ji}(t)(A(t_{ji})(Dx)'(t_{ji})+B(t_{ji}x(t_{ji})-q(t_{ji}))\right|^{2},\quad t\in (t_{j-1},t_{j}),
\end{align*}
shows that $\theta_{j}$ is a polynomial with degree less than or equal to $2M-2$, thus
\begin{align*}
 \int_{t_{j-1}}^{t_{j}}\theta_{j}(t){\rm dt}=h_{j}\sum_{i=1}^{M} \gamma_{i}\theta_{j}(t_{ji})=
 h_{j}\sum_{i=1}^{M}\gamma_{i}\left|A(t_{ji})(Dx)'(t_{ji})+B(t_{ji}x(t_{ji})-q(t_{ji})\right|^{2}
\end{align*}
Therefore, it follows that $\Phi_{\pi,M}^{I}(x)=\Phi_{\pi,M}^{R}(x)$, for all $x\in X_{\pi}$, and $\Phi_{\pi,M}^{I}$ coincides with the special functional $\Phi_{\pi,M}^{I}$ having the same nodes. Eventually, (2) is a particular case of (1). \qed
\end{proof}
As already emphasized above, until now we are aware of only sufficient convergence conditions, which is, in particular, especially applicable for the size of $M$. So far, often the applications run well with $M=N+1$ and no significant difference to calculations with a larger M was visible, e.g., \cite[Section 6]{HMT} and \cite[Section 4]{HMTWW}. Also the experiments in Section \ref{s.Basis} below are carried out with $M=N+1$. The following statement for $A$ and $B$ with polynomial entries allows to choose $M$ independently of the index $\mu$ and confirms  the choice $M=N+1$ for constant $A$ and $B$.

\begin{theorem}\label{t.2}
Let the DAE (\ref{eq:DAE}) be regular with index $\mu\in \Natu$ and let the boundary condition (\ref{eq:BC}) be accurately  stated. Let $x_{*}$ be a solution of the boundary value problem (\ref{eq:DAE})--(\ref{eq:BC}), and let $q$ and also $x_{*}$ be sufficiently smooth.
Let the entries of $A$ and $B$ be polynomials with degree less than or equal to $N_{AB}$.
Let all partitions $\pi$  be such that $h/h_{min}\leq \rho$, with a global constant $\rho$.
Then, with 
\[
 M\geq N+1+N_{AB},
\]
the following statements are true:
\begin{description}
 \item[\rm (1)] For sufficient fine partitions $\pi$ and each sequence of arbitrarily placed nodes (\ref{eq:nodes}), there exists exactly one $x_{\pi}^{R}\in X_{\pi}$ minimizing the functional $\Phi^{R}_{\pi,M}$ on $X_{\pi}$, and
 \begin{align*}
 \|x_{\pi}^{R}-x_{\ast}\|_{H_{D}^{1}}\leq C_{R}h^{N-\mu+1}. 
 \end{align*}
 \item[\rm (2)] For each integration rule of interpolation type related to the interval $[0,1]$, with $M$ nodes (\ref{eq:nodes}) and positive weights $\gamma_{1},\ldots,\gamma_{M}$, which is exact for polynomials with degree less than or equal to $2M-2$, and sufficient fine partitions $\pi$, there exists exactly one $x_{\pi}^{I}\in X_{\pi}$ minimizing the functional $\Phi^{I}_{\pi,M}$ on $X_{\pi}$, and $x_{\pi}^{I}=x_{\pi}^{R}$, thus
 \begin{align*}
 \|x_{\pi}^{I}-x_{\ast}\|_{H_{D}^{1}}\leq C_{R}h^{N-\mu+1}. 
 \end{align*}
 \item[\rm (3)] If $A$ and $B$ are even constant matrices, for sufficient fine partitions $\pi$ and each sequence of arbitrarily placed nodes (\ref{eq:nodes}), there exists exactly one $x_{\pi}^{R}\in X_{\pi}$ minimizing the functional $\Phi^{R}_{\pi,M}$ on $X_{\pi}$, and
 \begin{align*}
 \|x_{\pi}^{R}-x_{\ast}\|_{H_{D}^{1}}\leq C_{R}h^{\max\{0,N-\mu+1\}}. 
 \end{align*}
\end{description}
\end{theorem}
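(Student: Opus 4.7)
The structural observation driving the theorem is that, when $A$ and $B$ have polynomial entries of degree at most $N_{AB}$, the residual $t\mapsto A(t)(Dx)'(t)+B(t)x(t)$ is on each subinterval $[t_{j-1},t_j)$ a vector-valued polynomial of degree at most $N+N_{AB}$ for every $x\in X_{\pi}$, since $(Dx)'$ is componentwise of degree $\leq N-1$ and $x$ of degree $\leq N$. Because $M\geq N+1+N_{AB}$, Lagrange interpolation at the $M$ collocation nodes reproduces polynomials of degree $\leq M-1$ exactly, so
\[
\sum_{i=1}^{M}l_{ji}(t)\bigl(A(t_{ji})(Dx)'(t_{ji})+B(t_{ji})x(t_{ji})\bigr)=A(t)(Dx)'(t)+B(t)x(t),\quad t\in[t_{j-1},t_j],
\]
for every $x\in X_{\pi}$. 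Letting $\tilde q_{\pi}$ denote the piecewise Lagrange interpolant of $q$ at the same nodes, the functional collapses to
\[
\Phi^{R}_{\pi,M}(x)=\int_{a}^{b}\bigl|A(t)(Dx)'(t)+B(t)x(t)-\tilde q_{\pi}(t)\bigr|^{2}\,dt+|G_{a}x(a)+G_{b}x(b)-d|^{2},\quad x\in X_{\pi},
\]
which is precisely the continuous functional $\Phi$ from \eqref{eq:Phi} with the data $q$ replaced by the perturbation $\tilde q_{\pi}$.

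From this identification, part (1) follows by a standard perturbation argument. Existence and uniqueness of $x^{R}_{\pi}$ are inherited from \cite[Theorem 4.1]{HMT} applied to the perturbed problem for sufficiently fine $\pi$. For the error I would decompose
\[
\|x^{R}_{\pi}-x_{*}\|_{H^{1}_{D}}\leq\|x^{R}_{\pi}-\hat x_{\pi}\|_{H^{1}_{D}}+\|\hat x_{\pi}-x_{*}\|_{H^{1}_{D}},
\]
where $\hat x_{\pi}\in X_{\pi}$ minimizes the unperturbed $\Phi$. The second term is $O(h^{N-\mu+1})$ by Theorem~\ref{t.C_precision}. Comparing the two normal equations on $X_{\pi}$ and using the instability bound $\gamma_{\pi}\geq c_{\gamma}h_{min}^{\mu-1}$ together with the smooth-data interpolation estimate $\|\tilde q_{\pi}-q\|_{L^{2}}=O(h^{M})$ bounds the first term by $\gamma_{\pi}^{-1}\|\tilde q_{\pi}-q\|_{L^{2}}=O(h^{M-\mu+1})$, which is of strictly higher order than the second since $M\geq N+1+N_{AB}$. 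Part (2) is then a verbatim repeat of the argument for Theorem~\ref{t.1}(2): the integrand $\theta_{j}$ in $\Phi^{R}_{\pi,M}$ is a polynomial of degree at most $2M-2$, so any positive-weight quadrature rule exact for polynomials of that degree satisfies $\Phi^{I}_{\pi,M}(x)=\Phi^{R}_{\pi,M}(x)$ on $X_{\pi}$, reducing (2) to (1).

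Part (3), which asserts uniform boundedness of $\|x^{R}_{\pi}-x_{*}\|_{H^{1}_{D}}$ also in the regime $N<\mu-1$ where the estimate of (1) becomes vacuous, is the main obstacle. The natural route is to exploit the Kronecker canonical decomposition of the constant pencil $\{A,B\}$ to decouple the perturbed equation $A(Dx)'+Bx=\tilde q_{\pi}$ in adapted coordinates into an explicit ODE block, where the collocation error is classically $O(h^{N})$, and nilpotent algebraic blocks, in which the corresponding components are rigid linear combinations of $\tilde q_{\pi}$ and its derivatives up to order $\mu-1$. Since $\tilde q_{\pi}$ is piecewise polynomial of degree $\leq M-1$, these algebraic components admit a piecewise-polynomial comparison element $p_{*}$ satisfying the prescribed boundary data; testing the minimization against $p_{*}$ gives $\Phi^{R}_{\pi,M}(x^{R}_{\pi})\leq\Phi^{R}_{\pi,M}(p_{*})=O(1)$, and converting this functional bound into an $H^{1}_{D}$ bound, combined with the $O(h^{N})$ rate on the ODE block, yields the claimed $h^{\max\{0,N-\mu+1\}}$ exponent. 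The technical hurdle I anticipate is reconciling the piecewise-polynomial canonical-form representative with the $Dx\in H^{1}$ continuity built into $X_{\pi}$: the glueing at interior meshpoints must be done so that $p_{*}$ actually lies in $X_{\pi}$ and so that the subsequent transfer of the functional bound to the $H^{1}_{D}$ norm does \emph{not} reintroduce the $\gamma_{\pi}^{-1}$ blow-up that spoils (1) for small $N$.
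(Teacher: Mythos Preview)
Your argument for parts (1) and (2) is correct and is essentially what lies behind the paper's bare citations. The identity $\Phi^{R}_{\pi,M}(x)=\|\mathcal{T}x-(\tilde q_{\pi},d)\|^{2}$ on $X_{\pi}$ when $M\ge N+1+N_{AB}$ is the substance of \cite[Proposition~2.2(iv)]{HM}; your subsequent normal-equations perturbation bound $\|x^{R}_{\pi}-\hat x_{\pi}\|_{H_{D}^{1}}\le\gamma_{\pi}^{-1}\|\tilde q_{\pi}-q\|_{L^{2}}=O(h^{M-\mu+1})$, combined with \cite[Theorem~4.1]{HMT} for $\|\hat x_{\pi}-x_{*}\|$, reproduces the paper's proof of (1). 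Part (2) is verbatim the paper's reduction.

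For part (3) you have the right strategy but have not closed the argument, and the gap is precisely the one you flag at the end. Note first that your perturbation route from (1) cannot simply be reused here: with $N_{AB}=0$ the hypothesis permits $M=N+1$, and for $N\le\mu-3$ this gives $M-\mu+1=N+2-\mu<0$, so $\gamma_{\pi}^{-1}\|\tilde q_{\pi}-q\|_{L^{2}}$ blows up as $h\to 0$. The constant-coefficient structure must therefore enter the perturbation term as well, not only the approximation term $\|\hat x_{\pi}-x_{*}\|$. The paper delegates all of this to \cite[Theorem~4.7]{HMT}; your Kronecker-form sketch is plausibly how that theorem proceeds, but the issue you anticipate---producing a comparison element that actually lies in $X_{\pi}$ (hence respects the $H^{1}$ continuity of $Dx$) and then passing from a functional bound to an $H_{D}^{1}$ bound without the generic $\gamma_{\pi}^{-1}$ amplification---is a genuine difficulty that remains open in your write-up.
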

\begin{proof}
(1): This follows from  \cite[Proposition 2.2(iv)]{HM} and \cite[Theorem 4.1]{HMT}.
(2): As in the proof of the previous theorem, this is again a comsequence of (1).
 (3): The statement is a consequence of \cite[Proposition 2.2(iv)]{HM} and \cite[Theorem 4.7]{HMT}.\qed
\end{proof}
\begin{remark}\label{r.N1M2}
 Observe a further interesting feature. Let $A$ and $B$ be constant matrices. Set $N=1$, $M=N+1$. Then, it holds that 
 \begin{align*}
 \Phi_{\pi,M}^{C}(x)=\Phi_{\pi,M}^{R}(x)=\Phi_{\pi,M}^{I}(x),\quad x\in X_{\pi},
 \end{align*}
in which $\Phi_{\pi,M}^{I}$ is associated to the corresponding  Gauss-Legendre or Gauss-Radau rules. This follows from the fact that the 2-point Chebyshev integration nodes are just the Gauss-Legendre nodes.

We underline that, by Theorem \ref{t.2}(3), the approximate solutions stay bounded also for DAEs with larger index $\mu$, for instance \cite[Table 6]{HMTWW} confirming that for an index four Jordan DAE.\qed
\end{remark}

Having in mind the implementation of such an overdetermined least-squares collocation, for given partition
$\pi$ and a given polynomial degree $N$, 
 a number of parameters and options must be selected:
\begin{itemize}
\item basis functions for $X_{\pi}$;
\item number $M$ of collocation points and their location $0\leq\tau_1<\cdots<\tau_{M}\leq1$;
\item setup and solution of the discrete least-squares problem.
\end{itemize}
Below we will discuss several issues in this context. The
main aim is on implementations being as stable as possible, not necessarily
best computational efficiency.

\section{Collocation nodes, mass matrix and integration weights}\label{s.Nodes,weights}

\subsection{Collocation nodes for $\Phi^{R}_{\pi,M}$}
The functional $\Phi^{R}_{\pi,M}$ in (\ref{eq:PhiR}) is based on polynomial interpolation using $M$ nodes (\ref{eq:nodes}). It seems reasonable to choose these nodes in such a way that, separately on each subinterval
$[t_{j-1},t_{j}]$ of the partition, the interpolation error is as small as possible in a certain sense. 
Without restriction of the generality we can trace back the matter to the interval $[0,1]$.

Consider functions $q\in C([0,1],\R^{m})$ and define the interpolation operator $R_{M}:C([0,1],\R^{m})\rightarrow C([0,1],\R^{m})$ by
\begin{align*}
 R_{M}q=\sum_{i=1}^{M}l_{i}q(\tau_{i}),
\end{align*}
with the Lagrange basis functions (\ref{eq:elfa}) such that $(R_{M}q)(\tau_{i})=q(\tau_{i})$, $i=1,\ldots,M$, and $R_{M}q\in Y_{M}$, where $Y_{M}\subset C([0,1],\R^{m})$ is the subspace of all functions whose components are polynomials up to degree $M-1$. Introducing $\omega(\tau)=(\tau-\tau_{1})(\tau-\tau_{2})\cdots(\tau-\tau_{M})$ and using componentwise the divided differences
we have the error representation, e.g., \cite[Kapitel~5]{HaemHoff91},
\begin{align*}
 q(\tau)-(R_{M}q)(\tau)=\omega(\tau)\,q[\tau_{1},\ldots,\tau_{M},\tau].
\end{align*}
For smooth functions $q\in C^M([0,1],\R^{m})$ it follows that
\begin{align*}
 \lVert q-R_{M}q\rVert^{2}_{L^{2}}=\int_{0}^{1}\omega(\tau)^{2}\,\lvert q[\tau_{1},\ldots,\tau_{M},\tau]\rvert^{2}{\rm d\tau}\leq \int_{0}^{1}\omega(\tau)^{2}{\rm d\tau}\frac{m}{(M!)^{2}}\lVert q^{(M)}\rVert^2_{\infty}.
\end{align*}
For the
evaluation of $\Phi_{\pi,M}^{R}$ (\ref{eq:PhiR}), it seems resonable
to choose the collocation nodes in such a way that this expression
is minimized for all functions $q\in C^{(M)}([0,1],\R^{m})$. 
The optimal set of nodes is determined by the condition
\[
\min_{0\leq\tau_{1}<\cdots<\tau_{M}\leq1}\|\omega\|_{L^{2}(0,1)}.
\]
It is well known that this functional is minimized if the collocation
nodes are chosen to be the Gauss-Legendre nodes \cite[Chapter 7.5.1 and 4.5.4]{HaemHoff91}.

On the other hand, the best polynomial approximation to a given function $q$ in the
$L^{2}$-norm is obtained if the Fourier approximation with
respect to the Legendre polynomials is computed. However, to the best
of our knowledge, there are no estimations of the interpolation error
in $L^{2}((0,1),\R^{m})$ known.\footnote{It holds that 
$\lVert R_{M}q\rVert_{\infty}\leq \max_{\tau\in[0,1]}\sum_{i=1}^{M} \lvert l_{i}(\tau)\rvert\lvert  q(\tau_{i})\rvert\leq \Lambda_{M} \lVert q\rVert_{\infty}$ which means that the interpolation operator $R_{M}$ is bounded in $C([0,1],\R^{m})$, and the Lebesgue constant is a bound of the operator norm. In contrast, $R_{M}$ it is unbounded in $L^{2}((0,1),\R^{m})$!}
However, in the uniform norm and with arbitrary node sequences, for each $q\in C([0,1],\R^{m})$, 
the estimate
\[
\|R_{M}q-q\|_{\infty}\leq(1+\Lambda_{M})\dist_{\infty}(q,Y_{M})
\]
holds true where $\dist_{\infty}(q,Y_{M})=\min\{\|q-y\|_{\infty}|y\in Y_{M}\}$ and 
$\Lambda_{M}$ is so-called Lebesgue constant defined by
\[
\Lambda_{M}=\max_{\tau\in[0,1]}\sum_{i=1}^{M}|l_{i}(\tau)|
\]
in which $l_{i}$ are again the Lagrange basis functions (\ref{eq:elfa}).

The Lebesgue constant $\Lambda_{M}^{L}$ for Gauss-Legendre nodes
has the property $\Lambda_{M}^{L}=O(\sqrt{M})$. If instead Chebyshev
nodes are used, the corresponding Lebesgue constant $\Lambda_{M}^{C}$
behaves like $\Lambda_{M}^{C}=O(\log M)$ (\cite[p 206]{FoSl94} and
the references therein). 
For uniform polynomial approximations, these nodes are known to be
optimal \cite[Theorem 7.6]{DeuflHohmann}. Table \ref{tab:Lebesgue}
shows some values for the Lebesgue constants. Note that the Lebesgue
constants $\Lambda_{M}^{U}$ for equidistant nodes grow exponentially,
see e.g. \cite{TrefWeid91}.\footnote{For each $M$, there is a set of interpolation nodes $\tau_{i}^{\ast}$
which minimizes the corresponding Lebesgue constant $\Lambda_{M}^{\ast}$.
This constant is only slightly smaller than $\Lambda_{M}^{C}$ \cite{Ibrahimoglu16}.}

\begin{table}
\caption{Lebesgue contants for Chebyshev nodes ($\Lambda_{M}^{C}$), Gauss-Legendre
nodes ($\Lambda_{M}^{L}$), Gauss-Lobatto nodes ($\Lambda_{M}^{Lo}$),
Gauss-Radau nodes ($\Lambda_{M}^{R}$), and uniform nodes including
the boundaries ($\Lambda_{M}^{U}$) and without boundaries ($\Lambda_{M}^{O}$)}

\begin{centering}
\begin{tabular}{|c|c|c|c|c|r|r|}
\hline 
$M$ & $\Lambda_{M}^{C}$ & $\Lambda_{M}^{L}$ & $\Lambda_{M}^{Lo}$ & $\Lambda_{M}^{R}$ & $\Lambda_{M}^{U}$ & $\Lambda_{M}^{O}$\tabularnewline
\hline 
\hline 
5 & 1.989 & 3.322 & 1.636 & 4.035 & 2.708 & 10.375\tabularnewline
\hline 
10 & 2.429 & 5.193 & 2.121 & 6.348 & 17.849 & 204.734\tabularnewline
\hline 
15 & 2.687 & 6.649 & 2.386 & 8.126 & 283.211 & 5107.931\tabularnewline
\hline 
20 & 2.870 & 7.885 & 2.576 & 9.627 & 5889.584 & 138852.138\tabularnewline
\hline 
\end{tabular}\label{tab:Lebesgue}
\par\end{centering}
\end{table}

\begin{remark}
Computation of nodes and weights for for Gauss-type integration formulae

In the following we will make heavy use of Gauss-Legendre, Gauss-Radau,
and Gauss-Lobatto integration nodes and their corresponding weights.
Since we do not have them available in tabular form for large $M$
with sufficient accuracy, they will be computed on the fly. A severe
concern is the accuracy of the nodes and weights. In the case of Gauss-Legendre
integration rules, the computed nodes and weights have been provided by the
Gnu Scientific Library routine \texttt{glfixed.c} \cite{GSL09}. It
makes use of tabulated values for $M=1(1)20$, 32, 64, 96, 100, 128,
256, 512, 1024 with an accuracy of 27 digits. Other values are computed
on the fly with an accuracy being a small multiple of the machine
rounding unit using an adapted version of the Newton method.

For computing the Gauss-Lobatto nodes and weights, the methods of
\cite{Michels63} (using the Newton method) as well as \cite{Gautschi00b}
(a variant of the method in \cite{GolubWelsch69}) have been implemented.
Table \ref{tab:Lobatto} contains some comparisons to the tabulated
values in \cite{Michels63} that have 20 digits. The method of \cite{Michels63}
provides slighly more accurate values than that of \cite{Gautschi00b}.
Therefore, the former has been used further on.

We did not find sufficiently accurate tabulated values for the Gauss-Radau
nodes and weights. Therefore, the method of \cite{Gautschi00a} has
been implemented. We assume that the results obtained have an accuracy
similar to the values for the Gauss-Lobatto nodes and weights using
the method in \cite{Gautschi00b}. \qed
\end{remark}

\begin{table}
\caption{Accuracy of the computed nodes and weights of the Gauss-Lobatto integration
rules. For each method, the absolute error of the nodes (A), the absolute
error of the weights (B), the maximum componentwise relative error
of nodes (C) and weights (D) are shown. The machine accuracy (machine
epsilon) is $2.22\times10^{-16}$}
\label{tab:Lobatto}
\begin{centering}
\begin{tabular}{|c|c|c|c|c|c|c|c|c|}
\hline 
$M$ & \multicolumn{4}{c|}{Method of\cite{Michels63}} & \multicolumn{4}{c|}{Method of \cite{Gautschi00b}}\tabularnewline
\hline 
 & (A) & (B) & (C) & (D) & (A) & (B) & (C) & (D)\tabularnewline
\hline 
\hline 
6 & 1.11e-16 & 1.11e-16 & 4.73e-16 & 5.87e-16 & 1.11e-16 & 3.33e-16 & 4.73e-16 & 9.99e-15\tabularnewline
\hline 
12 & 1.11e-16 & 8.33e-17 & 2.01e-15 & 6.14e-16 & 3.33e-16 & 4.44e-16 & 6.04e-15 & 5.86e-14\tabularnewline
\hline 
24 & 0 & 3.47e-17 & 0 & 9.64e-16 & 2.22e-16 & 2.22e-16 & 8.37e-15 & 1.23e-13\tabularnewline
\hline 
48 & 1.11e-16 & 3.47e-17 & 3.41e-14 & 5.23e-15 & 3.33e-16 & 1.22e-15 & 1.71e-13 & 2.76e-12\tabularnewline
\hline 
96 & 5.55e-16 & 2.95e-17 & 4.73e-16 & 1.76e-14 & 4.44e-16 & 4.44e-16 & 2.76e-13 & 4.05e-12\tabularnewline
\hline 
\end{tabular}
\par\end{centering}
\end{table}

\subsection{The mass matrix}
In the following, we will make extensive use of Legendre polynomials.
For the readers convenience, the necessary properties are collected
in Appendix \ref{subsec:Legendre-polynomials}.
\medskip

Let us turn to $\Phi_{\pi,M}^{R}$ (\ref{eq:PhiR}) again. A critical
ingredient for determining its properties is the mass matrix $L^{R}$
in (\ref{eq:massmatrix}). Denote as before by $l_{i}(\tau)$, $i=1,\ldots,M$,
the Lagrange basis functions for the node sequence (\ref{eq:nodes}),
that is, cf.~(\ref{eq:elfa}),
\[
l_{i}(\tau)=\frac{\prod_{\kappa\neq i}(\tau-\tau_{\kappa})}{\prod_{\kappa\neq i}(\tau_{i}-\tau_{\kappa})}.
\]
For evaluating $L^{R}$, we will use the normalized shifted Legendre
polynomials $\hat{P}_{\nu}=(2\nu+1)^{1/2}\tilde{P}_{\nu}$ (cf Appendix~\ref{subsec:Legendre-polynomials}).
Assume the representation
\begin{equation}
l_{i}(\tau)=\sum_{\nu=1}^{M}\alpha_{i\nu}\hat{P}_{\nu-1}(\tau). \label{eq:LL}
\end{equation}
A short calculation shows
\[
L_{ij}^{R}=\sum_{\lambda=1}^{M}\alpha_{i\lambda}\alpha_{j\lambda}.
\]
Letting $a^{i}=(\alpha_{i1},\ldots,\alpha_{iM})^{T}$ we obtain $L_{ij}^{R}=(a^{i})^{T}a^{j}$.
Collecting the vectors $a^{i}$ in a matrix $A=(a^{1},\ldots,a^{M})$
it holds $L^{R}=A^{T}A$. The definition of the coefficients $\alpha_{i\nu}$
provides us with $\tilde{V}a^{i}=e^{i}$ where $e^{i}$ denotes the $i$-th unit vector and
\begin{equation}
\tilde{V}=\left[\begin{array}{ccc}
\hat{P}_{0}(\tau_{1}) & \ldots & \hat{P}_{M-1}(\tau_{1})\\
\vdots &  & \vdots\\
\hat{P}_{0}(\tau_{M}) & \ldots & \hat{P}_{M-1}(\tau_{M})
\end{array}\right].\label{eq:VDMmatrix}
\end{equation}
This gives $A=\tilde{V}^{-1}$.

$V=\tilde{V}^{T}$ is a so-called Vandermonde-like matrix \cite{Gautschi83}.
It is nonsingular under the condition (\ref{eq:nodes}) \cite[Theorem 3.6.11]{Stoer89}. 
In \cite{Gautschi83}, representations and estimations of the condition
number with respect to the Frobenius norm of such matrices are derived.
In particular, \cite[Table 1]{Gautschi83} shows impressingly small
condition numbers if the collocation nodes are chosen
to be the zeros of $\tilde{P}_{M}$, that is the Gauss-Legendre nodes.
Moreover, this condition number is optimal among all scalings of the
Legendre polynomials \cite{Gautschi83}. A consequence of the Christoffel-Darboux
formula is that the rows of $\tilde{V}$ are orthogonal for Gauss-Legendre
nodes.\footnote{The Christoffel-Darboux formula for Legendre polynomials reads: If
$i\neq\kappa$, then
\[
\sum_{\nu=0}^{M-1}\hat{P}_{\nu}(\tau_{i})\hat{P}_{\nu}(\tau_{\kappa})=\frac{\mu_{M-1}}{\mu_{M}}\frac{\hat{P}_{M}(\tau_{i})\hat{P}_{M-1}(\tau_{\kappa})-\hat{P}_{M}(\tau_{\kappa})\hat{P}_{M-1}(\tau_{i})}{\tau_{i}-\tau_{\kappa}}
\]
where $\mu_{M}$ and $\mu_{M-1}$ are the leading coefficients of
$\hat{P}_{M}$ and $\hat{P}_{M-1}$, respectively. For the Gauss-Legendre
nodes, it holds $\hat{P}_{M}(\tau_{i})=0$. Hence, the right hand side
vanishes.} Thus, we have the representation $\tilde{V}=\mathcal{D}U$ with an
orthogonal matrix $U$ and a diagonal matrix $\mathcal{D}$ with positive
diagonal entries.\footnote{The diagonal elements $d_{i}$, $i=1,\ldots,M$ of $\mathcal{D}$
can be evaluated analytically using the Christoffel-Darboux formula
again:
\[
d_{i}=\sum_{\nu=0}^{M-1}\hat{P}_{\nu}^{2}(\tau_{i})=\frac{\mu_{M-1}}{\mu_{M}}\left(\hat{P}_{M}'(\tau_{i})\hat{P}_{M-1}(\tau_{i})-\hat{P}_{M-1}(\tau_{i})\hat{P}_{M}(\tau_{i})\right)=\frac{\mu_{M-1}}{\mu_{M}}\hat{P}_{M}'(\tau_{i})\hat{P}_{M-1}(\tau_{i}).
\]
}

It is known that the Gauss-Legendre nodes are not the very best set of
nodes. However, a comparison of Tables 1 and 2 in \cite{Gautschi83}
as well as \cite[Table 4]{Gautschi11} indicates that the gain of
choosing optimal nodes for Legendre polynomials compared to the choice
of Gauss-Legendre nodes is rather minor.

In Table \ref{tab:CondV} we provide condition numbers of $\tilde{V}$
with respect to the Euclidean norm for different choices of nodes.
Note that the condition number of $L^{R}$ is the square of that of
$\tilde{V}$.

The condition numbers for all Gauss-type and Chebyshev nodes are remarkably small.

\begin{table}

\caption{Spectral condition numbers of the Vandermonde-like matrices for different
node choices. The columns represent Gauss-Legendre nodes (GLe), Gauss-Radau
nodes (GR), Gauss-Lobatto nodes (GLo), Chebyshev nodes (Ch), Newton-Cotes
type nodes including the boundary (cNC) and without the boundary (oNC).
An asterisk {*} indicates an overflow condition}

\centering{}%
\begin{tabular}{c|cccccc}
$M$ & GLe & GR & GLo & Ch & cNC & oNC\tabularnewline
\hline 
5 & 1.55e+0 & 2.79e+0 & 3.23e+0 & 2.16e+0 & 3.76e+0 & 3.04e+0\tabularnewline
10 & 2.11e+0 & 3.96e+0 & 4.28e+0 & 3.00e+0 & 2.39e+1 & 5.23e+1\tabularnewline
15 & 2.57e+0 & 4.85e+0 & 5.11e+0 & 3.66e+0 & 3.98e+2 & 1.14e+3\tabularnewline
20 & 2.94e+0 & 5.60e+0 & 5.83e+0 & 4.21e+0 & 8.62e+3 & 3.10e+4\tabularnewline
50 & 4.62e+0 & 8.86e+0 & 9.00e+0 & 6.60e+0 & 1.13e+12 & 1.97e+13\tabularnewline
100 & 6.52e+0 & 1.25e+1 & 1.26e+1 & 9.32e+0 & {*} & {*}\tabularnewline
\end{tabular}\label{tab:CondV}
\end{table}

\subsection{Computation of quadrature weights for general $\Phi^{I}_{\pi,M}$}

In oder to apply $\Phi_{\pi,M}^{I}$(\ref{eq:PhiI}), a numerical
quadrature formulae is necessary. For standard
nodes sequences (Gauss-Legendre, Gauss-Lobatto, Gauss-Radau) their
computation has been described above. However, for general node sequences,
the weights must be evaluated. This can be done following the derivations
in \cite[p. 175]{Stoer89}: Let $\hat{P}_{\nu}(\tau)$ denote the
normalized shifted Legendre polynomials as before (cf. Appendix \ref{subsec:Legendre-polynomials}). In
particular, it holds then
\[
\int_{0}^{1}\hat{P}_{0}(\tau){\rm d}\tau=1,\quad\int_{0}^{1}\hat{P}_{\nu}(\tau){\rm d}\tau=0,\quad\nu=1,2,\ldots
\]
For a given function $q\in C[0,1]$,
the integral is approximated by the integral of its polynomial interpolation.
Using the representation (\ref{eq:LL}) of the Lagrange basis functions we obtain
\begin{align*}
\int_{0}^{1}q(\tau)d\tau & \approx\int_{0}^{1}\sum_{i=1}^{M}q(\tau_{i})\sum_{\nu=1}^{M}\alpha_{i\nu}\hat{P}_{\nu-1}(\tau){\rm d}\tau\\
 & =\sum_{i=1}^{M}q(\tau_{i})\sum_{\nu=1}^{M}\alpha_{i\nu}\int_{0}^{1}\hat{P}_{\nu-1}(\tau){\rm d}\tau\\
 & =\sum_{i=1}^{M}q(\tau_{i})\alpha_{i1}.
\end{align*}
Consequently, for the weights it holds $\gamma_{i}=\alpha_{i1}$,
$i=1,\ldots,M$. The definition (\ref{eq:LL}) shows that the vector
$\gamma=(\gamma_{1},\ldots,\gamma_{M})^{T}$ of weights fulfills the
linear system
\[
V\gamma=e^{1}
\]
where $V=\tilde{V}^{T}$with $\tilde{V}$ from (\ref{eq:VDMmatrix})
and $e^{1}=(1,0,\ldots,0)^T$ is the first unit vector.

The discussion of the condition number of $V$ shows that we can expect
reliable and accurate results at least for reasonable node sequences.

For general node sequences, the weights may become negative. This
happens, for example, for uniformly distributed nodes and $M>7$ (Newton-Cotes
formulae) \cite[p. 148]{Stoer89}. So for $\Phi_{\pi,M}^{I}$, only
node sequences leading to positive quadrature weights $\gamma_{i}$
are admitted in order to prevent $L^{I}$ from not being positive definite.

\section{Choice of basis functions for the ansatz space $X_{\pi}$}\label{s.Basis}

The ansatz space $X_{\pi}$ (\ref{eq:Xn}) consists of piecewise polynomials
having the degree $N-1$ for the algebraic components and the degree
$N$ for the differentiated ones on each subinterval of the partition $\pi$ (\ref{eq:mesh}).
For collocation methods for boundary value problems for ordinary differential
equations this questions has lead to the choice of a Runge-Kutta basis
for stability reasons, see \cite{BaderAscher}. This has been later
on also used successfully for boundary value problems for index-1
DAEs \cite{AscherSpiteri,KKPSW,LMW,KMPW10}. However, this ansatz
makes heavily use of the collocation nodes which are at the same time used as
the nodes for the Runge-Kutta basis. In our case, the number $M$
of collocation nodes and the degree $N$ of the polynomials for the
differentiated components do not coincide since $M>N$ such that the
reasoning applied in the case of ordinary differential equations does not 
transfer to the least-squares case.

Taking into account the computational expense for solving the discretized
system, bases with local support are preferable. Ideally, the support
of each basis function consists of only one subinterval of (\ref{eq:mesh}).\footnote{This excludes for example B-spline bases!} Note that the Runge-Kutta basis has this property. We consider the Runge-Kutta basis and further local basis with orthogonal polynomials.
A drawback of
this strategy is the fact that the continuity of the piecewise polynomials
approximating the differentiated components must be ensured explicitly.
This in turn will lead to a discrete least-squares problem with equality
constraints. Details can be found in Appendix~\ref{subsec:The-structure-of}.

Looking for a local basis we turn to the reference interval $[0,1]$.
 Once a basis on this
reference interval is available it can be defined on any subinterval
$(t_{j-1},t_{j})$ by a simple linear transformation.

Assume that $\{p_{0},\ldots,p_{N-1}\}$ is a basis of the set of polynomials
of degree less than $N$ defined on the reference interval $[0,1]$.
Then, a basis $\{\bar{p}_{0},\ldots,\bar{p}_{N}\}$ for the ansatz
functions for the differentiated components is given by
\begin{equation}
\bar{p}_{i}(\rho)=\begin{cases}
1, & i=0,\\
\int_{0}^{\rho}p_{i-1}(\sigma){\rm d}\sigma, & i=1,\ldots,N,\quad\rho\in[0,1],
\end{cases}\label{eq:diffBasis}
\end{equation}
and the transformation to the interval $(t_{j-1},t_{j})$ of the partition
$\pi$ (\ref{eq:mesh}) yields
\begin{align}
p_{ji}(t) & =p_{i}((t-t_{j-1})/h_{j}),\nonumber \\
\bar{p}_{ji}(t) & =h_{j}\bar{p}_{i}((t-t_{j-1})/h_{j}).\label{eq:scaledp}
\end{align}
Additionally to this transformation, the continuity of the piecewise
polynomials must be ensured. This gives rise to the additional conditions
\begin{equation}
\bar{p}_{ji}(t_{j})=\bar{p}_{j+1,i}(t_{j}),\quad  i=1,\ldots,N,\quad  j=1,\ldots,n-1,
\label{eq:CC}
\end{equation}
which must be imposed explicitely.\footnote{This is in contrast to choices of basis functions that fulfil the
basis conditions. An example of such basis functions are B-splines.}

\subsection{The Runge-Kutta basis}

In order to define the Runge-Kutta basis, let the $N$ interpolation points
$\rho_{i}$ with (\ref{eq:IntNodes})
be given. Then, the Lagrange basis functions are chosen,
\[
p_{i}(\rho)=\frac{\prod_{\kappa\neq i+1}(\rho-\rho_{\kappa})}{\prod_{\kappa\neq i+1}(\rho_{i}-\rho_{\kappa})},\quad i=0,\ldots,N-1.
\]
\begin{remark}\label{r.interpolationnodes}
Note that the interpolation nodes are only used to define the local
basis functions. Thus, their selection is completely independent of
the choice of collocation nodes. In view of the estimations \eqref{eq:alpha} and \eqref{eq:alpha1} and the argumentation there we prefer Gauss-Legendre
interpolation nodes. This choice is also supported by Experiments~\ref{exp:2} and \ref{exp:5} below.
\end{remark}
The numerical computation of $\bar{p}$ is more involved. If not precalculated,
the integrals must be available in a closed formula. This can surely
be done by expressing the Lagrange basis functions in the monomial representation
such that the integration can be carried out analytically. Once these
coefficients are known, the evaluation of the values of the basis
functions at a given $\rho\in[0,1]$ is easily done using the Horner
method. However, this approach amounts to the inversion of the Vandermonde
matrix using the nodes (\ref{eq:IntNodes}). This matrix is known
to be extremly ill-conditioned. In particular, its condition number
grows exponentially with $N$ \cite{GaIn88,Beckermann00}.
Therefore, an orthogonal basis might be better suited. This leads
to a representation
\begin{equation}
p_{i}(\rho)=\sum_{\kappa=1}^{N}\alpha_{i\kappa}Q_{\kappa}(\rho)\label{eq:LagI-1}
\end{equation}
for some polynomials $Q_{1},\ldots,Q_{N}$. If these polynomials fulfil
a three-term recursion,\footnote{which they do if the polynomials are orthogonal with respect to some
scalar product \cite[Theorem 6.2]{DeuflHohmann}.} the evaluation of function values can be performed using the Clenshaw
algorithm \cite{FoxParker68} which is only slightly more expensive
than the Horner method. In order to use this approach, the integrals
of $p_{0},\ldots,p_{N-1}$ must be easily representable in terms of
the chosen basis. Here, the Legendre and Chebyshev polynomials are
well-suited (cf below Appendix~\ref{subsec:Legendre-polynomials}
and (\ref{eq:Int_Legendre}) as well as Appendix~\ref{subsec:Chebyshev-polynomials}
and (\ref{eq:Int_Chebyshev})).

\subsection{Orthogonal polynomials}

A reasonable choice for the basis are orthogonal polynomials. We will
consider Legendre polynomials first. A motivation is provided in the
following example.
\begin{example}
Consider the index-1 DAE
\[
x=q(t),\quad t\in[0,1].
\]
Let $\{\hat{P}_{0},\ldots,\hat{P}_{N-1}\}$ be the normalized shifted
Legendre polynomials. Then letting $x=\sum_{i=1}^{N}\alpha_{i}\hat{P}_{i-1}$ for
some vector $\alpha=(\alpha_{1},\ldots,\alpha_{N})^{T}$, the least-squares
functional
\[
\Phi(x)=\int_{0}^{1}(x(t)-q(t))^{2}{\rm d}t
\]
corresponding to this DAE is minimized if $\alpha=b$ and $b=(b_{1},\ldots,b_{N})^{T}$
where $b_{i}=\int_{0}^{1}q(t)\hat{P}_{i-1}(t)dt$ which is just the
best approximation of the solution in $H_{D}^{1}((0,1),\R)=L^{2}((0,1),\R)$.

Similar relations hold for the differential equation $x'=f$ if the
basis functions for the differentiated components are constructed
according to (\ref{eq:diffBasis}). Hence, these basis functions seem
to qualify well for index-1 DAEs.\hfill\qed
\end{example}
The necessary ingredients for the efficient implementation of the
Legendre polynomials are collected in Appendix~\ref{subsec:Legendre-polynomials}.

Another common choice are Chebyshev polynomials of the first kind.
They have been used extensivly in the context of spectral methods
because of their excellent approximation properties, cf \cite{Fornberg96,Trefethen00},
see also \cite{DrHatr14}. The relations used for their implementation
can be found in Appendix~\ref{subsec:Chebyshev-polynomials}.\footnote{Let us note in passing that the first routine for solving two-point
boundary problems in the NAG library (NAG® is a registered trademark
of The Numerical Algorithms Group) besides shooting methods was just
a least-squares collocation method corresponding to $n=1$ and using
a version of the functional $\Phi_{\pi,M}^{C}$. The NAG routine D02AFF
and its predecessor D02TGF (and its driver D02JBF) use Chebyshev polynomials
as basis functions and Gauss-Legendre nodes as collocation points
\cite{Gladwell79,Albasiny78}. This routine appeared as early as 1970
in Mark 8 of the library and survived to date (as of Mark 27 of 2019)
\cite{NAG}.}

\subsection{Comparison of different basis representations}

The choice of the basis function representations is dominated by the
question of obtaining a most robust implementation. The computational
complexity of the representations presented above is not that much
different such that this aspect plays a minor role.

The check for robustness can be subdivided into two questions:
\begin{enumerate}
\item Which representation is most robust locally?
\item Which representation is most robust globally?
\end{enumerate}

In the following experiments, $N$ will be varied. The functional
used is $\Phi_{\pi,M}^{R}$. The number of collocation nodes is $M=N+1$.
Table~\ref{tab:CondV} motivates the choice of the Gauss-Legendre
nodes as collocation nodes. In order to compute the norms of $L^{2}((0,1),\R^{m})$
and $H_{D}^{1}((0,1),\R^{m})$, Gaussian quadrature with $N+2$ integration
nodes on each subinterval of $\pi$ is used.

\subsubsection{Local behavior of the basis representations}

In order to answer the first question, is is reasonable to experiment
first with a higher index example that does not have any dynamic components
(that is, $l=0$) on a grid $\pi$ consisting only of one subinterval
(that is, $n=1$). In that case, we check the ability to interpolate
functions and to numerically differentiate them.

For $n=1$, there are no continuity conditions (\ref{eq:CC}) involved. Therefore, the discrete problem becomes a linear least-squares problem. We will solve it by a Householder
QR-factorization with column pivoting as implemented in the Eigen
library.

The following example is used in \cite{HMTWW,HMT}.

\begin{example}
\label{exa:DAE1}	
\begin{align*} 		x'_{2}(t)+x_{1}(t) & =q_{1}(t),\\ 		t\eta x'_{2}(t)+x'_{3}(t)+(\eta+1)x_{2}(t) & =q_{2}(t),\\ 		t\eta x_{2}(t)+x_{3}(t) & =q_{3}(t),\quad t\in [0,1]. 	\end{align*} 
This
is an index-3 example with dynamical degree of freedom $l=0$ such that no additional boundary or initial
conditions are necessary for unique solvability. We choose the exact
solution 	\begin{align*} 	x_{\ast,1}(t) & =e^{-t}\sin t,\\ 	x_{\ast,2}(t) & =e^{-2t}\sin t,\\ 	x_{\ast,3}(t) & =e^{-t}\cos t 	\end{align*} and
adapt the right-hand side $q$ accordingly. For the exact solution,
it holds $\|x_{\ast}\|_{L^{2}((0,1),\R^{3})}\approx0.673$, $\|x_{\ast}\|_{L^{\infty}((0,1),\R^{3})}=1$,
and $\|x_{\ast}\|_{H_{D}^{1}((0,1),\R^{3})}\approx1.11$.\qed
\end{example}

\begin{experiment}
\label{exp:1} Robustness of the representation of the Runge-Kutta
basis

In a first experiment we intend to clarify the differences between different
representations of the Runge-Kutta basis. The interpolation nodes
(\ref{eq:IntNodes})
have been fixed to be the Gauss-Legendre nodes (cf (\ref{eq:alpha})). The Runge-Kutta basis has
been represented with respect to the monomial, Legendre, and Chebyshev
bases. The results are shown in Figure~\ref{fig:Ex1}
(see appendix). This test indicates that the monomial basis is much
less robust than the others for $N>10$ while the other representations
behave very similar. \hfill\qed
\end{experiment}
\myendexp
\begin{experiment}
\label{exp:2} Robustness of the Runge-Kutta basis with respect to
the node sequence

In this experiment we are interested in understanding the influence
of the interpolation nodes. For that, we compared the uniform nodes
sequence to the Gauss-Legendre and Chebyshev nodes. The uniform nodes
are given by $\rho_{i}=(i-\frac{1}{2})/N$. In accordance with the
results of the previous experiment, the representation of the Runge-Kutta
basis in Legendre polynomials has been chosen. The results are shown
in Figure~\ref{fig:Ex2}. Not unexpectedly,
uniform nodes are inferior to the other choices at least for $N>13$.
On the other hand, there is no significant difference between Gauss-Legendre
and Chebyshev nodes. \hfill\qed
\end{experiment}
\myendexp
\begin{experiment}
\label{exp:3}Robustness of different polynomial representations

In this experiment we intend to compare the robustness of different
bases. Therefore, we have chosen the Runge-Kutta basis with Gauss-Legendre
interpolation nodes, the Legendre polynomials, and the Chebyshev polynomials.
The results are shown in Figure~\ref{fig:Ex3}.
All representations show similar behavior. \hfill\qed
\end{experiment}
A general note is in order. The exact solution has approximately the
norm 1 in all used norms. The machine accuracy is $\varepsilon_{\textrm{mach}}\approx2.22\times10^{-16}$
in all computations. The best accuracy obtained is $10^{-12}$ --
$10^{-14}$. Considering that there is a twofold differentiation involved in the problem of the example 
we would expect a much lower accuracy. This surprising behavior has also been observed in other experiments.

The next example is an index-3 one which has $l=4$ dynamical degrees
of freedom. It is the linearized version of an example presented \cite{CampbellMoore95}
that has also been considered in \cite{HMT}.
\begin{example}
\label{exa:LinCaMo}Consider the DAE
\[
A(Dx(t))'+B(t)x(t)=q(t),\quad t\in[0,5],
\]
where\begin{align*}  A=\begin{bmatrix}     1&0&0&0&0&0\\     0&1&0&0&0&0\\     0&0&1&0&0&0\\     0&0&0&1&0&0\\     0&0&0&0&1&0\\     0&0&0&0&0&1\\     0&0&0&0&0&0    \end{bmatrix},    D=\begin{bmatrix}     1&0&0&0&0&0&0\\     0&1&0&0&0&0&0\\     0&0&1&0&0&0&0\\     0&0&0&1&0&0&0\\     0&0&0&0&1&0&0\\     0&0&0&0&0&1&0    \end{bmatrix}, \end{align*}and
the smooth coefficient matrix\begin{align*}  B(t)=  \begin{bmatrix}            0&0&0&-1&0&0&0\\            0&0&0&0&-1&0&0\\            0&0&0&0&0&-1&0\\            0&0&\sin t&0&1&-\cos t&-2\rho \cos^{2}t\\            0&0&-\cos t&-1&0&-\sin t&-2\rho \sin t\cos t\\            0&0&1&0&0&0&2\rho \sin t\\            2\rho \cos^{2}t&2\rho \sin t \cos t&-2\rho\sin t&0&0&0&0           \end{bmatrix},\quad \rho=5, \end{align*} subject
to the initial conditions
\[
x_{2}(0)=1,\quad x_{3}(0)=2,\quad x_{5}(0)=0,\quad x_{6}(0)=0.
\]
This problem has the tractability index 3 and dynamical dgree of freedom $l=4$. The right-hand side $q$
has been chosen in such a way that the exact solution becomes
\begin{alignat*}{2}  x_{\ast,1} &= \sin t, & x_{\ast,4} &= \cos t, \\
x_{\ast,2} &= \cos t, & x_{\ast,5} &= -\sin t, \\
x_{\ast,3} &= 2\cos^2 t, & x_{\ast,6} &= -2\sin 2t, \\
x_{\ast,7} &= -\rho^{-1}\sin t. & & \end{alignat*} For
the exact solution, it holds $\|x_{\ast}\|_{L^{2}((0,5),\R^{7})}\approx5.2$,
$\|x_{\ast}\|_{L^{\infty}((0,5),\R^{7})}=2$, and $\|x_{\ast}\|_{H_{D}^{1}((0,5),\R^{7})}\approx9.4$.\hfill\qed
\end{example}
The following experiments with Example~\ref{exa:LinCaMo} are carried
out under the same conditions as before when using Example~\ref{exa:DAE1}.
\begin{experiment}
\label{exp:4}Robustness of the representation of the Runge-Kutta
basis

In this experiment we intend to clarify the differences between different
representations of the Runge-Kutta basis. The interpolation points
have been fixed to be the Gauss-Legendre nodes. The Runge-Kutta basis has
been represented with respect to the monomial, Legendre, and Chebyshev
bases. The results are shown in Figure~\ref{fig:Ex4}.
This test indicates that the monomial basis is much less robust than
the others for $N>15$ while the other representations behave very
similar. \hfill\qed
\end{experiment}
\myendexp
\begin{experiment}
\label{exp:5}Robustness of the Runga-Kutta basis with respect to
the node sequence

In this experiment we are interested in understanding the influence
of the interpolation nodes. For that, we compared the uniform nodes
sequence to the Gauss-Legendre and Chebyshev nodes. The uniform nodes
are given by $\rho_{i}=(i-\frac{1}{2})/N$. In accordance with the
results of the previous experiment, the representation of the Runge-Kutta
basis in Legendre polynomials has been chosen. The results are shown
in Figure~\ref{fig:Ex5}. Not unexpectedly,
uniform nodes are inferior to the other choices at least for $N>20$.
However, there is no real difference between Gauss-Legendre and Chebyshev
nodes. \hfill\qed
\end{experiment}
\myendexp
\begin{experiment}
\label{exp:6}Robustness of different polynomial representations

In this experiment we intend to compare the robustness of different
bases. Therefore, we have chosen the Runge-Kutta basis with Gauss-Legendre
interpolation nodes, the Legendre polynomials, and the Chebyshev polynomials.
The results are shown in Figure~\ref{fig:Ex6}.
All representations show similar behavior. \myendexp
\end{experiment}
As a conclusion, we can see that the results of the Experiments \ref{exp:1}-\ref{exp:3}
and \ref{exp:4}-\ref{exp:6} are largely consistent.

\subsubsection{Global behavior of the basis representations}

We are interested in understandig the global error, which corresponds
to error propagation in the case of initial value problems. In order
to understand the error propagation properties we will investigate
the accuracy of the computed solution with respect to an increasing
number of subintervals $n$. This motivates to use a rather low order
$N$ of polynomials. In the previous section we observed that there
is no difference in the local properties between different basis representations
for low degrees $N$ of the ansatz polynomials.

In the following experiments, the functionals used are $\Phi_{\pi,M}^{R}$
and $\Phi_{\pi,M}^{C}$. The number of collocation nodes is again $M=N+1$.
The basis functions are the shifted Legendre polynomials. 

The discrete problem for $n>1$ is an equality constraint
linear least-squares problem. The equality constraints consists just
of the continuity requirements for the differentiated components of
the elements in $X_{\pi}$. The problem is solved by a direct solution method as described in Section~\ref{sec:The-discrete-least-squares}. In short, the equality constraints are eliminated by a sparse QR-decomposition
with column pivoting as implemented in the code SPQR \cite{SPQR}.
The resulting least-squares problem has then been solved by the same code.

\begin{experiment}
\label{exp:7}Influence of selection of collocation nodes, approximation
degree $N$, and number $n$ of subintervals

In this experiment, we use Example \ref{exa:LinCaMo} and vary the
choice of collocation nodes as well as the degree $N$ of the polynomial
basis and the number $n$ of subintervals. We compare Gauss-Legendre, Radau IIA and Lobatto collocation nodes.
Since this example is a
pure initial value problem, the use of the Radau IIA collocation nodes
is especially justified.\footnote{Such methods are proven in time-stepping procedures for ordinary 
initial value problems because of their stability properties. Radau IIA methods are also used for many DAEs with index $\mu\leq2$ since the generated approximations on the grid points satisfy the obvious constraint.} 
The results using $\Phi_{\pi,M}^{R}$ are collected in Table~\ref{tab:LinCaMo},
those using $\Phi_{\pi,M}^{C}$ in Table~\ref{tab:LinCaMo_C}. We
observe no real difference between the different sets of collocation points.
The results seem to confirm the conjecture that, in case of smooth problems, a higher degree $N$ is preferable
over a larger $n$ or, equivalently, a smaller stepsize $h$. In addition, for the highest degree polynomials ($N=20$), the use of
$\Phi_{\pi,M}^{C}$ seems to produce more accurate results than that
of $\Phi_{\pi,M}^{R}$. \hfill\qed

\end{experiment}

\section{The discrete least-squares problem}\label{sec:The-discrete-least-squares}
Once the basis has been chosen and the collocation conditions are
selected, the discrete problems (\ref{eq:PhiR}), (\ref{eq:PhiI}),
and (\ref{eq:PhiC}) for a linear boundary value problem (\ref{eq:DAE}) - (\ref{eq:BC})
lead to a constraint linear least-squares problem
\begin{equation}
\varphi(c)=\|\mathcal{A}c-r\|_{\R^{nmM+l}}^{2}\rightarrow\min!\label{eq:discmin}
\end{equation}
under the constraint
\begin{equation}
\mathcal{C}c=0.\label{eq:discconstraint}
\end{equation}
The equality constraints consists of the $k(n-1)$ continuity conditions for
the approximation of the differential constraints while the functional
$\varphi(c)$ represent a reformulation of the functionals (\ref{eq:PhiR}),
(\ref{eq:PhiI}), and (\ref{eq:PhiC}), respectively. Here, $c\in\R^{n(mN+k)}$ is
the vector of coefficients of the basis functions for $X_{\pi}$ disregarding the continuity conditions. 
Furthermore, it holds $r\in\R^{nM}$,
$\mathcal{A}\in\R^{(nM+l)\times n(mN+k)}$, and $\mathcal{C}\in\R^{(n-1)k\times n(mN+k)}$.
The matrices $\mathcal{A}$ and $\mathcal{C}$ are very sparse. For
details about their structure we refer to Appendix \ref{subsec:The-structure-of}.

\subsection{Approaches to solve the constraint optimization problem (\ref{eq:discmin})-(\ref{eq:discconstraint})}
A number of approaches to solve the constraint optimization problem (\ref{eq:discmin})-(\ref{eq:discconstraint})
have been tested.
\begin{enumerate}
\item Direct method. The solution manifold of (\ref{eq:discconstraint})
forms a subspace which can be characterized by\footnote{$\mathcal{C}$ has full row rank.}
\[
\mathcal{C}c=0\text{ if and only if }c=\tilde{\mathcal{C}}z\text{ for some }z\in\R^{nmN+k-l}.
\]
Here, $\tilde{\mathcal{C}}\in\R^{n(mN+k)\times(nmN+k-l)}$ has orthonormal
columns. With this representation, the constrained minimization problem
can be reduced to the uncontrained one 
\[
\tilde{\varphi}(z)=\|\mathcal{A}\tilde{\mathcal{C}}z-r\|_{\R^{W}}^{2}\rightarrow\min!
\]
The implemented algorithm is that of \cite{BjorckGolub67}, see also
\cite[Section 5.1.2]{Bjorck96} which is sometimes called the direct
elimination method.
\item Weighting of the constraints.
In this approach, a sufficiently large
parameter $\omega>0$ is chosen and the problem (\ref{eq:discmin})
- (\ref{eq:discconstraint}) is replaced by the free minimization
problem
\[
\varphi_{\omega}(c)=\|\mathcal{A}c-r\|_{\R^{W}}^{2}+\omega\|\mathcal{C}c\|\rightarrow\min!
\]
It is known that the\footnote{Assuming a fullrank condition on $\mathcal{A}$!}
minimizer $c_{\omega}$ of $\varphi_{\omega}$ converges towards the
solution of (\ref{eq:discmin}) - (\ref{eq:discconstraint}) for $\omega\rightarrow\infty$
(cf. \cite[Section 12.1.5]{GovLo89}). Two different orderings of the equations
have been implemented. One is 
\[
\mathcal{G}=\left[\begin{array}{c}
\omega\mathcal{C}\\
\mathcal{A}
\end{array}\right],\quad\bar{r}=\left[\begin{array}{c}
0\\
r
\end{array}\right]
\]
while the other uses a block-bidiagonal structure as it is common
for collocation methods for ODEs, cf \cite{BaderAscher}.
It is known that the order of the equations in the weighting method may have a large impact on the accuracy of the solutions \cite{vLoan85}. In our test examples, however, we did not observe a difference in the behavior of both orderings.
\item The direct solution method by eliminating the constraints has often
the deficiency of generating a lot of fill-in in the intermediate
matrices. An approach to overcome this situation has been proposed
in \cite{vLoan85}. The solutions of the weighting approach are iteratively
enhanced by a defect correction process. This method is implemented
in the form presented in \cite{Barlow92,BarlowVemu92}. This form is called the deferred correction procedure for constrained leaset-squares problems by the authors. As a stopping criterion, the estimate (i) in \cite[p. 254]{BarlowVemu92} has been implemented. Additionally, a bound for the maximal numer of iterations can be provided.
Under reasonable
conditions, at most 2 iterations should be sufficient for obtaining
maximal (with respect to the sensitivity of the problem) accuracy
for the discrete solution.
\end{enumerate}
The results of the weighting method depend substantially on the choice
of the parameter $\omega$. In order to have an accurate approximation
of the exact solution $c_{\ast}$ of the problem (\ref{eq:discmin})-(\ref{eq:discconstraint}),
a large value of $\omega$ should be used (in the absence of rounding
errors). However, if $\omega$ becomes too large, the algorithm may
lack numerical stability. A discussion of this topic has been given
in \cite{vLoan85}. In particular, it turns out that the algorithm
used for the QR decomposition and the pivoting strategies have a strong
influence on the success of this method. In our implementation, we
use the sparse QR implementation of \cite{SPQR}. On the other hand,
an accuracy of the solution being much lower than the approximation
error of $x_{\pi}$ is not necessary.\footnote{The Eigen library
has its own implementation of a sparse QR factorization. The latter
turned out to be very slow compared to SPQR.} Therefore, a number of experiments have been done in order to obtain
some insight into what reasonable choices might be.

\begin{experiment}
\label{exp:10}Influence of the choice of the weighting parameter
$\omega$

We use Example~\ref{exa:LinCaMo}. Two sets of parameters are selected:
(i) $N=5$, $n=160$ and (ii) $N=20$, $n=20$. The choice (i) corresponds
to low degree polynomials with a corresponding large number of subintervals
while (ii) uses higher degree polynomials with a corresponding small
number of subintervals. Both cases have been selected according to
Table~\ref{tab:LinCaMo} in such a way that a high accuracy can be
obtained while at the same time having only a small influence of the
problem conditioning. The other parameters chosen in this experiment
are: $M=N+1$, Gauss-Legendre collocation nodes and Legendre polynomials
as basis functions. The error
in dependence of $\omega$ is measured both with respect to the exact
solution and with respect to a reference soltion obtained by the direct
solution method. The results are
provided in Tables \ref{tab:LinCaMoAlpha-1}--\ref{tab:LinCaMoAlpha-1-1}.
The results for Example \ref{exa:SingPer} below are quite similar. The results
indicate that an optimal $\omega$ may vary considerably depending
on the problem parameters. However, the accuracy against the exact
solution is rather insensitive of $\omega$. \hfill\qed
\end{experiment}

\begin{table}
\caption{Influence of parameter $\omega$ for the constraints in Example~\ref{exa:LinCaMo}
using $N=5$ and $n=160$. The error of the solution with respect
to the exact solution (A) and with respect to a discrete reference
solution obtained by a direct method (B) is given in the norms of
$L^{2}((0,5),\R^{7})$, $L^{\infty}((0,5),\R^{7})$ and $H_{D}^{1}((0,5),\R^{7})$\label{tab:LinCaMoAlpha-1}}

\centering{}%
\begin{tabular}{c|ccc|ccc}
 & \multicolumn{3}{c|}{(A)} & \multicolumn{3}{c}{(B)}\tabularnewline
$\omega$ & $L^{\infty}(0,5)$ & $L^{2}(0,5)$ & $H_{D}^{1}(0,5)$ & $L^{\infty}(0,5)$ & $L^{2}(0,5)$ & $H_{D}^{1}(0,5)$\tabularnewline
\hline 
1e-09 & 2.25e+00 & 4.54e+00 & 9.37e+00 & 2.25e+00 & 4.54e+00 & 8.04e+00\tabularnewline
1e-08 & 2.00e+00 & 4.59e+00 & 9.04e+00 & 2.00e+00 & 4.59e+00 & 9.04e+00\tabularnewline
1e-07 & 3.55e-01 & 5.83e-01 & 1.05e+00 & 3.55e-01 & 5.83e-01 & 1.05e+00\tabularnewline
1e-06 & 1.06e-05 & 1.66e-05 & 2.84e-05 & 1.06e-05 & 1.66e-05 & 2.84e-05\tabularnewline
1e-05 & 1.02e-07 & 1.60e-07 & 3.07e-07 & 1.02e-07 & 1.60e-07 & 2.75e-07\tabularnewline
1e-04 & 2.26e-08 & 1.49e-08 & 1.41e-07 & 5.51e-09 & 6.27e-09 & 3.51e-08\tabularnewline
1e-03 & 2.26e-08 & 1.53e-08 & 1.41e-07 & 5.51e-09 & 7.09e-09 & 3.54e-08\tabularnewline
1e-02 & 2.15e-08 & 1.39e-08 & 1.40e-07 & 4.44e-09 & 3.36e-09 & 3.31e-08\tabularnewline
1e-01 & 2.13e-08 & 1.39e-08 & 1.40e-07 & 4.28e-09 & 3.28e-09 & 3.29e-08\tabularnewline
1e+00 & 2.00e-08 & 1.31e-08 & 1.31e-07 & 2.99e-09 & 2.99e-09 & 2.29e-08\tabularnewline
1e+01 & 1.73e-08 & 1.12e-08 & 1.13e-07 & 1.27e-10 & 7.51e-11 & 7.53e-10\tabularnewline
1e+02 & 1.73e-08 & 1.12e-08 & 1.12e-07 & 3.64e-10 & 3.84e-11 & 3.85e-10\tabularnewline
1e+03 & 1.73e-08 & 1.12e-08 & 1.12e-07 & 2.36e-09 & 3.05e-10 & 3.06e-09\tabularnewline
1e+04 & 2.15e-08 & 1.15e-08 & 1.16e-07 & 1.82e-08 & 2.91e-09 & 2.92e-08\tabularnewline
1e+05 & 1.18e-07 & 3.27e-08 & 3.28e-07 & 1.26e-07 & 3.18e-08 & 3.20e-07\tabularnewline
1e+06 & 6.69e-06 & 5.08e-07 & 5.08e-06 & 6.68e-06 & 5.08e-07 & 5.09e-06\tabularnewline
1e+07 & 6.28e-05 & 5.09e-06 & 5.09e-05 & 6.28e-05 & 5.09e-06 & 5.09e-05\tabularnewline
1e+08 & 9.94e-05 & 2.82e-05 & 2.83e-04 & 9.94e-05 & 2.82e-05 & 2.83e-04\tabularnewline
1e+09 & 3.33e+01 & 7.87e+00 & 7.91e+01 & 3.33e+01 & 7.87e+00 & 7.91e+01\tabularnewline
1e+10 & 8.61e+01 & 5.91e+01 & 5.93e+02 & 8.61e+01 & 5.91e+01 & 5.93e+02\tabularnewline
\end{tabular}
\end{table}

\begin{table}
\caption{Influence of parameter $\omega$ for the constraints in Example~\ref{exa:LinCaMo}
using $N=20$ and $n=20$. The error of the solution with respect
to the exact solution (A) and with respect to a discrete reference
solution obtained by a direct method (B) is given in the norms of
$L^{2}((0,5),\R^{7})$, $L^{\infty}((0,5),\R^{7})$ and $H_{D}^{1}((0,5),\R^{7})$\label{tab:LinCaMoAlpha-1-1}}

\centering{}%
\begin{tabular}{c|ccc|ccc}
 & \multicolumn{3}{c|}{(A)} & \multicolumn{3}{c}{(B)}\tabularnewline
$\omega$ & $L^{\infty}(0,5)$ & $L^{2}(0,5)$ & $H_{D}^{1}(0,5)$ & $L^{\infty}(0,5)$ & $L^{2}(0,5)$ & $H_{D}^{1}(0,5)$\tabularnewline
\hline 
1e-09 & 2.44e+00 & 4.91e+00 & 7.59e+00 & 2.44e+00 & 4.91e+00 & 7.59e+00\tabularnewline
1e-08 & 4.40e-02 & 7.51e-02 & 1.31e-01 & 4.40e-02 & 7.51e-02 & 1.31e-01\tabularnewline
1e-07 & 6.38e-08 & 9.91e-08 & 1.85e-07 & 6.38e-08 & 9.91e-08 & 1.85e-07\tabularnewline
1e-06 & 1.35e-08 & 2.38e-08 & 3.80e-08 & 1.35e-08 & 2.38e-08 & 3.80e-08\tabularnewline
1e-05 & 2.76e-09 & 3.68e-09 & 6.77e-09 & 2.76e-09 & 3.68e-09 & 6.77e-09\tabularnewline
1e-04 & 1.86e-10 & 2.77e-10 & 5.11e-10 & 1.86e-10 & 2.77e-10 & 5.13e-10\tabularnewline
1e-03 & 5.12e-11 & 1.59e-11 & 5.68e-11 & 4.59e-11 & 1.60e-11 & 6.23e-11\tabularnewline
1e-02 & 2.49e-11 & 4.53e-12 & 4.29e-11 & 4.25e-11 & 5.62e-12 & 5.43e-11\tabularnewline
1e-01 & 3.63e-11 & 4.57e-12 & 4.59e-11 & 5.97e-11 & 6.32e-12 & 6.35e-11\tabularnewline
1e+00 & 6.01e-11 & 5.37e-12 & 5.40e-11 & 8.58e-11 & 7.61e-12 & 7.64e-11\tabularnewline
1e+01 & 1.51e-10 & 1.64e-11 & 1.64e-10 & 1.53e-10 & 1.60e-11 & 1.61e-10\tabularnewline
1e+02 & 4.67e-10 & 4.35e-11 & 4.37e-10 & 4.39e-10 & 4.14e-11 & 4.16e-10\tabularnewline
1e+03 & 1.29e-08 & 8.11e-10 & 8.15e-09 & 1.29e-08 & 8.13e-10 & 8.17e-09\tabularnewline
1e+04 & 1.50e-07 & 8.22e-09 & 8.26e-08 & 1.50e-07 & 8.22e-09 & 8.26e-08\tabularnewline
1e+05 & 6.26e-07 & 4.26e-08 & 4.28e-07 & 6.26e-07 & 4.26e-08 & 4.28e-07\tabularnewline
1e+06 & 1.10e-05 & 7.53e-07 & 7.57e-06 & 1.10e-05 & 7.53e-07 & 7.57e-06\tabularnewline
1e+07 & 3.43e-05 & 3.17e-06 & 3.19e-05 & 3.43e-05 & 3.17e-06 & 3.19e-05\tabularnewline
1e+08 & 1.85e-04 & 1.22e-05 & 1.23e-04 & 1.85e-04 & 1.22e-05 & 1.23e-04\tabularnewline
1e+09 & 1.77e-05 & 3.69e-06 & 3.22e-05 & 1.77e-05 & 3.69e-06 & 3.22e-05\tabularnewline
1e+10 & 6.74e+00 & 2.38e+00 & 1.47e+01 & 6.74e+00 & 2.38e+00 & 1.47e+01\tabularnewline
\end{tabular}
\end{table}

The following example is a boundary value problem incontrast to Example~\ref{exa:LinCaMo} which is an intial value problem.

\begin{example}
\label{exa:SingPer}On the interval $[0,1]$, consider the DAE
\[
\left[\begin{array}{cccccc}
1 & 0 & 0 & 0 & 0 & 0\\
0 & 1 & 0 & 0 & 0 & 0\\
0 & 0 & 0 & 0 & 0 & 0\\
0 & 0 & 1 & 0 & 0 & 0\\
0 & 0 & 0 & 1 & 0 & 0\\
0 & 0 & 0 & 0 & 1 & 0
\end{array}\right]\frac{d}{dt}\left[\begin{array}{c}
x_{1}\\
x_{2}\\
y_{1}\\
y_{2}\\
y_{3}\\
y_{4}
\end{array}\right]+\left[\begin{array}{cccccc}
0 & -\lambda & 0 & 0 & 0 & 0\\
-\lambda & 0 & 0 & 0 & 0 & 0\\
-1 & 0 & 1 & 0 & 0 & 0\\
0 & 0 & 0 & 1 & 0 & 0\\
0 & 0 & 0 & 0 & 1 & 0\\
0 & 0 & 0 & 0 & 0 & 1
\end{array}\right]\left[\begin{array}{c}
x_{1}\\
x_{2}\\
y_{1}\\
y_{2}\\
y_{3}\\
y_{4}
\end{array}\right]=\left[\begin{array}{c}
0\\
0\\
0\\
0\\
0\\
0
\end{array}\right],\quad\lambda>0,
\]
subject to the boundary conditions
\[
x_{1}(0)=x_{1}(1)=1.
\]
This DAE can be brought into the proper form (\ref{eq:DAE}) by setting
\[
A=\left[\begin{array}{ccccc}
1 & 0 & 0 & 0 & 0\\
0 & 1 & 0 & 0 & 0\\
0 & 0 & 0 & 0 & 0\\
0 & 0 & 1 & 0 & 0\\
0 & 0 & 0 & 1 & 0\\
0 & 0 & 0 & 0 & 1
\end{array}\right],\quad D=\left[\begin{array}{cccccc}
1 & 0 & 0 & 0 & 0 & 0\\
0 & 1 & 0 & 0 & 0 & 0\\
0 & 0 & 1 & 0 & 0 & 0\\
0 & 0 & 0 & 1 & 0 & 0\\
0 & 0 & 0 & 0 & 1 & 0
\end{array}\right],\quad B=\left[\begin{array}{cccccc}
0 & -\lambda & 0 & 0 & 0 & 0\\
-\lambda & 0 & 0 & 0 & 0 & 0\\
-1 & 0 & 1 & 0 & 0 & 0\\
0 & 0 & 0 & 1 & 0 & 0\\
0 & 0 & 0 & 0 & 1 & 0\\
0 & 0 & 0 & 0 & 0 & 1
\end{array}\right].
\]
This DAE has the tractability index $\mu=4$ and dynamical degree of freedom $l=2$. The solution reads
\begin{align*}
x_{\ast,1}(t) & =\frac{e^{-\lambda t}(e^{\lambda}+e^{2\lambda t})}{1+e^{\lambda}}\\
x_{\ast,2}(t) & =\frac{e^{-\lambda t}(-e^{\lambda}+e^{2\lambda t})}{1+e^{\lambda}}\\
y_{\ast,1}(t) & =\frac{e^{-\lambda t}(e^{\lambda}+e^{2\lambda t})}{1+e^{\lambda}}\\
y_{\ast,2}(t) & =\lambda\frac{e^{-\lambda t}(-e^{\lambda}+e^{2\lambda t})}{1+e^{\lambda}}\\
y_{\ast,3}(t) & =\lambda^{2}\frac{e^{-\lambda t}(e^{\lambda}+e^{2\lambda t})}{1+e^{\lambda}}\\
y_{\ast,4}(t) & =\lambda^{3}\frac{e^{-\lambda t}(-e^{\lambda}+e^{2\lambda t})}{1+e^{\lambda}}
\end{align*}

$\qed$
\end{example}

The iterative solver using defect corrections may overcome the difficulties
connected with a suitable choice of the parameter $\omega$ in the
weighting method. According to Experiment 10, we would expect the
optimal $\omega$ to be in the order of magnitude $10^{-3}\ldots10^{+2}$
with an optimum around $10^{-2}$. This is in contrast to the recommendations
given in \cite{BarlowVemu92} where a choice of $\omega\approx\varepsilon_{\text{mach}}^{-1/3}$
is recommended for the deferred correction algorithm. We test the performance of the deferred correction solver in the next experiment. Here, the tolerance in the convergence check is set to $10^{-15}$. The iterations are considered not to converge if the convergence check has failed after two iterations.

\begin{experiment}
\label{exp:11}We check the performance of the deferred correction
solver in dependence of the weight parameter $\omega$. Both Examples
\ref{exa:LinCaMo} and \ref{exa:SingPer} are used. The results are
presented in Tables \ref{tab:def} -- \ref{tab:def3}. The results
indicate that a larger value for $\omega$ seems to be preferable.
\hfill\qed
\end{experiment}
\begin{table}

\caption{Influence of the parameter $\omega$ on the accuracy of th discrete
solution for Example \ref{exa:LinCaMo} using $N=5$ and $n=160$.
The error of the solution with respect to the exact solution (A) and
with respect to a discrete reference solution obtained by a direct
method (B) is given in the norms of $L^{2}((0,5),\R^{7})$, $L^{\infty}((0,5),\R^{7})$
and $H_{D}^{1}((0,5),\R^{7})$. 2 iterations are applied\label{tab:def}}

\begin{minipage}{\linewidth}
\renewcommand{\thefootnote}{\theempfootnote}
\begin{centering}
\begin{tabular}{c|ccc|ccc}
 & \multicolumn{3}{c|}{(A)} & \multicolumn{3}{c}{(B)}\tabularnewline
$\omega$ & $L^{\infty}(0,5)$ & $L^{2}(0,5)$ & $H_{D}^{1}(0,5)$ & $L^{\infty}(0,5)$ & $L^{2}(0,5)$ & $H_{D}^{1}(0,5)$\tabularnewline
\hline 
0.01\footnote{Iteration did not converge} & 2.13e-08 & 1.39e-08 & 1.40e-07 & 4.30e-09 & 3.30e-09 & 3.31e-08\tabularnewline
10 & 1.73e-08 & 1.12e-08 & 1.12e-07 & 5.43e-11 & 1.62e-11 & 1.63e-10\tabularnewline
$\varepsilon_{\text{mach}}^{-1/3}$ & 1.73e-08 & 1.12e-08 & 1.12e-07 & 5.42e-11 & 1.62e-11 & 1.63e-10\tabularnewline
\end{tabular}

\end{centering}
\end{minipage}
\end{table}

\begin{table}
\caption{Influence of the parameter $\omega$ on the accuracy of th discrete
solution for Example \ref{exa:LinCaMo} using $N=20$ and $n=20$.
The error of the solution with respect to the exact solution (A) and
with respect to a discrete reference solution obtained by a direct
method (B) is given in the norms of $L^{2}((0,5),\R^{7})$, $L^{\infty}((0,5),\R^{7})$
and $H_{D}^{1}((0,5),\R^{7})$. 2 iterations are applied\label{tab:def1}}

\begin{minipage}{\linewidth}
\renewcommand{\thefootnote}{\theempfootnote}
\centering{}%
\begin{tabular}{c|ccc|ccc}
 & \multicolumn{3}{c|}{(A)} & \multicolumn{3}{c}{(B)}\tabularnewline
$\omega$ & $L^{\infty}(0,5)$ & $L^{2}(0,5)$ & $H_{D}^{1}(0,5)$ & $L^{\infty}(0,5)$ & $L^{2}(0,5)$ & $H_{D}^{1}(0,5)$\tabularnewline
\hline 
0.01\footnote{Iteration did not converge} & 2.20e-11 & 3.35e-12 & 3.37e-11 & 6.25e-11 & 6.67e-12 & 6.70e-11\tabularnewline
10 & 1.79e-11 & 1.98e-12 & 1.99e-11 & 6.26e-11 & 6.91e-12 & 6.95e-11\tabularnewline
$\varepsilon_{\text{mach}}^{-1/3}$ & 1.11e-11 & 1.71e-12 & 1.72e-11 & 6.26e-11 & 6.94e-12 & 6.97e-11\tabularnewline
\end{tabular}
\end{minipage}
\end{table}

\begin{table}
\caption{Influence of the parameter $\omega$ on the accuracy of the discrete
solution for Example \ref{exa:SingPer} using $N=20$ and $n=5$.
The error of the solution with respect to the exact solution (A) and
with respect to a discrete reference solution obtained by a direct
method (B) is given in the norms of $L^{2}((0,5),\R^{6})$, $L^{\infty}((0,5),\R^{6})$
and $H_{D}^{1}((0,5),\R^{6})$. 2 iterations are applied\label{tab:def2}}

\centering{}%
\begin{tabular}{c|ccc|ccc}
 & \multicolumn{3}{c|}{(A)} & \multicolumn{3}{c}{(B)}\tabularnewline
$\omega$ & $L^{\infty}(0,5)$ & $L^{2}(0,5)$ & $H_{D}^{1}(0,5)$ & $L^{\infty}(0,5)$ & $L^{2}(0,5)$ & $H_{D}^{1}(0,5)$\tabularnewline
\hline 
0.01 & 8.25e-08 & 6.17e-09 & 8.72e-09 & 2.52e-06 & 1.55e-07 & 2.20e-07\tabularnewline
10 & 2.73e-07 & 1.41e-08 & 2.00e-08 & 2.63e-06 & 1.61e-07 & 2.27e-07\tabularnewline
$\varepsilon_{\text{mach}}^{-1/3}$ & 3.84e-09 & 3.61e-10 & 5.11e-10 & 2.45e-06 & 1.56e-07 & 2.20e-07\tabularnewline
\end{tabular}
\end{table}
\begin{table}
\caption{Influence of the parameter $\omega$ on the accuracy of th discrete
solution for Example \ref{exa:SingPer} using $N=5$ and $n=20$.
The error of the solution with respect to the exact solution (A) and
with respect to a discrete reference solution obtained by a direct
method (B) is given in the norms of $L^{2}((0,5),\R^{6})$, $L^{\infty}((0,5),\R^{6})$
and $H_{D}^{1}((0,5),\R^{6})$. 2 iterations are applied\label{tab:def3}}

\begin{minipage}{\linewidth}
\renewcommand{\thefootnote}{\theempfootnote}
\centering{}%
\begin{tabular}{c|ccc|ccc}
 & \multicolumn{3}{c|}{(A)} & \multicolumn{3}{c}{(B)}\tabularnewline
$\omega$ & $L^{\infty}(0,5)$ & $L^{2}(0,5)$ & $H_{D}^{1}(0,5)$ & $L^{\infty}(0,5)$ & $L^{2}(0,5)$ & $H_{D}^{1}(0,5)$\tabularnewline
\hline 
0.01\footnote{Iteration did not converge} & 1.41e-06 & 4.59e-07 & 6.49e-07 & 3.75e-08 & 4.23e-09 & 5.98e-09\tabularnewline
10 & 1.39e-06 & 4.59e-07 & 6.49e-07 & 1.42e-08 & 2.63e-09 & 3.71e-09\tabularnewline
$\varepsilon_{\text{mach}}^{-1/3}$ & 1.39e-06 & 4.59e-07 & 6.49e-07 & 1.71e-08 & 2.83e-09 & 4.00e-09\tabularnewline
\end{tabular}
\end{minipage}
\end{table}

\subsection{Performance of the linear solvers}

In this section, we intend to provide some insight into the behavior of the linear solvers. This concerns both the accuracy as well as the computational resources (computation time, memory consumption). All these data are highly implementation dependent. Also the hardware architecture plays an important role.

The linear solvers have been implemented using the standard strategy of subdividing them into a factorization step and a solve step. The price to pay is a larger memory consumption. However, their use in the context of, e.g., a modified Newton method may decrease the computation time considerably.

The tests have benn run on a Linux laptop Dell Latitude E5550. While the program is a pure sequential one, the MKL library may use shared memory parallel versions of their BLAS and LAPACK routines. The CPU of the machine is an Intel(R) Core(TM) i7-5600U CPU @ 2.60GHz providing two cores, each of them capable of hyperthreading. For the test runs, cpu throttling has been disabled such that all cores ran at roughly 3.2 GHz.

The parameter for the weighting solver is $\omega=1$ while the corresponding parameter for the deferred correction solver is $\omega=\epsilon_{\textrm{mach}}^{-1/3}\approx 1.65\times 10^5$. These parameters have been chosen since they seem to be best suited for the examples. The test cases (combination of $N$ and $n$) have been selected by choosing the best combinations in Tables~\ref{tab:LinCaMo} and \ref{tab:LinCaMo_C}, respectively.

\begin{experiment} \label{exp:12}
First, we consider Example~\ref{exa:LinCaMo}. For all values of $N$, $M=N+1$ Gauss-Legendre nodes have been used. The characteristics of the test cases using Legendre basis functions are provided in Table~\ref{tab:ex12char}. For the special properties of the Legendre polynomials, the matrix $\mathcal{C}$ representing the constraints is extremly sparse featuring only three nonzero elements per row. The computational results are shown in Table~\ref{tab:ex12rt}.
In the next computations, the Chebyshev basis has been used which leads to a slightly more occupied matrix $\mathcal{C}$. The results are provided in Tables~\ref{tab:ex12Cchar} and \ref{tab:ex12Crt}.
\end{experiment}

\begin{table}
\caption{Case characteristics for Experiment~\ref{exp:12} using the Legendre basis. The number of nonzero elements in the matrices $\mathcal{A}$ and $\mathcal{C}$ are provided as reported by the functions of the Eigen library. The columns denote: the number of rows of $\mathcal{A}$
(\texttt{dimA}), the number of rows of $\mathcal{C}$ (\texttt{dimC}), the number of unknowns (\texttt{nun}), the number of nonzero elements of $\mathcal{C}$ (\texttt{nnzC}), the number of nonzero elements of $\mathcal{A}$ (\texttt{nnzA}) for the functional $\Phi^R_{\pi,M}$ and $\Phi^C_{\pi,M}$, respectively\label{tab:ex12char}}

\centering{}%
\begin{tabular}{c|rrrrrr|r|r}
 & & & & & & & $\Phi_{\pi,M}^R$ & $\Phi_{\pi,M}^C$ \\
case & $N$ & $n$ & \texttt{dimA} & \texttt{dimC} & \texttt{nun}
& \texttt{nnzC} & \texttt{nnzA} & \texttt{nnzA} \\
\hline
1 &  3 & 320 & 8964 & 1914 & 8640 & 5742 & 101124 & 101124 \\
2 &  5 &  80 & 3364 &  474 & 3280 & 1422 &  58964 &  59044 \\
3 & 10 &   5 &  389 &   24 &  380 &   72 &  12749 &  12334 \\
4 & 20 &   5 &  739 &   24 &  730 &   72 &  47509 &  46534 \\
\end{tabular}

\end{table}

\begin{table}
\caption{Computing times, permanent workspace needed, and error for the cases described in Table~\ref{tab:ex12char}. The computing times are provided in milliseconds. They are the average of 100 runs of each case. The error is measured in the norm of $H^1_D((0,5),\R^7)$.
The column headings denote: The upper bound on the number of nonzero elements of the $QR$-factors as reported by SPQR (\texttt{nWork}), the time for the matrix assembly (\texttt{tass}), the time for the factorization (\texttt{afact}), and the time for the solution (\texttt{tslv}) for both functionals $\Phi^R_{\pi,M}$ and $\Phi^C_{\pi,M}$
\label{tab:ex12rt}}

\centering{}%
\setlength{\tabcolsep}{3pt}
\begin{tabular}{cl|rrrrc|rrrrc}
 & & \multicolumn{5}{c|}{$\Phi_{\pi,M}^R$} & \multicolumn{5}{c}{$\Phi_{\pi,M}^C$} \\
case & solver & \texttt{nWork} & \texttt{tass} & \texttt{tfact} & \texttt{tslv} & error
& \texttt{nWork} & \texttt{tass} & \texttt{tfact} & \texttt{tslv} & error \\
\hline
1 & direct   & 221829 & 12 & 156 &  4 & 6.74e-04 & 221829 & 12 & 158 &  4 & 6.44e-04 \\
  & weighted & 309438 & 13 &  17 &  6 & 1.15e-03 & 309438 & 13 &  19 &  6 & 6.93e-04 \\
  & deferred & 309438 & 14 &  18 & 16 & 6.74e-04 & 309438 & 13 &  18 & 17 & 6.44e-04 \\
\hline
2 & direct   & 115932 & 12 &  50 &  4 & 9.02e-07 & 116168 &  5 &  25 &  2 & 8.50e-07 \\
  & weighted & 155334 & 14 &  17 &  6 & 1.05e-06 & 155370 &  6 &   8 &  3 & 8.95e-07 \\
  & deferred & 155334 & 14 &  16 & 14 & 9.02e-07 & 155370 &  6 &   8 &  7 & 8.50e-07 \\
\hline
3 & direct   &  24233 &  2 &   4 & 1 & 8.80e-08 &   24967 &  1 &    2 &  0 & 6.59e-08 \\
  & weighted &  26810 &  2 &   3 & 1 & 9.62e-08 &   27028 &  1 &    1 &  0 & 8.00e-08 \\
  & deferred &  26810 &  2 &   3 & 2 & 8.80e-08 &   27028 &  1 &    1 &  1 & 6.59e-08 \\
\hline
4 & direct   &  90277 &  9 &   2 & 2 & 4.47e-12 &   90052 &  1 &    1 &  2 & 5.17e-12 \\
  & weighted &  96544 & 11 &  10 & 3 & 7.44e-12 &   97857 &  9 &   11 &  3 & 5.28e-12 \\
  & deferred &  96544 & 11 &  10 & 6 & 2.17e-12 &   97857 &  9 &   10 &  5 & 2.08e-12 \\
\end{tabular}
\end{table}

\begin{table}
\caption{Case characteristics for Experiment~\ref{exp:12} using the Chebyshev basis. The number of nonzero elements in the matrices $\mathcal{A}$ and $\mathcal{C}$ are provided as reported by the functions of the Eigen library. The columns denote: the number of rows of $\mathcal{A}$
(\texttt{dimA}), the number of rows of $\mathcal{C}$ (\texttt{dimC}), the number of unknowns (\texttt{nun}), the number of nonzero elements of $\mathcal{C}$ (\texttt{nnzC}), the number of nonzero elements of $\mathcal{A}$ (\texttt{nnzA}) for the functional $\Phi^R_{\pi,M}$ and $\Phi^C_{\pi,M}$, respectively\label{tab:ex12Cchar}}

\centering{}%
\begin{tabular}{c|rrrrrr|r|r}
 & & & & & & & $\Phi_{\pi,M}^R$ & $\Phi_{\pi,M}^C$ \\
case & $N$ & $n$ & \texttt{dimA} & \texttt{dimC} & \texttt{nun}
& \texttt{nnzC} & \texttt{nnzA} & \texttt{nnzA} \\
\hline
1 &  3 & 320 & 8964 & 1914 & 8640 & 7656 & 101128 & 101124 \\
2 &  5 &  80 & 3364 &  474 & 3280 & 3318 &  58851 &  59056 \\
3 & 10 &   5 &  389 &   24 &  380 &  360 &  12846 &  12626 \\
4 & 20 &   5 &  739 &   24 &  730 &  720 &  47581 &  47191 \\
\end{tabular}

\end{table}

\begin{table}
\caption{Computing times, permanent workspace needed, and error for the cases described in Table~\ref{tab:ex12Cchar}. The computing times are provided in milliseconds. They are the average of 100 runs of each case. The error is measured in the norm of $H^1_D((0,5),\R^7)$.
The column headings denote: The upper bound on the number of nonzero elements of the $QR$-factors as reported by SPQR (\texttt{nWork}), the time for the matrix assembly (\texttt{tass}), the time for the factorization (\texttt{afact}), and the time for the solution (\texttt{tslv}) for both functionals $\Phi^R_{\pi,M}$ and $\Phi^C_{\pi,M}$
\label{tab:ex12Crt}}

\setlength{\tabcolsep}{3pt}
\centering{}%
\begin{tabular}{cl|rrrrc|rrrrc}
 & & \multicolumn{5}{c|}{$\Phi_{\pi,M}^R$} & \multicolumn{5}{c}{$\Phi_{\pi,M}^C$} \\
case & solver & \texttt{nWork} & \texttt{tass} & \texttt{tfact} & \texttt{tslv} & error
& \texttt{nWork} & \texttt{tass} & \texttt{tfact} & \texttt{tslv} & error \\
\hline
1 & direct   & 334564 & 12 & 161 &  6 & 6.74e-04 & 329266 & 15 & 163 &  6 & 6.44e-04 \\
  & weighted & 367514 & 14 &  21 &  8 & 1.15e-03 & 358591 & 15 &  23 &  8 & 6.93e-04 \\
  & deferred & 367514 & 13 &  21 & 21 & 6.74e-04 & 358591 & 15 &  22 & 22 & 6.44e-04 \\
\hline
2 & direct   & 231988 & 12 &  61 &  7 & 9.02e-07 & 231962 &  5 &  30 &  4 & 8.50e-07 \\
  & weighted & 204243 & 14 &  23 &  8 & 1.05e-06 & 201128 &  6 &  11 &  4 & 8.95e-07 \\
  & deferred & 204243 & 14 &  23 & 21 & 9.02e-07 & 201128 &  6 &  11 & 10 & 8.50e-07 \\
\hline
3 & direct   &  51343 &  2 &   7 & 1 & 8.80e-08 &   51565 &  2 &   7 &  1 & 6.59e-08 \\
  & weighted &  60861 &  2 &   5 & 1 & 9.62e-08 &   61376 &  2 &   5 &  1 & 8.00e-08 \\
  & deferred &  60861 &  3 &   5 & 3 & 8.80e-08 &   61376 &  2 &   5 &  3 & 6.59e-08 \\
\hline
4 & direct   & 208910 &  9 &   3 & 4 & 5.78e-12 &  230195 &  7 &  28 &  5 & 5.17e-12 \\
  & weighted & 164558 & 11 &  15 & 4 & 5.37e-12 &  167836 &  9 &  15 &  3 & 4.75e-12 \\
  & deferred & 164558 & 11 &  15 & 8 & 2.71e-12 &  167836 & 10 &  15 &  8 & 2.23e-12 \\
\end{tabular}

\end{table}

The previous example is an initial value problem. This structure may have consequences on the performance of the linear solvers. Therefore, in the next experiment, we consider a boundary value problem.

\begin{experiment} \label{exp:13}
We repeat Experiment~\ref{exp:12} with Example~\ref{exa:SingPer}. The problem characteristics and computational results are provided in Tables~\ref{tab:ex13char} -- \ref{tab:ex13Crt}. It should be noted that the deferred correction solver returned normally (tolerance as before $10^{-15}$) after at most two iterations in all cases. However, in some cases, the results are completely off. This happens, for example,
in Tables~\ref{tab:ex13rt} and \ref{tab:ex13Crt}, cases 1 and 2, for $\Phi_{\pi,M}^C$.
\end{experiment}

\begin{table}
\caption{Case characteristics for Experiment~\ref{exp:13} using the Legendre basis. The number of nonzero elements in the matrices $\mathcal{A}$ and $\mathcal{C}$ are provided as reported by the functions of the Eigen library. The columns denote: the number of rows of $\mathcal{A}$
(\texttt{dimA}), the number of rows of $\mathcal{C}$ (\texttt{dimC}), the number of unknowns (\texttt{nun}), the number of nonzero elements of $\mathcal{C}$ (\texttt{nnzC}), the number of nonzero elements of $\mathcal{A}$ (\texttt{nnzA}) for the functional $\Phi^R_{\pi,M}$ and $\Phi^C_{\pi,M}$, respectively\label{tab:ex13char}}

\centering{}%
\begin{tabular}{c|rrrrrr|r|r}
 & & & & & & & $\Phi_{\pi,M}^R$ & $\Phi_{\pi,M}^C$ \\
case & $N$ & $n$ & \texttt{dimA} & \texttt{dimC} & \texttt{nun}
& \texttt{nnzC} & \texttt{nnzA} & \texttt{nnzA} \\
\hline
1 &  4 & 320 & 9602 & 1595 & 9280 & 4785 &  86403 &  80643 \\
2 &  5 & 160 & 5762 &  795 & 5600 & 1422 &  63363 &  63363 \\
3 & 10 &   5 &  332 &   20 &  325 &   60 &   6933 &   6663 \\
4 & 20 &   5 &  632 &   20 &  625 &   60 &  25793 &  25263 \\
\end{tabular}

\end{table}

\begin{table}
\caption{Computing times, permanent workspace needed, and error for the cases described in Table~\ref{tab:ex13char}. The computing times are provided in milliseconds. They are the average of 100 runs of each case. The error is measured in the norm of $H^1_D((0,1),\R^6)$.
The column headings denote: The upper bound on the number of nonzero elements of the $QR$-factors as reported by SPQR (\texttt{nWork}), the time for the matrix assembly (\texttt{tass}), the time for the factorization (\texttt{afact}), and the time for the solution (\texttt{tslv}) for both functionals $\Phi^R_{\pi,M}$ and $\Phi^C_{\pi,M}$
\label{tab:ex13rt}}

\setlength{\tabcolsep}{3pt}
\centering{}%
\begin{tabular}{cl|rrrrc|rrrrc}
 & & \multicolumn{5}{c|}{$\Phi_{\pi,M}^R$} & \multicolumn{5}{c}{$\Phi_{\pi,M}^C$} \\
case & solver & \texttt{nWork} & \texttt{tass} & \texttt{tfact} & \texttt{tslv} & error
& \texttt{nWork} & \texttt{tass} & \texttt{tfact} & \texttt{tslv} & error \\
\hline
1 & direct   & 437085 & 14 & 164 &  8 & 1.58e-04 & 397127 & 13 & 158 &  7 & 1.24e-04 \\
  & weighted & 235746 & 14 &  16 &  5 & 8.22e-05 & 341713 &  7 &  22 &  7 & 2.07e-05 \\
  & deferred & 235746 & 14 &  16 & 13 & 5.53e-02 & 341713 & 14 &  21 & 25 & \textbf{9.09e+02} \\
\hline
2 & direct   & 348742 & 17 & 124 & 12 & 2.58e-05 & 348742 & 15 & 123 & 12 & 1.57e-05 \\
  & weighted & 153062 &  9 &   9 &  3 & 9.29e-07 & 153062 &  9 &   9 &  3 & 7.75e-06 \\
  & deferred & 153062 & 10 &   9 &  8 & 1.38e-01 & 153062 &  9 &  10 & 10 & \textbf{1.47e-01} \\
\hline
3 & direct   &  11617 &  1 &   3 & 0 & 8.04e-10 &   12155 &  1 &    2 &  0 & 1.06e-09 \\
  & weighted &  12400 &  2 &   2 & 1 & 1.26e-09 &   12141 &  1 &    1 &  0 & 5.52e-09 \\
  & deferred &  12400 &  2 &   2 & 1 & 4.18e-11 &   12141 &  1 &    1 &  1 & 5.08e-09 \\
\hline
4 & direct   &  46847 &  6 &   9 & 1 & 7.24e-08 &   46883 &  2 &    4 &  7 & 3.54e-07 \\
  & weighted &  42947 &  7 &   6 & 2 & 1.42e-07 &   42859 &  3 &    3 &  1 & 1.71e-07 \\
  & deferred &  42947 &  6 &   6 & 4 & 5.27e-09 &   42859 &  3 &    3 &  2 & 1.51e-07 \\
\end{tabular}

\end{table}

\begin{table}
\caption{Case characteristics for Experiment~\ref{exp:13} using the Chebyshev basis. The number of nonzero elements in the matrices $\mathcal{A}$ and $\mathcal{C}$ are provided as reported by the functions of the Eigen library. The columns denote: the number of rows of $\mathcal{A}$
(\texttt{dimA}), the number of rows of $\mathcal{C}$ (\texttt{dimC}), the number of unknowns (\texttt{nun}), the number of nonzero elements of $\mathcal{C}$ (\texttt{nnzC}), the number of nonzero elements of $\mathcal{A}$ (\texttt{nnzA}) for the functional $\Phi^R_{\pi,M}$ and $\Phi^C_{\pi,M}$, respectively\label{tab:ex13Cchar}}

\centering{}%
\begin{tabular}{c|rrrrrr|r|r}
 & & & & & & & $\Phi_{\pi,M}^R$ & $\Phi_{\pi,M}^C$ \\
case & $N$ & $n$ & \texttt{dimA} & \texttt{dimC} & \texttt{nun}
& \texttt{nnzC} & \texttt{nnzA} & \texttt{nnzA} \\
\hline
1 &  4 & 320 & 9602 & 1595 & 9280 & 7656 &  86406 &  82566 \\
2 &  5 & 160 & 5762 &  795 & 5600 & 5565 &  63367 &  63367 \\
3 & 10 &   5 &  332 &   20 &  325 &  300 &   6945 &   6795 \\
4 & 20 &   5 &  632 &   20 &  325 &  600 &  25830 &  25560 \\
\end{tabular}

\end{table}

\begin{table}
\caption{Computing times, permanent workspace needed, and error for the cases described in Table~\ref{tab:ex13Cchar}. The computing times are provided in milliseconds. They are the average of 100 runs of each case. The error is measured in the norm of $H^1_D((0,1),\R^6)$.
The column headings denote: The upper bound on the number of nonzero elements of the $QR$-factors as reported by SPQR (\texttt{nWork}), the time for the matrix assembly (\texttt{tass}), the time for the factorization (\texttt{afact}), and the time for the solution (\texttt{tslv}) for both functionals $\Phi^R_{\pi,M}$ and $\Phi^C_{\pi,M}$
\label{tab:ex13Crt}}

\setlength{\tabcolsep}{3pt}
\centering{}%
\begin{tabular}{cl|rrrrc|rrrrc}
 & & \multicolumn{5}{c|}{$\Phi_{\pi,M}^R$} & \multicolumn{5}{c}{$\Phi_{\pi,M}^C$} \\
case & solver & \texttt{nWork} & \texttt{tass} & \texttt{tfact} & \texttt{tslv} & error
& \texttt{nWork} & \texttt{tass} & \texttt{tfact} & \texttt{tslv} & error \\
\hline
1 & direct   & 796757 & 27 & 360 & 28 & 6.77e-05 & 807507 & 26 & 363 & 29 & 1.77e-04 \\
  & weighted & 502962 & 16 &  28 & 11 & 1.90e-06 & 471966 & 16 &  29 & 11 & 1.11e-05 \\
  & deferred & 502962 & 15 &  28 & 27 & 2.33e-07 & 471966 & 15 &  29 & 35 & \textbf{1.19e+02} \\
\hline
2 & direct   & 513054 & 17 & 143 & 16 & 3.59e-05 & 513054 & 15 & 143 & 17 & 2.37e-05 \\
  & weighted & 347439 & 10 &  19 &  7 & 8.73e-07 & 347439 &  9 &  19 &  7 & 4.71e-06 \\
  & deferred & \multicolumn{5}{c}{Solver failed} & 347439 &  9 &  20 & 25 & \textbf{5.07e+02} \\
\hline
3 & direct   &  29347 &  2 &   4 & 1 & 2.69e-09 &   30843 &  1 &   3 &  1 & 1.40e-09 \\
  & weighted &  25392 &  2 &   3 & 1 & 5.10e-10 &   26984 &  1 &   1 &  0 & 8.52e-10 \\
  & deferred &  25392 &  2 &   2 & 2 & 4.41e-11 &   26984 &  1 &   1 &  1 & 1.22e-09 \\
\hline
4 & direct   & 122665 &  6 &  16 & 3 & 6.70e-08 &  148882 &  5 &  18 &  4 & 6.68e-07 \\
  & weighted & 109429 &  7 &  10 & 3 & 5.22e-08 &  109345 &  6 &  10 &  3 & 5.43e-08 \\
  & deferred & 109429 &  7 &  11 & 7 & 6.09e-11 &  109345 &  6 &  11 &  7 & 2.62e-09 \\
\end{tabular}

\end{table}

It should be noted that a considerable amount of memory for the QR-factorizations is consumed by the internal representation of the Q-factor in SPQR. This can be avoided if the factorization and solution steps are intervowen.

\subsection{Sensitivity of boundary condition weighting}\label{subsec:Linear-DAEs}
As already known for boundary value problems for ODEs and index-1 DAEs, a special problem is the scaling of the boundary condition, and hence, here the inclusion of the boundary conditions (\ref{eq:BC}).
Their scaling is independent of the scaling of the DAE (\ref{eq:DAE}).
Therefore, it seems to be reasonable to provide an additional possibility
for the scaling of the boundary conditions. We decided to enable this
by introducing an additional parameter $\alpha$ to be chosen by the
user. So, $\Phi$ from (\ref{eq:Phi}) is replaced by the functional
\[
\tilde{\Phi}(x)=\int_{a}^{b}|A(t)(Dx)'(t)+B(t)x(t)-q(t)|^{2}{\rm dt}
+\alpha |G_ax(a)+G_bx(b)
-d|^{2}.
\]
Analogously, the discretized versions $\Phi_{\pi,M}^{R}$, $\Phi_{\pi,M}^{I}$
and $\Phi_{\pi,M}^{C}$ are replaced by their counterparts $\tilde{\Phi}_{\pi,M}^{R}$,
$\tilde{\Phi}_{\pi,M}^{I}$ and $\tilde{\Phi}_{\pi,M}^{C}$ with weighted
boundary conditions. The convergence theorems will hold true for these
modifications of the functional, too.
\begin{experiment}
\label{exp:8}Influence of $\alpha$ on the accuracy

We use the example and settings of Experiment~\ref{exp:10}.
The results are provided in Table~\ref{tab:LinCaMoAlpha}.
\hfill\qed
\end{experiment}
\begin{table}

\caption{Influence of weight parameter $\alpha$ for the boundary conditions
in Example~\ref{exa:LinCaMo}. The error of the solution is given
in the norms of $L^{2}((0,5),\R^{7})$, $L^{\infty}((0,5),\R^{7})$
and $H_{D}^{1}((0,5),\R^{7})$\label{tab:LinCaMoAlpha}}

\centering{}%
\begin{tabular}{c|ccc|ccc}
 & \multicolumn{3}{c|}{$N=5$, $n=160$} & \multicolumn{3}{c}{$N=20$, $n=20$}\tabularnewline
$\alpha$ & $L^{\infty}(0,5)$ & $L^{2}(0,5)$ & $H_{D}^{1}(0,5)$ & $L^{\infty}(0,5)$ & $L^{2}(0,5)$ & $H_{D}^{1}(0,5)$\tabularnewline
\hline 
1e-10 & 3.18e+00 & 7.03e+00 & 1.21e+01 & 1.60e+00 & 3.10e+00 & 5.09e+00\tabularnewline
1e-09 & 9.33e-07 & 2.33e-06 & 3.84e-06 & 1.60e+00 & 3.10e+00 & 5.09e+00\tabularnewline 
1e-08 & 1.58e-07 & 3.52e-07 & 6.16e-07 & 1.05e-07 & 1.94e-07 & 3.54e-07\tabularnewline
1e-07 & 1.27e-07 & 1.39e-08 & 3.26e-08 & 5.06e-09 & 1.10e-08 & 2.00e-08\tabularnewline
1e-06 & 7.17e-08 & 2.20e-09 & 1.68e-08 & 9.60e-10 & 2.29e-09 & 4.10e-09\tabularnewline
1e-05 & 9.60e-08 & 1.59e-09 & 1.58e-08 & 7.64e-11 & 2.07e-10 & 3.80e-10\tabularnewline
1e-04 & 6.99e-08 & 1.59e-09 & 1.60e-08 & 5.00e-11 & 4.07e-11 & 9.26e-11\tabularnewline
1e-03 & 9.83e-08 & 1.82e-09 & 1.83e-08 & 3.91e-11 & 6.41e-12 & 5.46e-11\tabularnewline
1e-02 & 1.15e-07 & 2.28e-09 & 2.29e-08 & 6.37e-11 & 6.26e-12 & 6.25e-11\tabularnewline
1e-01 & 6.43e-08 & 1.27e-09 & 1.27e-08 & 5.11e-11 & 6.61e-12 & 6.64e-1\tabularnewline
1e+00 & 6.04e-08 & 1.13e-09 & 1.13e-08 & 6.66e-11 & 7.50e-12 & 7.54e-11\tabularnewline
1e+01 & 2.15e-07 & 3.40e-09 & 3.42e-08 & 7.97e-11 & 9.85e-12 & 9.89e-11\tabularnewline
1e+02 & 4.12e-07 & 5.66e-09 & 5.68e-08 & 6.78e-11 & 8.10e-12 & 8.14e-11\tabularnewline
1e+03 & 4.51e-06 & 5.74e-08 & 5.76e-07 & 9.60e-11 & 9.81e-12 & 9.85e-11\tabularnewline
1e+04 & 2.31e-05 & 2.93e-07 & 2.95e-06 & 2.24e-09 & 1.52e-10 & 1.52e-09\tabularnewline
1e+05 & 4.68e-04 & 5.94e-06 & 5.97e-05 & 2.91e-08 & 1.35e-09 & 1.36e-08\tabularnewline
1e+06 & 2.12e+03 & 5.16e+01 & 5.19e+02 & 2.34e-07 & 1.68e-08 & 1.68e-07\tabularnewline
1e+07 & 6.53e+03 & 1.03e+02 & 1.04e+03 & 2.97e-06 & 1.77e-07 & 1.77e-06\tabularnewline
1e+08 & 4.60e+02 & 1.78e+01 & 1.79e+02 & 4.76e-06 & 3.72e-07 & 3.73e-06\tabularnewline
1e+09 & 2.05e+01 & 3.27e+00 & 3.24e+01 & 4.56e+01 & 4.90e+00 & 4.91e+01\tabularnewline
\end{tabular}
\end{table}

\begin{experiment}
\label{exp:9}Influence of $\alpha$ on the accuracy

We repeat the previous experiment with Example~\ref{exa:SingPer}.
The discretization parameters are (i) $N=5$, $n=20$ and (ii) $N=20$,
$n=5$. All other settings correspond to those of Experiment \ref{exp:8}.
The results are presented in Table~\ref{tab:SingPerAlpha}. \hfill\qed
\end{experiment}
\begin{table}
\caption{Influence of weight parameter $\alpha$ for the boundary conditions
in Example~\ref{exa:SingPer}. The error of the solution is given
in the norms of $L^{2}((0,1),\R^{6})$, $L^{\infty}((0,1),\R^{6})$
and $H_{D}^{1}((0,1),\R^{6})$\label{tab:SingPerAlpha}}

\centering{}%
\begin{tabular}{c|ccc|ccc}
 & \multicolumn{3}{c|}{$N=5$, $n=20$} & \multicolumn{3}{c}{$N=20$, $n=5$}\tabularnewline
$\alpha$ & $L^{\infty}(0,1)$ & $L^{2}(0,1)$ & $H_{D}^{1}(0,1)$ & $L^{\infty}(0,1)$ & $L^{2}(0,1)$ & $H_{D}^{1}(0,1)$\tabularnewline
\hline 
1e-10 & 4.21e-02 & 7.02e-02 & 9.13e-02 & 1.03e-06 & 8.55e-08 & 1.21e-07\tabularnewline
1e-09 & 4.46e-04 & 7.38e-04 & 9.60e-04 & 1.00e-06 & 6.11e-08 & 8.64e-08\tabularnewline
1e-08 & 4.40e-06 & 6.71e-06 & 8.80e-06 & 1.14e-06 & 6.48e-08 & 9.16e-08\tabularnewline
1e-07 & 1.47e-06 & 4.87e-07 & 6.88e-07 & 9.84e-07 & 6.02e-08 & 8.51e-08\tabularnewline
1e-06 & 1.39e-06 & 4.59e-07 & 6.49e-07 & 1.67e-06 & 1.10e-07 & 1.56e-07\tabularnewline
1e-05 & 1.40e-06 & 4.59e-07 & 6.49e-07 & 1.19e-06 & 8.21e-08 & 1.16e-07\tabularnewline
1e-04 & 1.40e-06 & 4.59e-07 & 6.49e-07 & 8.55e-07 & 6.48e-08 & 9.17e-08\tabularnewline
1e-03 & 1.40e-06 & 4.59e-07 & 6.49e-07 & 1.44e-06 & 1.04e-07 & 1.47e-07\tabularnewline
1e-02 & 1.40e-06 & 4.59e-07 & 6.49e-07 & 5.14e-07 & 4.77e-08 & 6.75e-08\tabularnewline
1e-01 & 1.40e-06 & 4.59e-07 & 6.49e-07 & 1.69e-06 & 8.49e-08 & 1.20e-07\tabularnewline
1e+00 & 1.40e-06 & 4.59e-07 & 6.49e-07 & 2.45e-06 & 1.56e-07 & 2.20e-07\tabularnewline
1e+01 & 1.40e-06 & 4.59e-07 & 6.49e-07 & 1.83e-06 & 1.09e-07 & 1.54e-07\tabularnewline
1e+02 & 1.40e-06 & 4.59e-07 & 6.49e-07 & 1.91e-05 & 8.14e-07 & 1.15e-06\tabularnewline
1e+03 & 1.40e-06 & 4.59e-07 & 6.49e-07 & 1.40e-04 & 1.10e-06 & 1.55e-06\tabularnewline
1e+04 & 1.41e-06 & 4.59e-07 & 6.49e-07 & 1.27e-03 & 5.34e-05 & 7.56e-05\tabularnewline
1e+05 & 1.39e-06 & 4.59e-07 & 6.49e-07 & 3.69e-04 & 1.94e-05 & 2.75e-05\tabularnewline
1e+06 & 1.63e-06 & 4.66e-07 & 6.59e-07 & 3.98e-04 & 3.42e-05 & 4.83e-05\tabularnewline
1e+07 & 1.99e+02 & 5.07e+01 & 7.18e+01 & 2.11e-03 & 3.53e-04 & 4.99e-04\tabularnewline
1e+08 & 1.99e+02 & 5.07e+01 & 7.18e+01 & 1.22e-01 & 2.83e-02 & 4.01e-02\tabularnewline
1e+09 & 1.99e+02 & 5.07e+01 & 7.18e+01 & 4.86e-01 & 2.05e-01 & 2.90e-01\tabularnewline
\end{tabular}
\end{table}

The results of Experiments \ref{exp:8} and \ref{exp:9} indicate
that the final accuracy is rather insensitive to the choice of $\alpha$.
It should be noted that the coefficient matrices in Examples \ref{exa:LinCaMo}
and \ref{exa:SingPer} are well-scaled.

\section{Final remarks and conclusions}\label{s.Final}

In summary, in the present paper, we investigated questions related to an efficient and reliable realization of a least-squares collocation method. These questions are particularly important since a higher index DAE is an essentially ill-posed problem in naturally given spaces, which is why we must be prepared for highly sensitive discrete problems. In order to obtain a overall procedure that is as robust as possible, we provided criteria which led to a robust selection of the collocation points and of the basis functions, whereby the latter is also useful for the shape of the resulting discrete problem. Additionally, a number of new, more detailed, error estimates have been given that support some of the design decisions. 
The following particular items are worth highlighting in this context:
\begin{itemize}
\item The basis for the approximation space should be appropriately shifted
and scaled orthogonal polynomials. We could not observe any larger differences
between the behavior of Legendre and Chebyshev polynomials.
\item The collocation points should be chosen to be the Gauss-Legendre,
Lobatto, or Radau nodes. This leads to discrete problems whose conditioning
using the discretization by interpolation ($\Phi_{\pi,M}^{R}$) is
not much worse than that resembling collocation methods for ordinary
differential equations ($\Phi_{\pi,M}^{C}$). A particular efficient
and stable implementation is obtained if Gauss-Legendre or Radau nodes
are used since, in this case, diagonal weighting ($\Phi_{\pi,M}^{I}$)
coincides with the interpolation approach.
\item A critical ingredient for the implementation of the method
is the algorithm used for the solution of the constrained linear least-squares
problems. Given the expected bad conditioning of the least-squares
problem, a QR-factorization with column pivoting must lie at the heart
of the algorithm. At the same time, the sparsity structure must be
used as best as possible. In our tests, the direct solver seems to be the most robust one. With respect to efficiency and accuracy, the deferred correction solver is preferable. However, it failed in certain tests.
\item It seems as if, for problems with a smooth solution, a higher degree $N$ of the ansatz polynomials with
a low number of subintervals $n$ in the mesh is preferable over a
smaller degree with a larger number of subintervals with respect to
accuracy. Some first theoretical justification has been provided for this claim.
\item So far, in all experiments of this and previously published papers, we did not observe any serious differences
in the accuracy obtained in dependence on the choice of $M>N$ for
fixed $n$. The results for $M=N+1$ are not much different from those obtained for a larger $M$.
\item While superconvergence in classical collocation for ODEs and index-1 DAEs is a very favorable phenomenon, we could not find anything analogous in all our experiments.
\item The simple collocation procedure using $\Phi_{\pi,M}^C$ performs surprisingly well. In fact, the results are, in our experiments, in par with those using $\Phi_{\pi,M}^R=\Phi_{\pi,M}^I$. However, we have no theoretical justification for this as yet.
\item Our method is designed for variable grids. However, so far we have only worked with constant step size.
 In order to be able to adapt the
grid and the polynomial degree, or even select appropriate grids, it is important to understand the structure of the error, that is, 
how the global error depends on local errors. This is a very important open problem, for which we have no solution yet.
\end{itemize}
In conclusion, we note that earlier implementations, among others the one from the very first paper in this matter \cite{HMTWW}, which started from proven ingredients  for ODE codes, are from today's point of view and experience a rather bad version for the least-squares collocation. 
Nevertheless, the test results calculated with it were already very impressive. This strengthens our belief that 
a careful implementation of the method gives rise to a very efficient solver for higher-index DAEs.

\appendix

\section{Some facts about classical orthogonal polynomials}

In the derivations, classical orthogonal polynomials have been heavily
used. For the reader's convenience important properties are collected
below.

\subsection{Legendre Polynomials\label{subsec:Legendre-polynomials}}

The Legendre polynomials $P_{\nu}$, $\nu=0,1,\ldots$, are defined
by the recurrence relation
\begin{align}
P_{0}(\tau) & =1,\nonumber \\
P_{1}(\tau) & =\tau,\label{eq:Legendre-1}\\
(\nu+1)P_{\nu+1}(\tau) & =(2\nu+1)\tau P_{\nu}(\tau)-\nu P_{\nu-1}(\tau),\quad\nu=1,2,\ldots.\nonumber 
\end{align}
Some properties of the Legendre polynomials are
\begin{enumerate}
\item $P_{\nu}(-1)=(-1)^{\nu}$, $P_{\nu}(1)=1,\quad\nu=0,1,\ldots$,
\item $\int_{-1}^{1}P_{0}(\tau)=2$, $\int_{-1}^{1}P_{\nu}(\tau)=0,\quad\nu=1,2,\ldots$
\item $\int_{-1}^{1}P_{\nu}(\tau)P_{\mu}(\tau)d\tau=\frac{2}{2\nu+1}\delta_{\nu\mu},\quad\nu,\nu=0,1,\ldots$,
where $\delta_{\nu\mu}$ denotes the Kronecker $\delta$-symbol,
\item $P_{\nu+1}'(\tau)-P_{\nu-1}'(\tau)=(2\nu+1)P_{\nu}(\tau),\quad\nu=1,2,\ldots$
\end{enumerate}
The latter property is useful for representing integrals,
\begin{align}
\int_{-1}^{\tau}P_{\nu}(\sigma)d\sigma & =\frac{1}{2\nu+1}\left(P_{\nu+1}(\tau)-P_{\nu-1}(\tau)-(-1)^{\nu+1}+(-1)^{\nu-1}\right)\nonumber \\
 & =\frac{1}{2\nu+1}\left(P_{\nu+1}(\tau)-P_{\nu-1}(\tau)\right).\label{eq:Int_Legendre}
\end{align}
Moreover, $\int_{-1}^{\tau}P_{0}(\sigma)d\sigma=\tau+1$.

For a stable evaluation of the Legendre polynomials, we use a representation
proposed in \cite{LebedevBarburin65},
\[
P_{\nu+1}(\tau)=\frac{\nu}{\nu+1}(\tau P_{\nu}(\tau)-P_{\nu-1}(\tau))+\tau P_{\nu}(\tau).
\]
In the implementation, all polynomials must be evaluated simulataneously
for each given $\tau$. The evaluation of the recursions is cheap.
Linear combinations of the basis function can be conveniently and
stably evaluated using the Clenshaw algorithm \cite[p. 56]{FoxParker68}\cite{Barrio02,Smok02}.

The shifted Legendre polynomials $\tilde{P}_{\nu}$ are given by $\tilde{P}_{\nu}(\rho)=P_{\nu}(2\rho-1)$,
$\nu=0,1,\ldots$\footnote{$\tilde{P}_{\nu}$ ist eine Standardbezeichnung.}
They fulfill the orthogonality relations
\[
\int_{0}^{1}\tilde{P}_{\nu}(\rho)\tilde{P}_{\mu}(\rho)d\rho=\frac{1}{2\nu+1}\delta_{\nu\mu}.
\]

Moreover, we introduce the normalized shifted Legendre polynomials
$\hat{P}_{\nu}$ by
\[
\hat{P}_{\nu}(\rho)=(2\nu+1)^{1/2}\tilde{P}(\rho).
\]

\subsection{Chebyshev polynomials\label{subsec:Chebyshev-polynomials}}

The Chebyshev polynomials of the first kind $T_{\nu}$, $\nu=0,1,\ldots$,
are defined by the recurrence relation
\begin{align}
T_{0}(\tau) & =1,\nonumber \\
T_{1}(\tau) & =\tau,\label{eq:Chebyshev-1}\\
T_{\nu+1}(\tau) & =2\tau T_{\nu}(\tau)-T_{\nu-1}(\tau),\quad\nu=1,2,\ldots.\nonumber 
\end{align}
Some properties of the Chebyshev polynomials are
\begin{enumerate}
\item $T_{\nu}(-1)=(-1)^{\nu},\quad T_{\nu}(1)=1,\quad\nu=0,1,\ldots$
\item $T_{\nu}(\tau)=\frac{1}{2}\left(\frac{1}{\nu+1}T_{\nu+1}'(\tau)-\frac{1}{\nu-1}T_{\nu-1}'(\tau)\right),\quad\nu=2,3,\ldots$
\end{enumerate}
Similarly as before, we obtain the simple presentation
\begin{equation}
\int_{-1}^{\tau}T_{\nu}(\sigma)d\sigma=\frac{1}{2(\nu^{2}-1)}\left((\nu-1)T_{\nu+1}(\tau)-(\nu+1)T_{\nu-1}(\tau)+(-1)^{\nu-1}2\right).\label{eq:Int_Chebyshev}
\end{equation}
The orthogonality property of the Chebyshev polynomials reads
\[
\int_{-1}^{1}T_{\nu}(\tau)T_{\mu}(\tau)\frac{d\tau}{\sqrt{1-x^{2}}}=\begin{cases}
0, & \nu\neq\mu,\\
\pi, & \nu=\mu=0,\\
\frac{\pi}{2}, & \nu=\mu\neq0.
\end{cases}
\]
The normalized Chebyshev polynomials $\bar{T}_{\nu}$ are given by

\[
\bar{T}_{\nu}(\tau)=\begin{cases}
\sqrt{\frac{1}{\pi}}T_{0}(\tau), & \nu=0,\\
\sqrt{\frac{2}{\pi}}T_{\nu}(\tau), & \nu=1,2,\ldots.
\end{cases}
\]
Linear combinations of Chebyshev polynomials can be stably computed
by the Clenshaw algorithm \cite[p. 57ff]{FoxParker68},\cite{Barrio02,Smok02}.

\subsection{The structure of the discrete problems\label{subsec:The-structure-of}}

Consider the linear DAE (\ref{eq:DAE}). In order to simplify the
notation slightly, define $E=AD$ such that, for sufficiently smooth
functions $x\in X_{\pi}$, (\ref{eq:DAE}) is equivalent to
\[
E(t)x'(t)+B(t)x(t)=q(t),\quad t\in(t_{j-1},t_{j}),\quad j=1,\ldots,n.
\]
Let, on $(t_{j-1},t_{j})$, $x(t)=(x_{j1}(t),\ldots,x_{jm}(t))^{T}$.
Then, we have the representations
\begin{align}
x_{j\kappa}(t) & =\sum_{l=0}^{N}c_{j\kappa l}\bar{p}_{jl}(t),\quad\kappa=1,\ldots,k,\nonumber \\
x_{j\kappa}(t) & =\sum_{l=0}^{N-1}c_{j\kappa l}p_{jl}(t),\quad\kappa=k+1,\ldots,m,\label{eq:reprx}
\end{align}
with $p_{jl}$, $\bar{p}_{jl}$ from (\ref{eq:scaledp}). Introduce
\[
\bar{Q}_{j}(t)=(\bar{p}_{j1}(t),\ldots,\bar{p}_{lN}(t)),\quad Q_{j}(t)=(p_{j1}(t),\ldots,p_{j,N-1}(t))
\]
as well as
\[
a_{j}(t)=\left[\begin{array}{cc}
I_{k}\otimes\bar{Q}_{j}(t) & 0\\
0 & I_{m-k}\otimes Q_{j}(t)
\end{array}\right]\in\R^{m\times(mN+k)}.
\]
Collect the coefficents in (\ref{eq:reprx}) in the vector
\[
c_{j}=(c_{j10},\ldots,c_{j1N},c_{j20},\ldots,c_{jm,N-1})^{T}\in\R^{mN+k}.
\]
Then it holds 
\[
x_{j}(t)=a_{j}(t)c_{j}.
\]
Then, for $W_{j}$ of (\ref{eq:W}), we have the representation
\[
w_{j}(t_{ji})=\left[E(t_{ji})a_{j}'(t_{ji})+B(t_{ji})a_{j}(t_{ji})\right]c_{j}
-q(t_{ji})=:A_{ji}c_{j}-r_{ji}
\]
and
\[
W_{j}=h^{1/2}\left[\begin{array}{c}
A_{j1}\\
\vdots\\
A_{jM}
\end{array}\right]c_{j}-
h^{1/2}\left[\begin{array}{c}r_{j1}\\
\vdots\\
r_{jM}
\end{array}\right]
.
\]

The functionals $\Phi_{\pi,M}^{r}$ have, for $r=R,I,C$ an representation
of the kind
\[
\Phi_{\pi,M}^{r}=W^{T}\mathcal{L}^{r}W+|G_{a}x(a)+G_{b}x(b)-d|^{2}.
\]
Assume that there exists a matrix $\hat{L}^{r}$ such that $L^{r}=\left(\hat{L}^{r}\right)^{T}\hat{L}^{r}$.
For $r=I,C$, simple possibilities are $\hat{L}^{I}=\diag(\sqrt{\gamma_{1}},\ldots,\sqrt{\gamma_{M}})$
and $\hat{L}^{C}=M^{-1/2}I_{M}$. For $L^{R}$, the choice $\hat{L}^{R}=\tilde{V}^{-1}$
(cf (\ref{eq:VDMmatrix})) is suitable. Define 
\[
A_{j}=h_{j}^{1/2}\diag(\hat{L}^{r}\otimes I_{m},\ldots,\hat{L}^{r}\otimes I_{m})\left[\begin{array}{c}
A_{j1}\\
\vdots\\
A_{jM}
\end{array}\right],
\quad
r_j = h_{j}^{1/2}\diag(\hat{L}^{r}\otimes I_{m},\ldots,\hat{L}^{r}\otimes I_{m})\left[\begin{array}{c} r_{j1} \\ \vdots \\ r_{jM}
\end{array}\right]
.
\]
Then we set
\[
\mathcal{A}=\left[\begin{array}{ccccc}
A_{1} & 0 & \cdots &  & 0\\
0 & \ddots &  &  & \vdots\\
\vdots &  & \ddots\\
 &  &  & \ddots & 0\\
0 &  &  &  & A_{n}\\
G_{a}a_{1}(a) & 0 & \cdots & 0 & G_{b}a_{n}(b)
\end{array}\right],
\quad
r=\left[\begin{array}{c} r_1 \\ \vdots \\ r_n \end{array}\right]
.
\]
Moreover, the continuity conditions (\ref{eq:CC}) can be represented by the matrix
\[
\mathcal{C}=\left[\begin{array}{cccccc}
I_{k}\otimes Q_{1}(t_{1}) & I_{k}\otimes Q_{2}(t_{1})\\
 & I_{k}\otimes Q_{2}(t_{2}) & I_{k}\otimes Q_{3}(t_{2})\\
 &  & \ddots & \ddots\\
 &  &  & \ddots & \ddots\\
 &  &  &  & I_{k}\otimes Q_{n-1}(t_{n-1}) & I_{k}\otimes Q_{n}(t_{n-1})
\end{array}\right].
\]
The discrete minimization problem becomes, therefore,
\[
 \varphi^r(c)=\|\mathcal{A}c-r\|_{\R^{nmM+l}}^{2}\rightarrow\min!
\]
under the constraint
\[
\mathcal{C}c=0.
\]

\bibliographystyle{plain}
\bibliography{LSCMImpl}

\section*{\clearpage}

\begin{figure}
 \begin{tabular}{c}
  \includegraphics[scale=0.4]{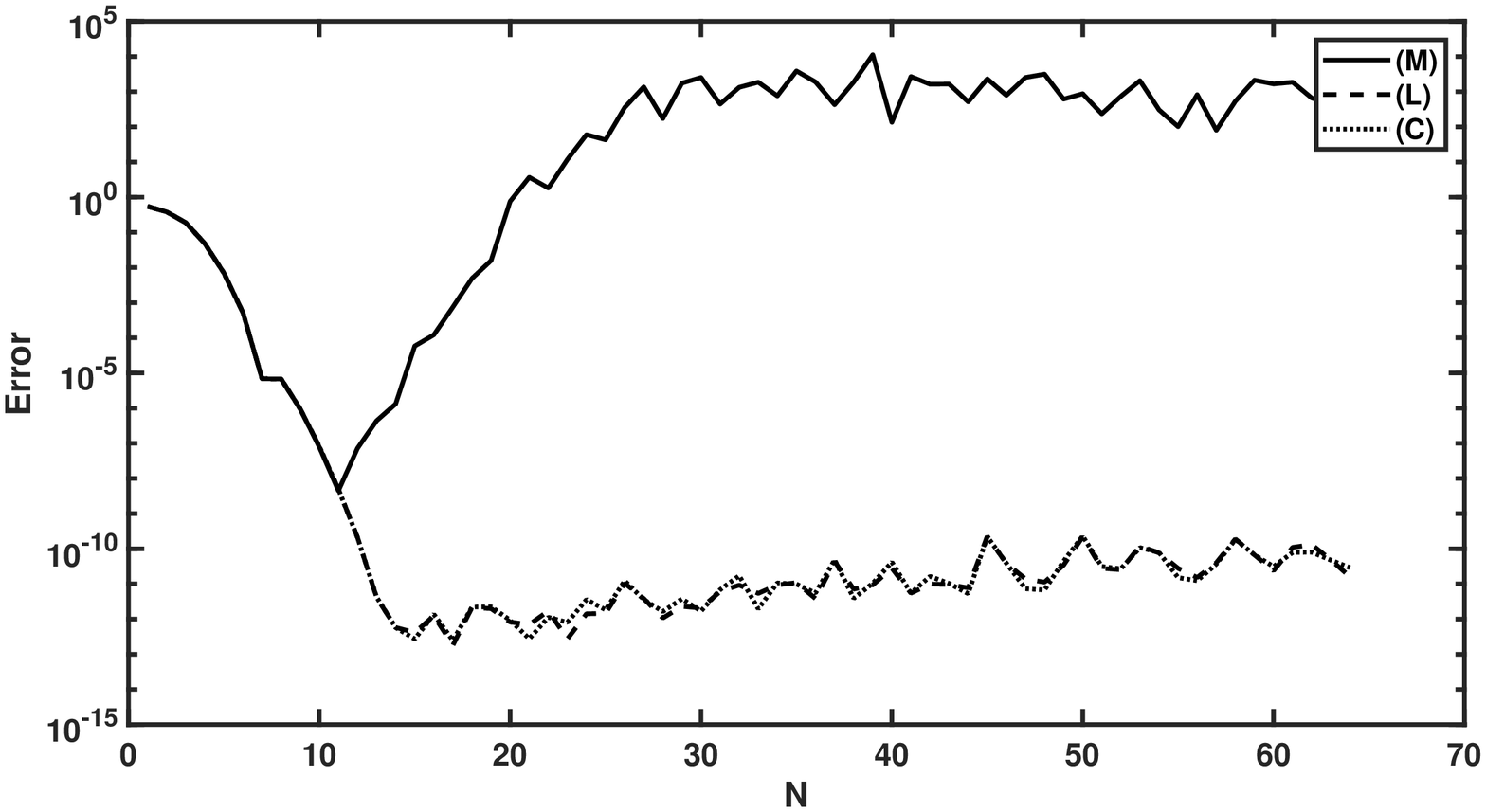} \\
  (1) \\
  \includegraphics[scale=0.4]{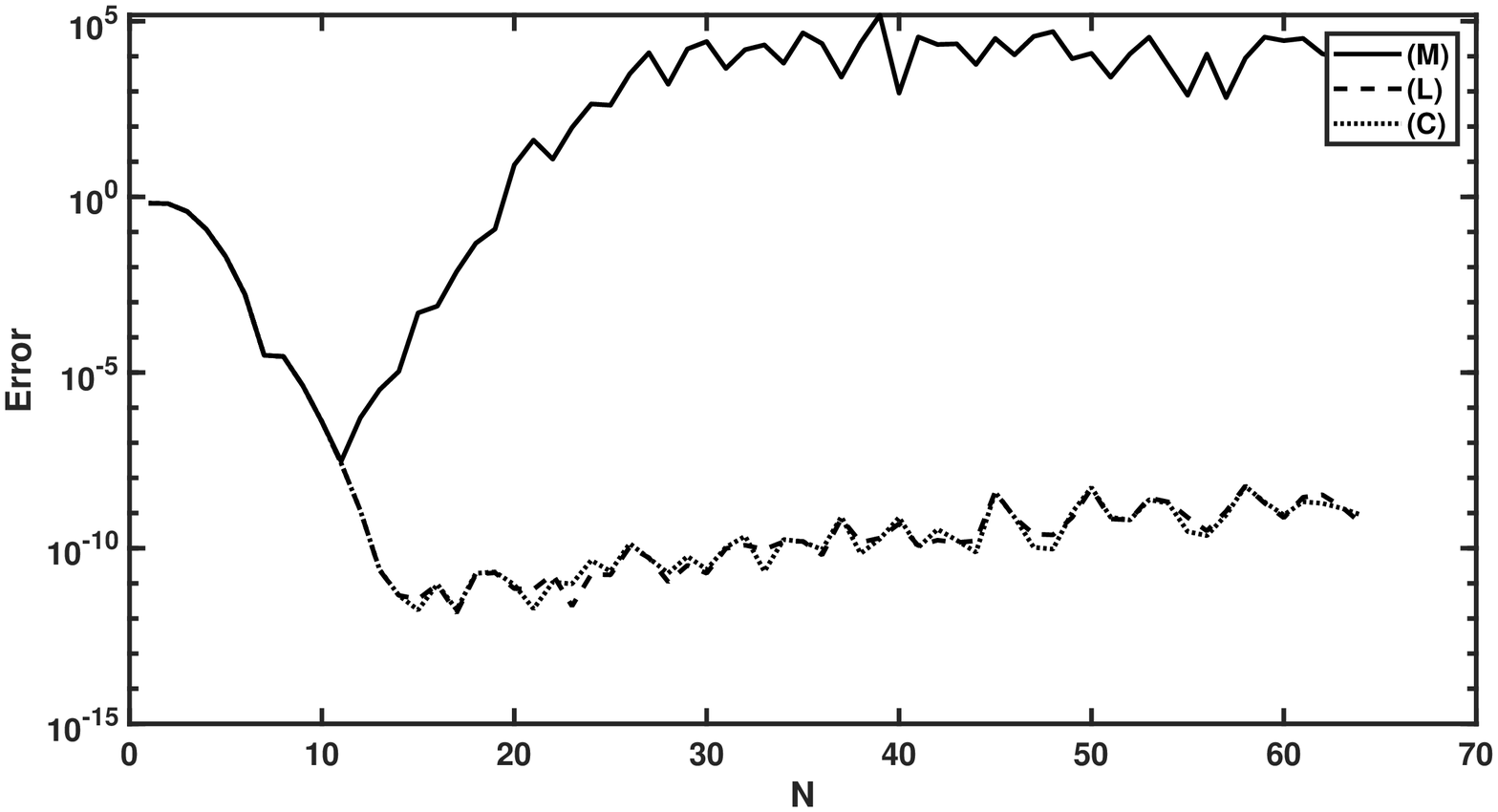} \\
  (2) \\
  \includegraphics[scale=0.4]{ImNew/BasisTestDAE1/Ex1_Linf} \\
  (3)
 \end{tabular}
\caption{Error of the approximate solution in Experiment~\ref{exp:1}.
The abbreviations (M) for the monomial basis, (L) for the Legendre basis, and (C) for the Chebyshev basis are used.
The error is measured in the norms of (1) $L^{2}((0,1),\R^{m})$, (2)
$L^{\infty}((0,1),\R^{m})$, and (3) $H_{D}^{1}((0,1),\R^{m})$.\label{fig:Ex1}}
\end{figure}

\begin{figure}
 \begin{tabular}{c}
  \includegraphics[scale=0.4]{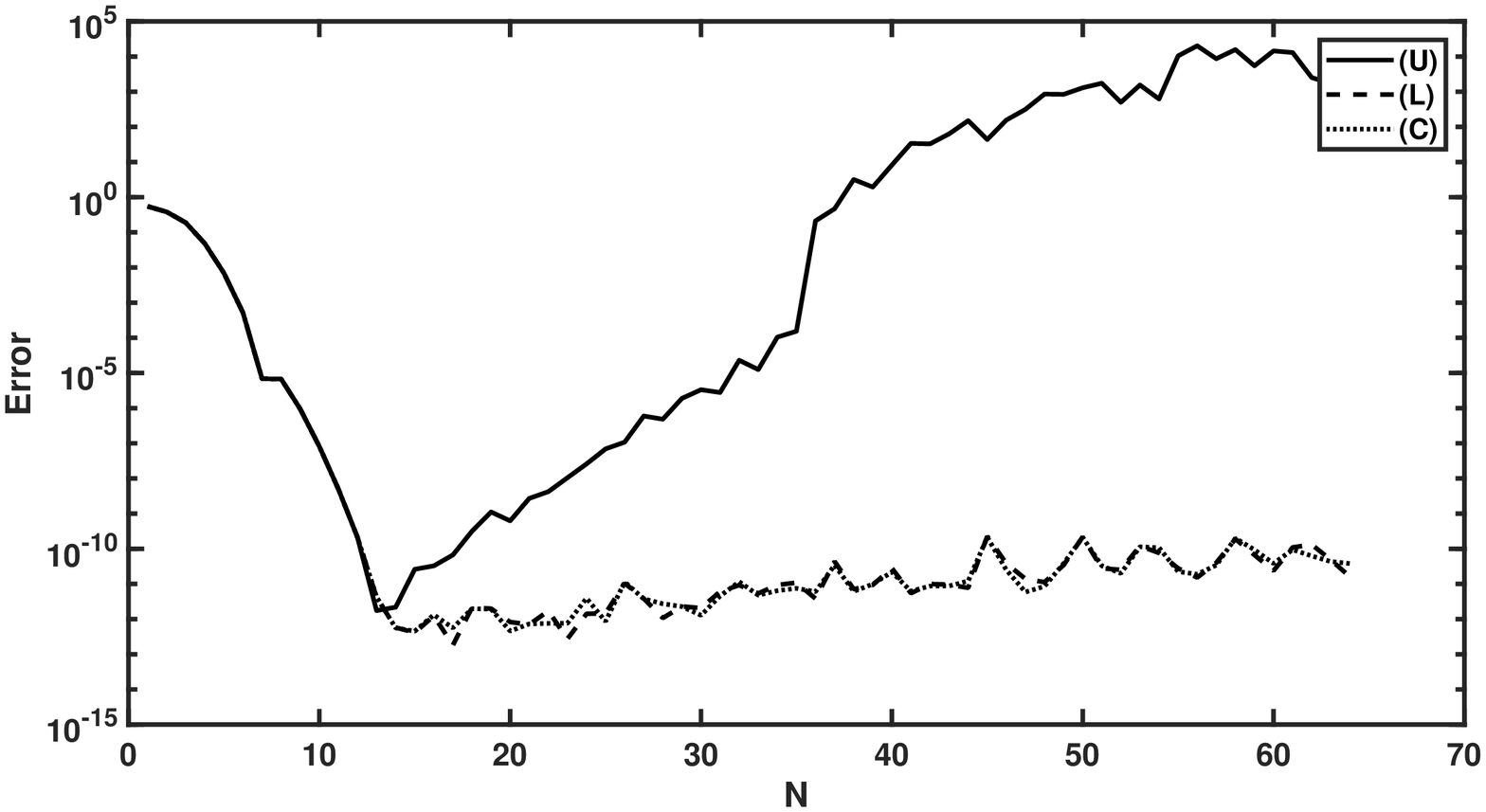} \\
  (1) \\
  \includegraphics[scale=0.4]{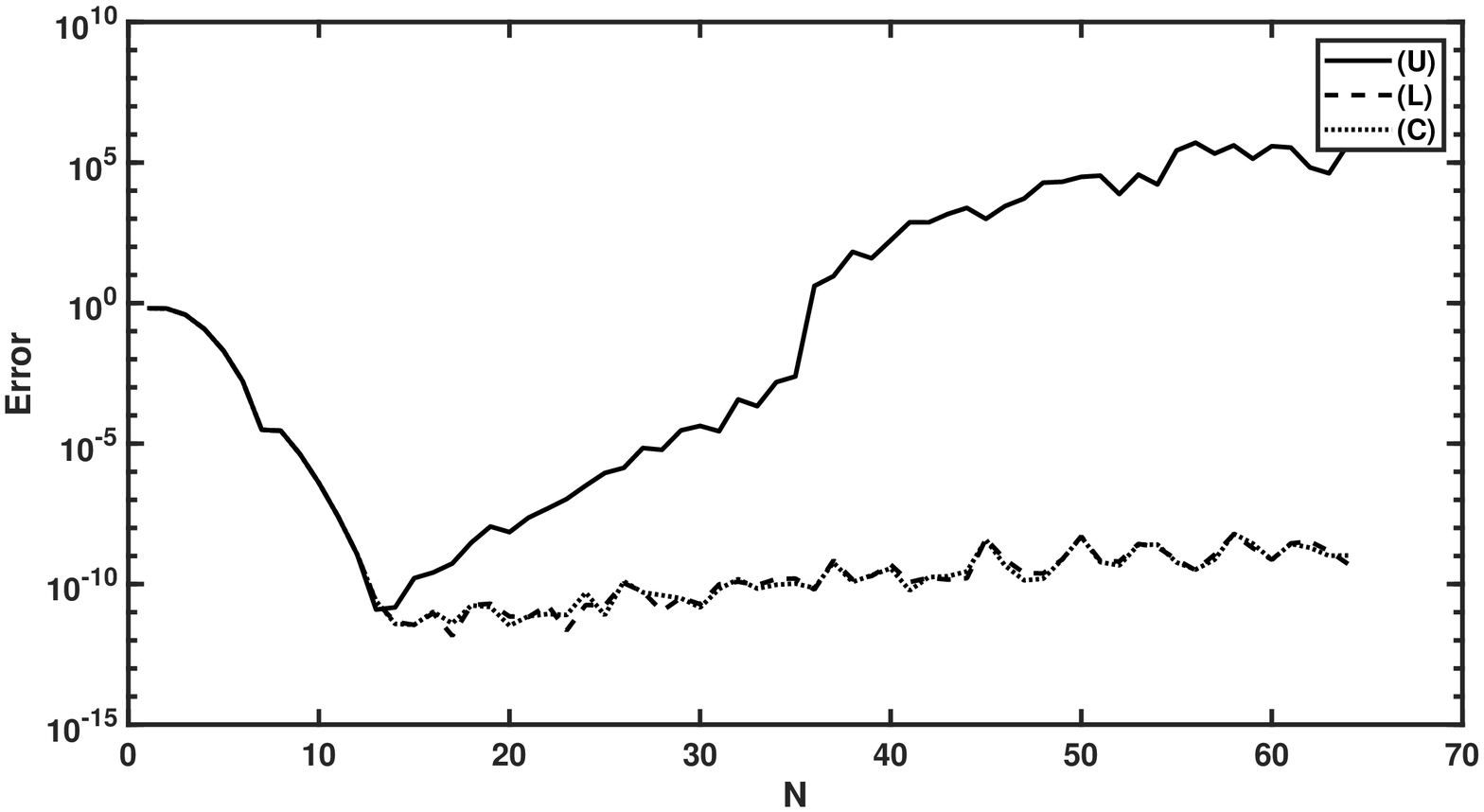} \\
  (2) \\
  \includegraphics[scale=0.4]{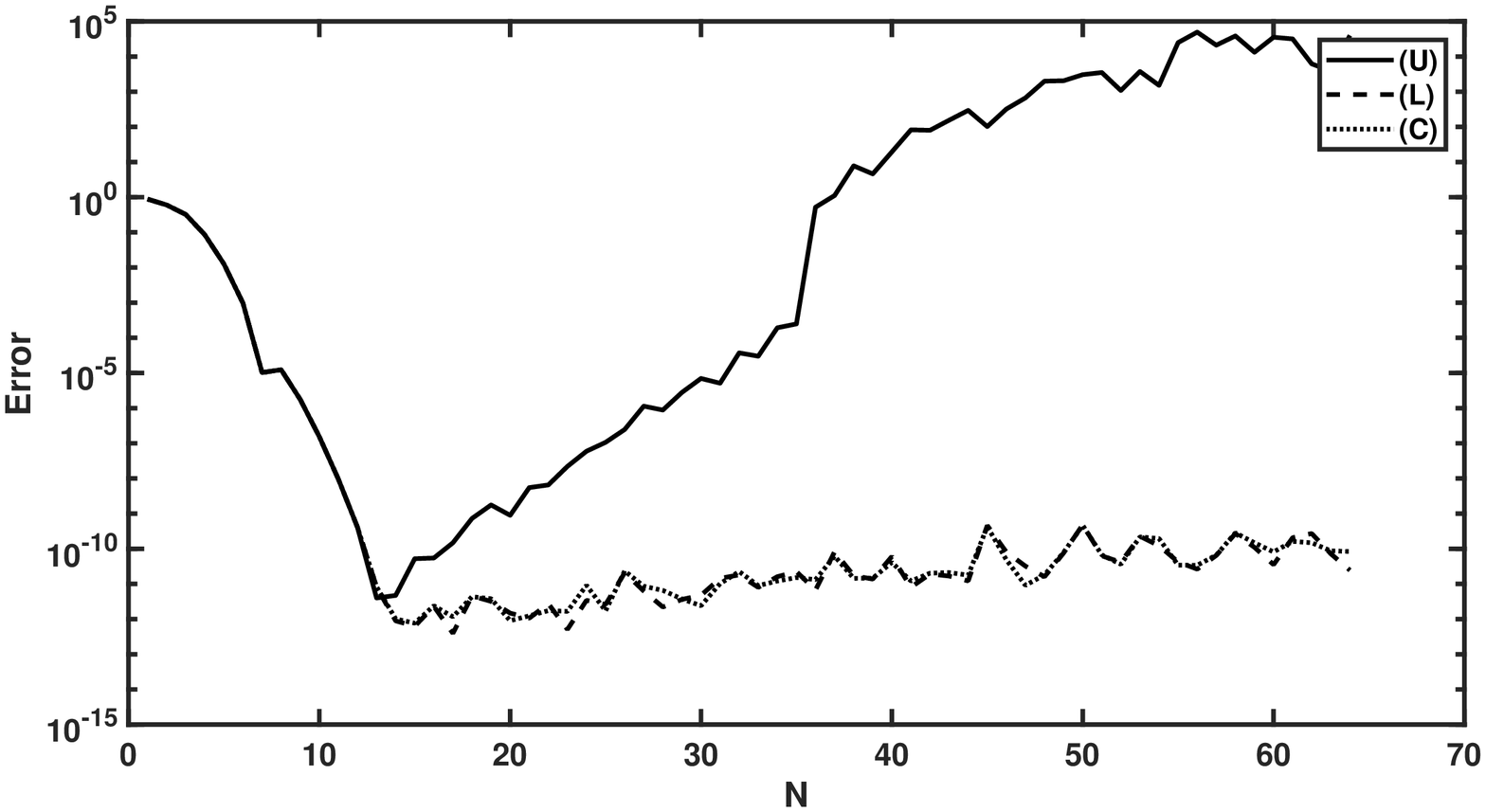} \\
  (3)
 \end{tabular}
\caption{Error of the approximate solution in Experiment~\ref{exp:2}. 
The abbreviations (U) for uniform nodes, (L) for the Gauss-Legendre nodes, and (C) for the Chebyshev nodes are used.
The error is measured in the norms of (1) $L^{2}((0,1),\R^{m})$, (2)
$L^{\infty}((0,1),\R^{m})$, and (3) $H_{D}^{1}((0,1),\R^{m})$.\label{fig:Ex2}}
\end{figure}

\begin{figure}
 \begin{tabular}{c}
  \includegraphics[scale=0.4]{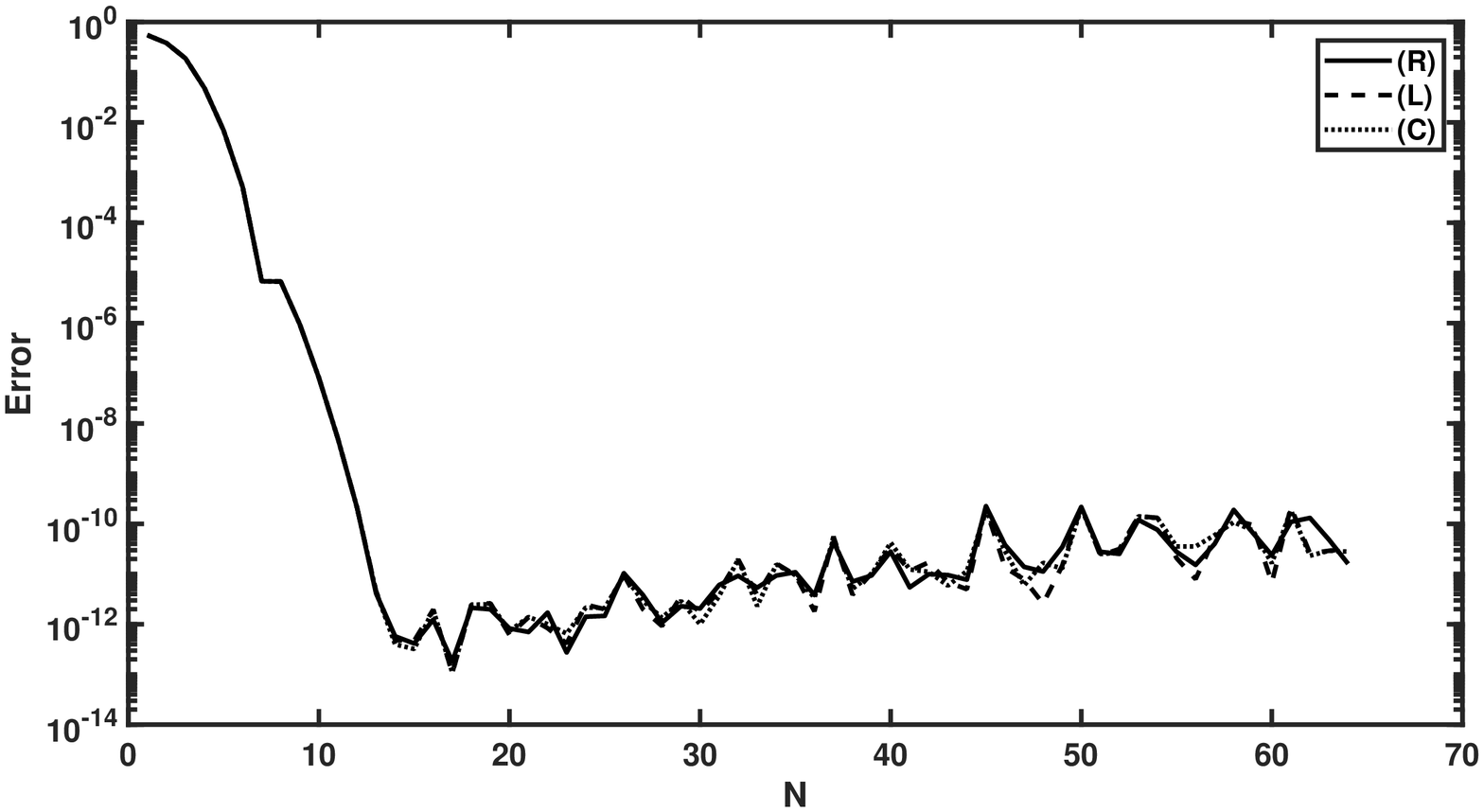} \\
  (1) \\
  \includegraphics[scale=0.4]{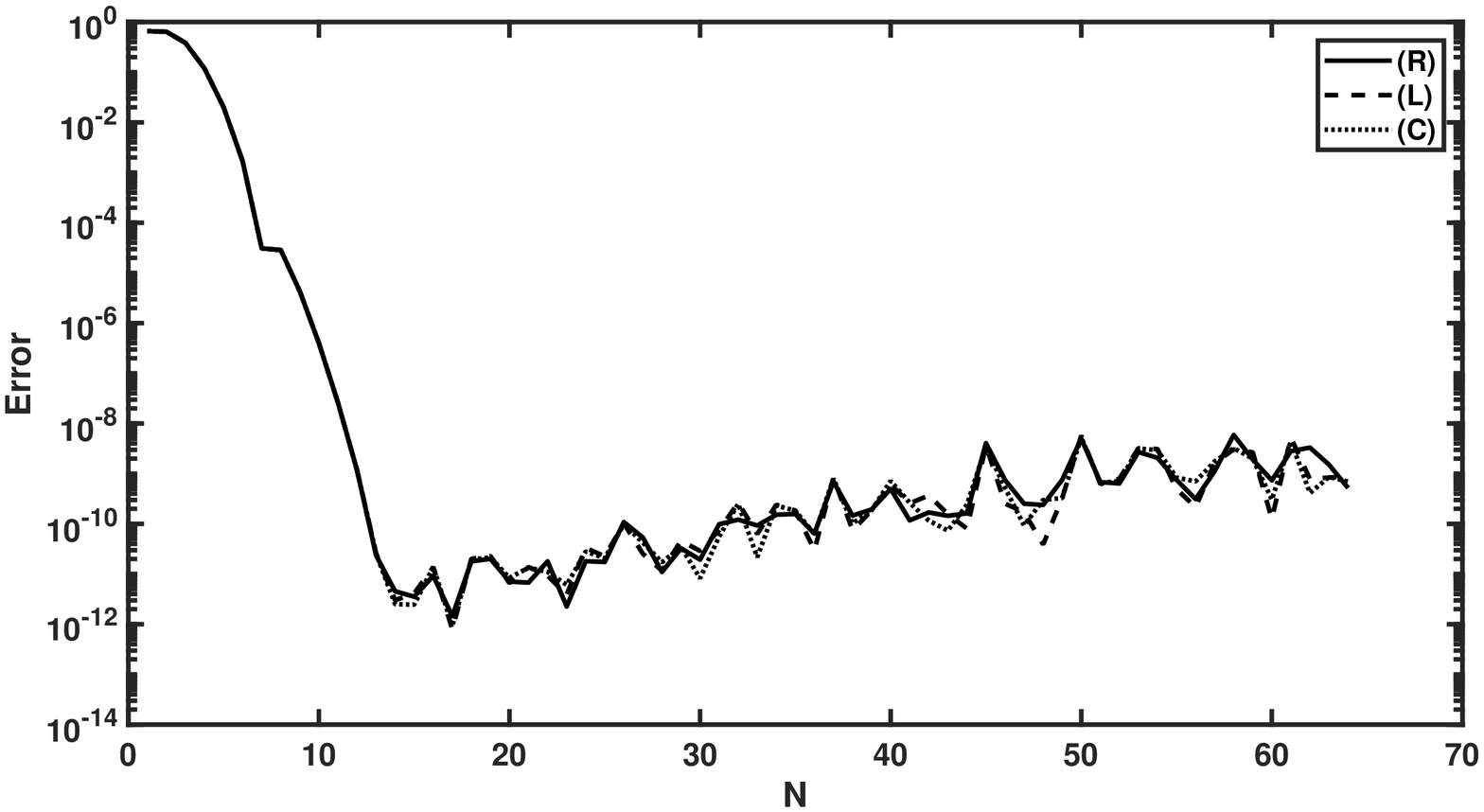} \\
  (2) \\
  \includegraphics[scale=0.4]{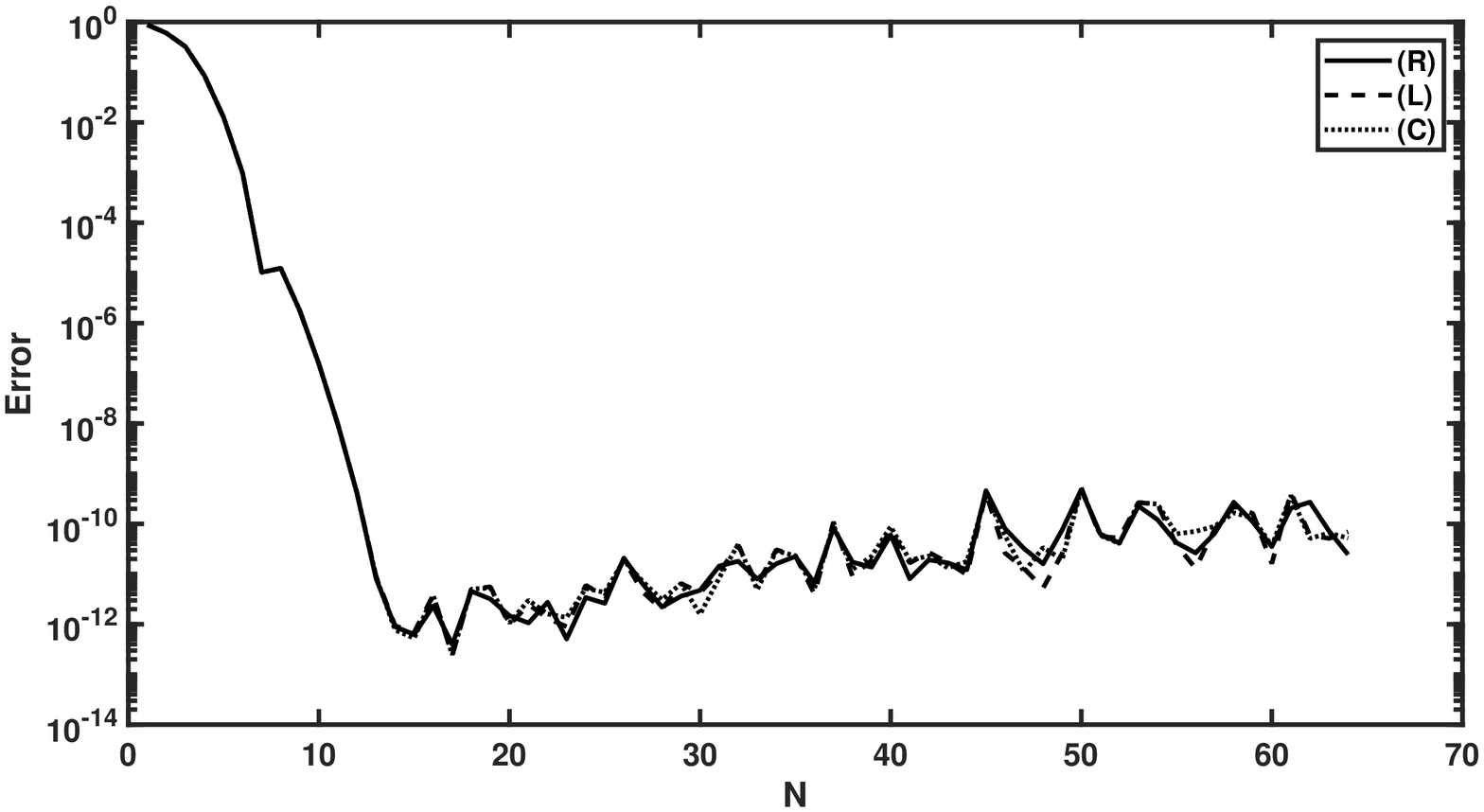} \\
  (3)
 \end{tabular}
\caption{Error of the approximate solution in Experiment~\ref{exp:3}. 
The abbreviations (R) for the Runge-Kutta basis in Legendre representation, (L) for the Legendre basis, and (C) for the Chebyshev basis are used.
The error is measured in the norms of (1) $L^{2}((0,1),\R^{m})$, (2)
$L^{\infty}((0,1),\R^{m})$, and (3) $H_{D}^{1}((0,1),\R^{m})$.\label{fig:Ex3}}
\end{figure}

\begin{figure}
 \begin{tabular}{c}
  \includegraphics[scale=0.4]{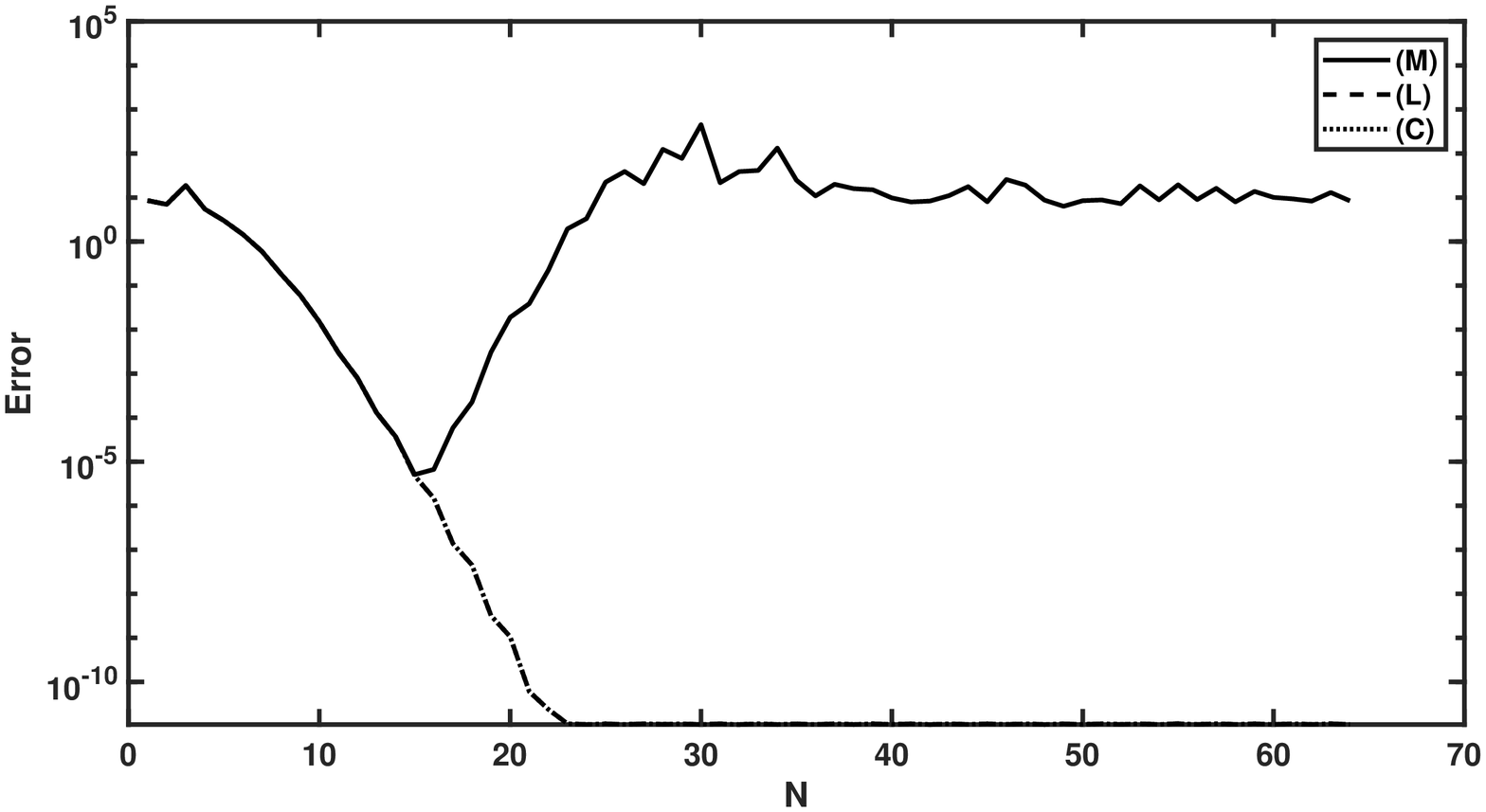} \\
  (1) \\
  \includegraphics[scale=0.4]{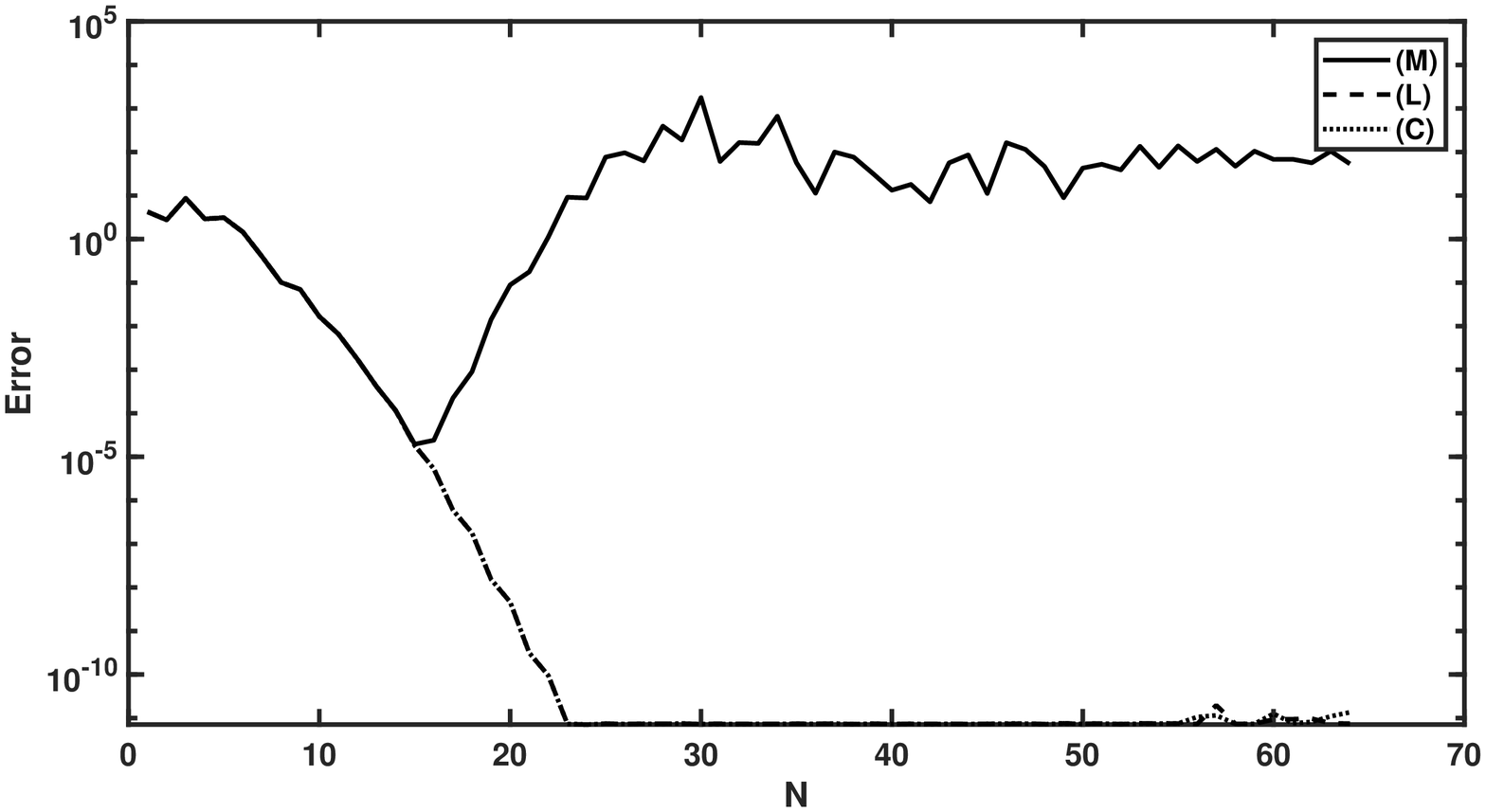} \\
  (2) \\
  \includegraphics[scale=0.4]{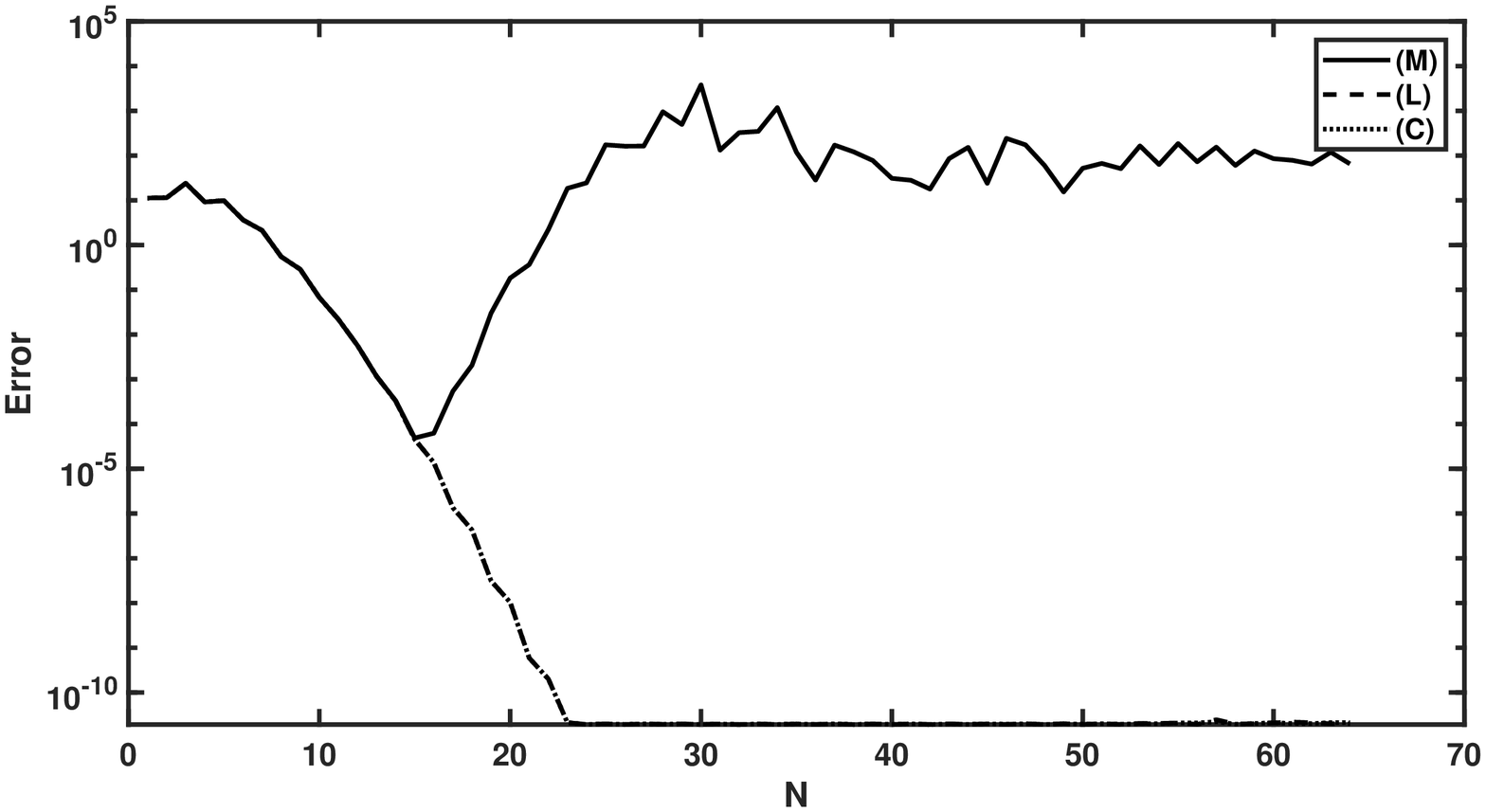} \\
  (3)
 \end{tabular}
\caption{Error of the approximate solution in Experiment~\ref{exp:4}.
The abbreviations (M) for the monomial basis, (L) for the Legendre basis, and (C) for the Chebyshev basis are used.
The error is measured in the norms of (1) $L^{2}((0,1),\R^{m})$, (2)
$L^{\infty}((0,1),\R^{m})$, and (3) $H_{D}^{1}((0,1),\R^{m})$.\label{fig:Ex4}}
\end{figure}

\begin{figure}
 \begin{tabular}{c}
  \includegraphics[scale=0.4]{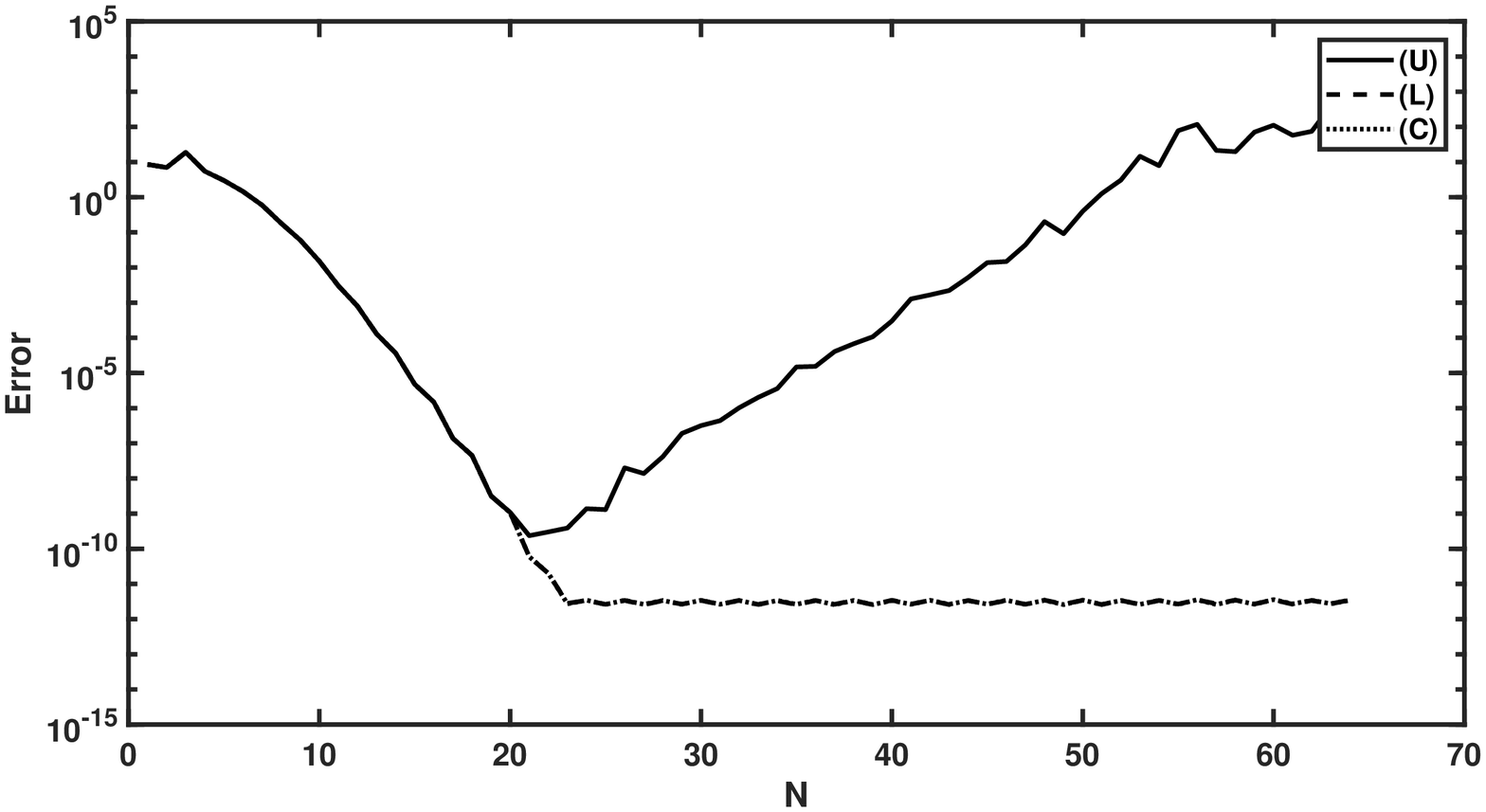} \\
  (1) \\
  \includegraphics[scale=0.4]{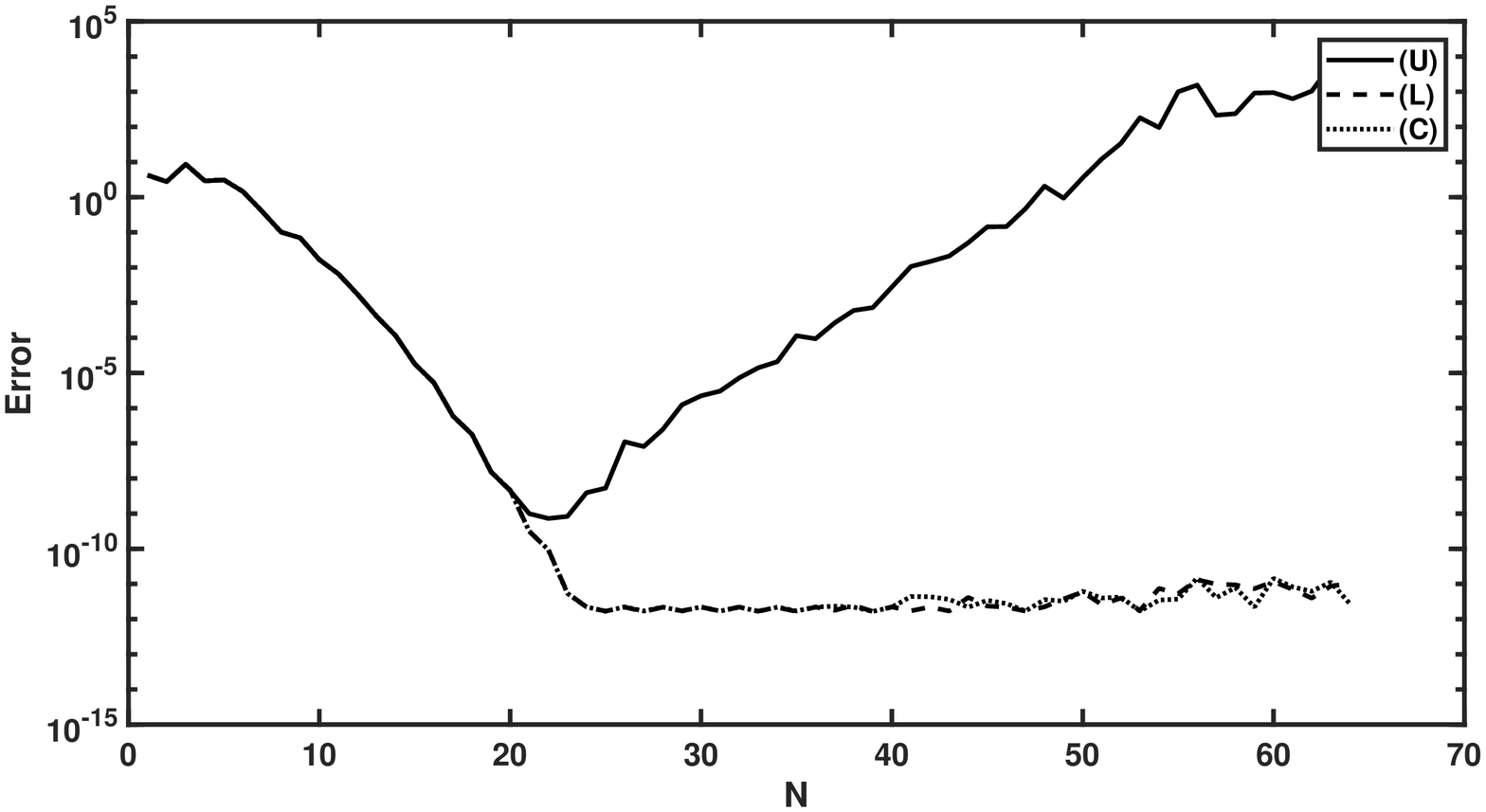} \\
  (2) \\
  \includegraphics[scale=0.4]{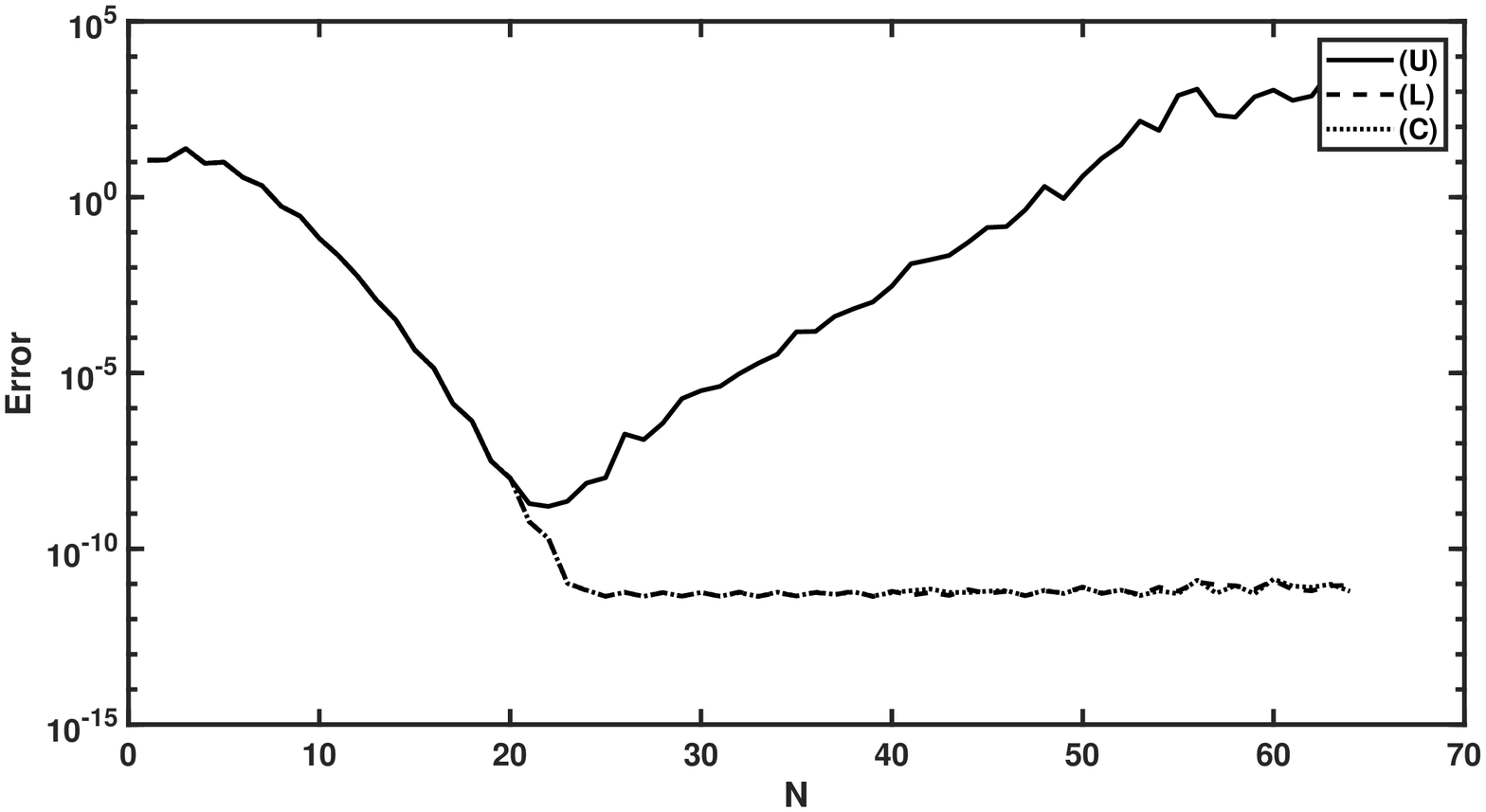} \\
  (3)
 \end{tabular}
\caption{Error of the approximate solution in Experiment~\ref{exp:5}. 
The abbreviations (U) for uniform nodes, (L) for the Gauss-Legendre nodes, and (C) for the Chebyshev nodes are used.
The error is measured in the norms of (1) $L^{2}((0,1),\R^{m})$, (2)
$L^{\infty}((0,1),\R^{m})$, and (3) $H_{D}^{1}((0,1),\R^{m})$.\label{fig:Ex5}}
\end{figure}

\begin{figure}
 \begin{tabular}{c}
  \includegraphics[scale=0.4]{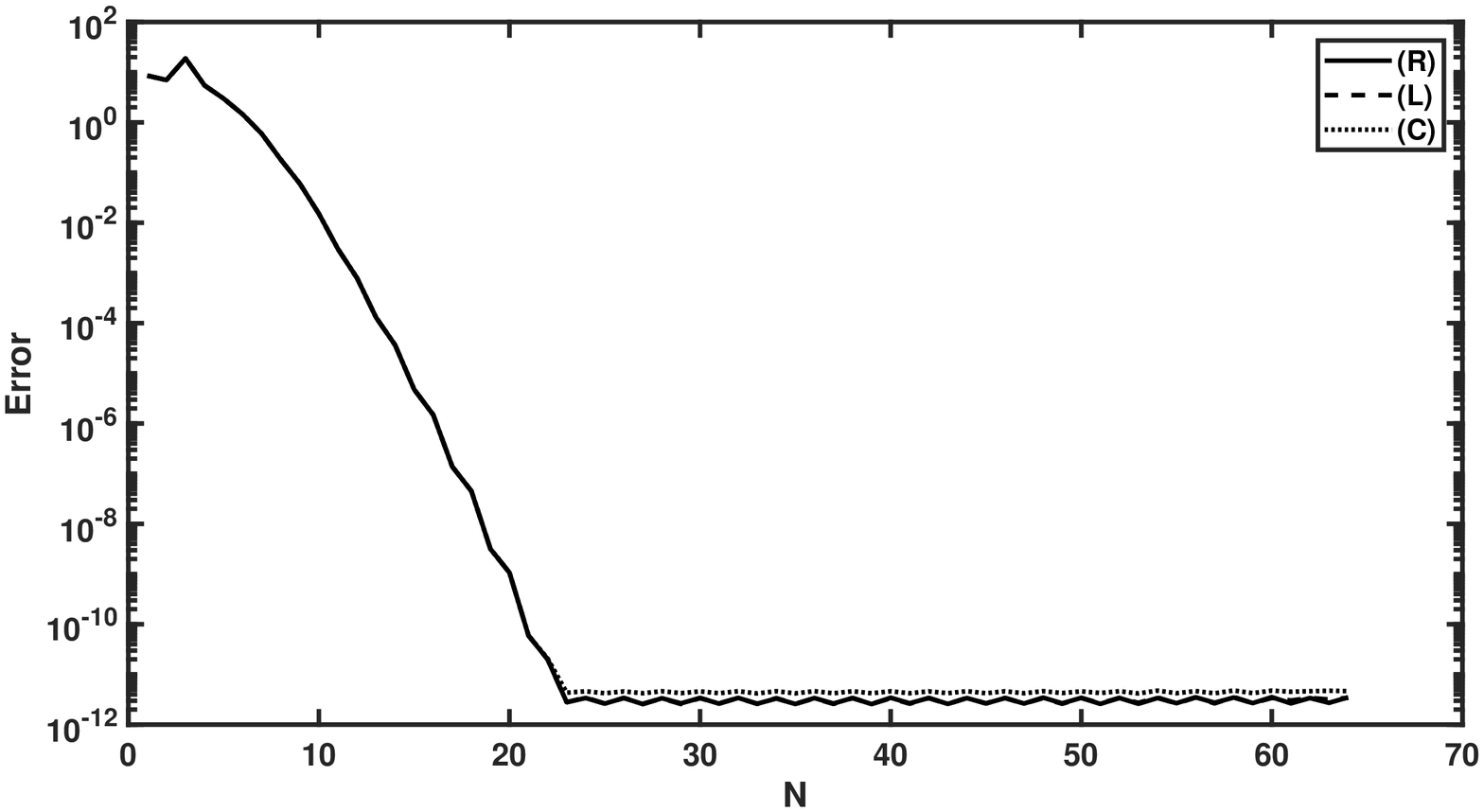} \\
  (1) \\
  \includegraphics[scale=0.4]{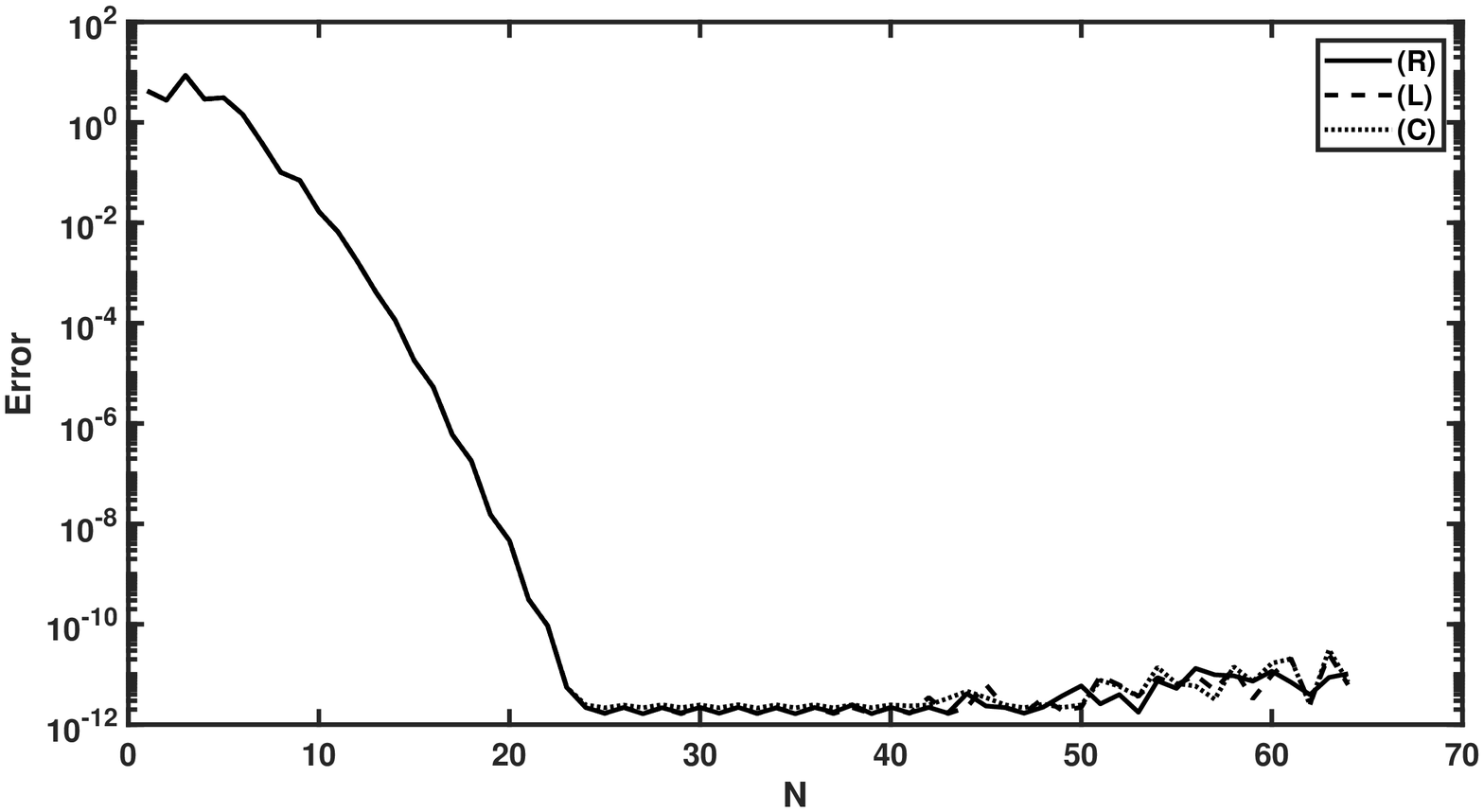} \\
  (2) \\
  \includegraphics[scale=0.4]{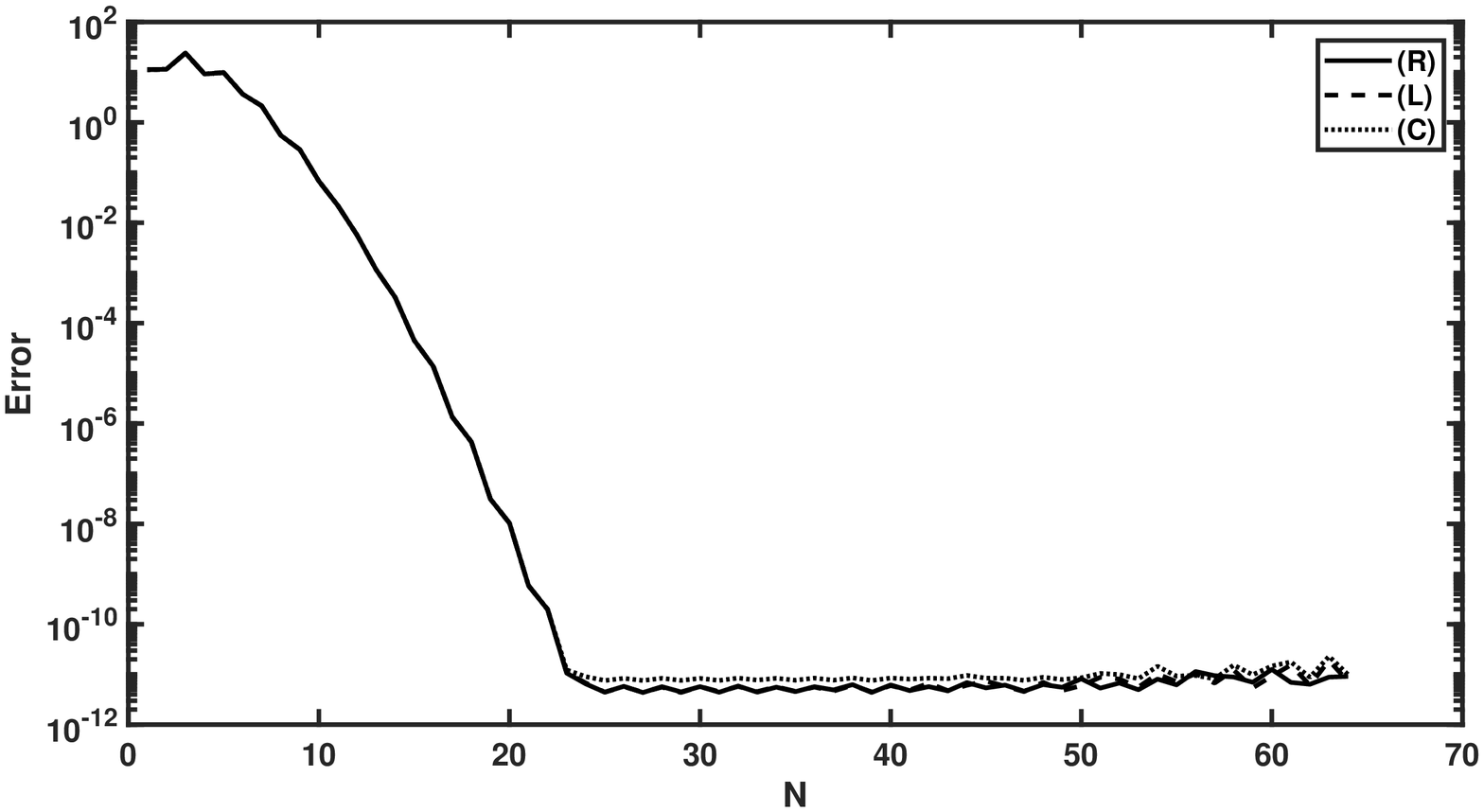} \\
  (3)
 \end{tabular}
\caption{Error of the approximate solution in Experiment~\ref{exp:6}. 
The abbreviations (R) for the Runge-Kutta basis in Legendre representation, (L) for the Legendre basis, and (C) for the Chebyshev basis are used.
The error is measured in the norms of (1) $L^{2}((0,1),\R^{m})$, (2)
$L^{\infty}((0,1),\R^{m})$, and (3) $H_{D}^{1}((0,1),\R^{m})$.\label{fig:Ex6}}
\end{figure}

\clearpage

\begin{sidewaystable}
\vspace{0.5\textwidth}

\caption{Error of the approximate solution using Legendre basis functions and
Gauss-Legendre (G), Radau IIA (R), and Lobatto (L) collocation nodes
when using the functional $\Phi_{\pi,M}^{R}$ in Example \ref{exa:LinCaMo}.
The norm is that of $H_{D}^{1}((0,5),\R^{7})$\label{tab:LinCaMo}}

\begin{center}
\begin{tabular}{c|ccc|ccc|ccc|ccc}
 & \multicolumn{3}{c|}{$N=3$} & \multicolumn{3}{c|}{$N=5$} & \multicolumn{3}{c|}{$N=10$} & \multicolumn{3}{c}{$N=20$}\tabularnewline
$n$ & G & R & L & G & R & L & G & R & L & G & R & L\tabularnewline
\hline 
5 & 5.37e-03 & 5.86e-03 & 5.55e-03 & 1.37e-05 & 1.52e-05 & 1.38e-05 & 3.41e-12 & 4.08e-12 & 3.61e-12 & 8.97e-11 & 5.31e-11 & 1.04e-10\tabularnewline
10 & 2.15e-03 & 2.33e-03 & 2.20e-03 & 1.68e-06 & 1.77e-06 & 1.69e-06 & 3.98e-11 & 2.53e-11 & 2.51e-11 & 4.78e-10 & 7.58e-10 & 1.00e-09\tabularnewline
20 & 9.95e-04 & 1.04e-03 & 1.00e-03 & 2.08e-07 & 2.14e-07 & 2.08e-07 & 2.04e-10 & 2.53e-10 & 1.80e-10 & 3.18e-09 & 2.97e-09 & 3.30e-09\tabularnewline
40 & 4.80e-04 & 4.91e-04 & 4.81e-04 & 2.58e-08 & 2.62e-08 & 2.58e-08 & 1.34e-09 & 1.67e-09 & 1.64e-09 & 2.56e-08 & 2.31e-08 & 3.29e-08\tabularnewline
80 & 2.36e-04 & 2.39e-04 & 2.36e-04 & 3.34e-09 & 3.32e-09 & 3.63e-09 & 1.19e-08 & 1.33e-08 & 1.60e-08 & 1.99e-07 & 2.03e-07 & 2.14e-07\tabularnewline
160 & 1.17e-04 & 1.17e-04 & 1.17e-04 & 9.16e-09 & 9.16e-09 & 1.18e-08 & 8.66e-08 & 1.01e-07 & 1.13e-07 & 1.74e-06 & 1.45e-06 & 1.94e-06\tabularnewline
320 & 5.81e-05 & 5.83e-05 & 5.81e-05 & 8.06e-08 & 6.79e-08 & 8.74e-08 & 7.90e-07 & 8.25e-07 & 9.82e-07 & 1.39e-05 & 1.27e-05 & 1.38e-05\tabularnewline
\end{tabular}
\end{center}
\end{sidewaystable}

\begin{sidewaystable}
\vspace{0.5\textwidth}

\caption{Error of the approximate solution using Legendre basis functions and
Gauss-Legendre (G), Radau IIA (R), and Lobatto (L) collocation nodes
when using the functional $\Phi_{\pi,M}^{C}$ in Example \ref{exa:LinCaMo}.
The norm is that of $H_{D}^{1}((0,5),\R^{7})$\label{tab:LinCaMo_C}}

\begin{center}
\begin{tabular}{c|ccc|ccc|ccc|ccc}
 & \multicolumn{3}{c|}{$N=3$} & \multicolumn{3}{c|}{$N=5$} & \multicolumn{3}{c|}{$N=10$} & \multicolumn{3}{c}{$N=20$}\tabularnewline
$n$ & G & R & L & G & R & L & G & R & L & G & R & L\tabularnewline
\hline 
5 & 5.22e-03 & 7.20e-03 & 7.81e-03 & 1.30e-05 & 1.50e-05 & 1.44e-05 & 2.89e-12 & 4.32e-12 & 1.82e-12 & 5.15e-11 & 3.67e-11 & 4.24e-11\tabularnewline
10 & 2.06e-03 & 2.85e-03 & 3.46e-03 & 1.59e-06 & 1.75e-06 & 1.76e-06 & 3.24e-11 & 1.95e-11 & 1.79e-11 & 3.23e-10 & 1.57e-10 & 1.91e-10\tabularnewline
20 & 9.49e-04 & 1.27e-03 & 1.67e-03 & 1.96e-07 & 2.11e-07 & 2.19e-07 & 2.19e-10 & 1.66e-10 & 1.06e-10 & 2.39e-09 & 1.55e-09 & 7.72e-10\tabularnewline
40 & 4.58e-04 & 6.04e-04 & 8.27e-04 & 2.42e-08 & 2.60e-08 & 2.73e-08 & 1.59e-09 & 1.01e-09 & 7.38e-10 & 1.80e-08 & 1.65e-08 & 4.39e-09\tabularnewline
80 & 2.25e-04 & 2.95e-04 & 4.12e-04 & 3.12e-09 & 3.51e-09 & 3.58e-09 & 1.16e-08 & 8.67e-09 & 5.25e-09 & 1.45e-07 & 1.41e-07 & 2.56e-08\tabularnewline
160 & 1.11e-04 & 1.46e-04 & 2.06e-04 & 9.57e-09 & 1.01e-08 & 8.23e-09 & 9.33e-08 & 7.27e-08 & 3.96e-08 & 1.10e-06 & 1.20e-06 & 1.52e-07\tabularnewline
320 & 5.54e-05 & 7.24e-05 & 1.03e-04 & 7.95e-08 & 8.73e-08 & 7.13e-08 & 7.47e-07 & 5.86e-07 & 3.33e-07 & 8.82e-06 & 9.78e-06 & 1.11e-06\tabularnewline
\end{tabular}
\end{center}
\end{sidewaystable}

\end{document}